\documentclass[a4paper,11pt,reqno]{amsart}

\usepackage{aeguill} 
\usepackage{enumerate}
\usepackage{amssymb,amsmath,latexsym,amsthm}
\usepackage[a4paper, tmargin=1.5in, bmargin=1.5in, lmargin=1in,rmargin=1in]{geometry}
\usepackage{url}
\usepackage[french, main=english]{babel}
\usepackage[utf8]{inputenc}
\usepackage{mathrsfs}
\usepackage{xcolor}
\usepackage{comment}
\definecolor{violet}{rgb}{0.0,0.2,0.7}
\definecolor{rouge2}{rgb}{0.8,0.0,0.2}
\usepackage{tikz}
\usepackage{empheq}
\usepackage{tikz-cd}
\usetikzlibrary{matrix,arrows,decorations.pathmorphing}
\usepackage{hyperref}
\usepackage{mathpazo} 
\hypersetup{
	bookmarks=true,         
	unicode=false,          
	pdftoolbar=true,        
	pdfmenubar=true,        
	pdffitwindow=false,     
	pdfstartview={FitH},    
	pdftitle={},    
	pdfauthor={},     
	colorlinks=true,       
	linkcolor=rouge2,          
	citecolor=violet,        
	filecolor=black,      
	urlcolor=cyan}           
\usepackage{enumitem}
\usepackage{appendix}

\usepackage{amssymb}
\usepackage{amsmath}
\usepackage{mathrsfs}
\usepackage{amsfonts}
\usepackage{mathtools}
\usepackage{dsfont}

\DeclareSymbolFont{script}{U}{eus}{m}{n}
\DeclareSymbolFontAlphabet{\amathscr}{script}
\DeclareMathSymbol{\Wedge}{0}{script}{"5E}
\DeclareMathAlphabet{\mathrmsl}{OT1}{cmr}{m}{sl}
\theoremstyle{plain}
\newtheorem{thm}{Theorem}[section]

\newtheorem{lem}[thm]{Lemma}
\newtheorem{prop}[thm]{Proposition}

\newtheorem{cor}[thm]{Corollary}

\theoremstyle{plain}
\newtheorem{bigthm}{Theorem}

\newtheorem{taggedbigprobx}{Problem}
    \newenvironment{taggedbigprob}[1]
    {\renewcommand\thetaggedbigprobx{#1}\taggedbigprobx}
    {\endtaggedbigprobx}
    
	\newtheorem{bigconj}[bigthm]{Conjecture}

    \renewcommand{\thebigthm}{\Alph{bigthm}} 
    \newtheorem*{bigrmk*}{Remark}

    \makeatletter
    \newcommand{\settheoremtag}[1]{
    \let\oldthebigthm\thebigthm
    \renewcommand{\thebigthm}{#1}
    \g@addto@macro\endbigconj{
    \addtocounter{bigthm}{-1}
    \global\let\thebigthm\oldthebigthm}
    }
    \makeatother

\theoremstyle{plain}
    
\theoremstyle{definition}
	\newtheorem{defn}[thm]{Definition}
	\newtheorem{eg}[thm]{Example}

	\newtheorem*{claim*}{Claim}
    
	\newtheorem*{ack*}{Acknowledgements}
\theoremstyle{remark}
	\newtheorem{rmk}[thm]{Remark}
	\newtheorem*{rmk*}{Remark}

	\newtheorem*{ans*}{Answer}

\numberwithin{equation}{section}

\newlist{steps}{enumerate}{1}
\setlist[steps, 1]{label = Step \arabic*:}

\mathcode`l="8000
\begingroup
\makeatletter
\lccode`\~=`\l
\DeclareMathSymbol{\lsb@l}{\mathalpha}{letters}{`l}
\lowercase{\gdef~{\ifnum\the\mathgroup=\m@ne \ell \else \lsb@l \fi}}%
\endgroup

\DeclareFontFamily{U}{MnSymbolC}{}
\DeclareSymbolFont{MnSyC}{U}{MnSymbolC}{m}{n}
\DeclareFontShape{U}{MnSymbolC}{m}{n}{
	<-6>  MnSymbolC5
	<6-7>  MnSymbolC6
	<7-8>  MnSymbolC7
	<8-9>  MnSymbolC8
	<9-10> MnSymbolC9
	<10-12> MnSymbolC10
	<12->   MnSymbolC12}{}
\DeclareMathSymbol{\intprod}{\mathbin}{MnSyC}{'270}

\DeclareMathOperator{\Hess}{Hess}

\DeclareMathOperator{\Vol}{Vol}

\DeclareMathOperator{\ext}{ext}

\DeclareMathOperator{\Aut}{Aut}

\DeclareMathOperator{\Scal}{Scal}

\DeclareMathOperator{\Fut}{\mathbf{Fut}}
\DeclareMathOperator{\EH}{\mathbf{EH}}
\DeclareMathOperator{\BFS}{\mathbf{S}}

\DeclareMathOperator{\BFV}{\mathbf{V}}

\def\1{\mathds{1}}

\newcommand{\Sa}{S}
\newcommand{\R}{\mathbb{R}}

\newcommand{\kt}{\hat{\mathfrak{t}}}

\newcommand{\cR}{\mathcal{R}}

\newcommand{\loc}{\mathrm{loc}}

\newcommand{\Pol}{{\rm P}}

\newcommand{\Ds}{{\amathscr D}}

\newcommand\dt{\delta}

\newcommand\vep{\varepsilon}

\newcommand\om{\omega}

\newcommand\af{\alpha}

\newcommand\ld{\lambda}

\newcommand\Dt{\Delta}

\newcommand\Sm{\Sigma}

\newcommand{\cO}{{\mathcal O}}

\newcommand\BN{\mathbb{N}}
\newcommand\BZ{\mathbb{Z}}

\newcommand\BR{\mathbb{R}}

\newcommand\BS{\mathbb{S}}

\newcommand\BP{\mathbb{P}}

\newcommand\CC{\mathcal{C}}

\newcommand\CO{\mathcal{O}}

\newcommand{\Sph}{\mathbb{S}}
\newcommand{\tor}{\mathfrak{t}}

\newcommand{\PP}{{\mathbb P}}

\newcommand\pl{\partial}

\newcommand\dd{\mathrm{d}}

\newcommand{\C}{\mathbb{C}}
\newcommand{\T}{\mathbb{T}}
\newcommand{\Z}{\mathbb{Z}}

\newcommand{\RN}[1]{\textup{\uppercase\expandafter{\romannumeral#1}}}

\title{From Calabi's extremal metrics to scalar-flat K\"ahler cones}

\author{Vestislav Apostolov}
\author{Abdellah Lahdili}
\author{Chung-Ming Pan}

\address[Vestislav Apostolov]{Centre interuniversitaire de recherches en g\'eom\'etrie et topologie (CIRGET), Universit\'e du Qu\'ebec \`a Montr\'eal, and, Institute of Mathematics and Informatics, Bulgarian Academy of Sciences} 
\email{\href{mailto:apostolov.vestislav@uqam.ca}{apostolov.vestislav@uqam.ca}}
\urladdr{\href{http://profmath.uqam.ca/~apostolo/}{http://profmath.uqam.ca/~apostolo/}}

 \address[Abdellah Lahdili]{Campus Saint-Jean, University of Alberta; 8406 91 Street, Edmonton, Alberta, Canada T6C 4G9}
 \email{\href{mailto:lahdili@ualberta.ca}{lahdili@ualberta.ca}}

\address[Chung-Ming Pan]{Centre interuniversitaire de recherches en g\'eom\'etrie et topologie (CIRGET), Universit\'e du Qu\'ebec \`a Montr\'eal; Case postale 8888, Succursale centre-ville, Montr\'eal, Qu\'ebec, H3C 3P8, Canada}
\email{\href{mailto:pan.chung_ming@uqam.ca}{pan.chung\_ming@uqam.ca}; \href{mailto:bandan770@gmail.com}{bandan770@gmail.com}}
\urladdr{\href{https://chungmingpan.github.io/}{https://chungmingpan.github.io/}}

\subjclass{53C25, 53C55, 58E11, 32Q15}
\keywords{Extremal K\"ahler metrics, Scalar-flat K\"ahler cones, Constant scalar curvature Sasaki metrics}

\begin{document} 

\maketitle

\begin{abstract} 
We prove that for any smooth polarized complex $n$-dimensional manifold $(X, L_X)$ which admits an extremal K\"ahler metric in $c_1(L_X)$, and for any integer $k$ large enough (in terms of a bound depending on $(X, L_X)$), the $(n+k+1)$-dimensional complex cone $\mathcal{Y}:= \overline{(L_X \otimes \mathcal{O}_{\mathbb{P}^k}(1))^{\times}}$ with section $X \times \mathbb{P}^k$ admits a scalar-flat K\"ahler cone metric. 
Equivalently, the unweighted Sasaki join of a smooth compact quasi-regular extremal Sasaki manifold with a regular Sasaki sphere $\mathbb{S}^{2k+1}$ of sufficiently large dimension $(2k+1)$ admits a Sasaki metric of constant (positive) scalar curvature. This gives an affirmative answer to an asymptotic version of a question raised by Boyer--Huang--Legendre--T{\o}nnesen-Friedman in \cite{BHLT0}.
\end{abstract}

\tableofcontents

\section*{Introduction} 
Let $\overline{Y} \subset \C^N$ be an affine cone such that $Y:= \overline{Y}\setminus\{0\}$ is smooth, and $\hat\T \subset {\rm GL}_N(\C)$ be a maximal compact torus preserving $\overline{Y}$. 
A $\hat\T$-invariant cone K\"ahler metric $\hat \omega=\frac{1}{4}dd^c r^2>0$ on $Y$, polarized by an element $\hat\xi \in {\rm Lie}(\T)$,  is defined by a positive smooth function $r$  on $Y$, which satisfies $-{\mathcal L}_{J\hat \xi} r = r$. The existence on $Y$ of a Ricci-flat $\hat\T$-invariant cone K\"ahler metric or, more generally, which has zero scalar curvature, is by now a well-rooted problem extensively studied both from the point of view of Sasaki geometry ~\cite{BG, MSY, BvC, CSz1, CSz2} and weighted cscK geometry~\cite{AC, ACL, ChiLi, Apostolov_Lahdili_Legendre_2024}. Scalar-flat K\"ahler cones also fit in the general theory of singular constant scalar curvature K\"ahler metrics that have been studied recently in \cite{Pan_To_Trusiani_2023, Boucksom_Jonsson_Trusiani_2024, Pan_To_2024}, where they are expected to arise as bubbling limits of singular cscK metrics,  and thus play a key role in understanding their behaviour near singularities. 


\smallskip
A prototypical example of an  K\"ahler cone arises in the case when $(X, L)$ is a smooth polarized variety and $\overline{Y}:= \overline{(L^{-1})^{\times}}$ is the cone obtained by contracting the zero section of the line bundle $L^{-1}$ to a point. 
In this case, one can take $\hat\T \subset {\rm Aut}(X, L)$ to be a maximal compact torus inside the group of automorphisms of $(X, L)$. 
If we denote by $\hat{\chi}_0 \in {\rm Lie}(\hat\T)$ the generator of $S^1$-action on the fibres of $L^{-1}$, it is well-known~\cite{BG} that $X$ admits a K\"ahler metric $\omega \in 2\pi c_1(L)$ of constant positive scalar curvature if and only if $Y$ admits a $\hat\T$-invariant cone K\"ahler metric of zero scalar curvature polarized by a positive multiple of $\hat\chi_0$. 
However, there are many known examples (see e.g. \cite{GMSW, FOW, BT0}) of smooth polarized varieties $(X, L)$ which do not admit cscK metrics but nevertheless the associated cone $(Y, \hat\T)$ admits a scalar-flat cone K\"ahler metric polarized by $\hat \xi \in {\rm Lie}(\hat\T)$ different than a multiple of $\hat \chi_0$. 
The study of scalar-flat cone K\"ahler metrics on $(Y, \hat\T)$ thus naturally (and non-trivially) extends the much studied theory of cscK metrics on polarized varieties.

\smallskip
It is well-known~\cite{CSz1,BvC,ACL}  that the subspace $\hat\tor_+(Y)\subset {\rm Lie}(\hat\T)$ of elements $\hat \xi \in {\rm Lie}(\hat\T)$ such that $Y$ admits a $\hat\xi$-polarized $\hat\T$-invariant  cone K\"ahler metrics is an open convex simplicial affine cone in ${\rm Lie}(\hat\T)$, called the \emph{Sasaki--Reeb cone} of $(Y, \hat \T)$. 
For each $\hat \xi \in \hat\tor_+(Y)$, there exists a linear map  ${\Fut}_{\hat\xi} : {\rm Lie}(\hat\T) \to {\mathbb R}$, reminiscent to the Futaki invariant in the K\"ahler context, such that ${\Fut}_{\hat\xi}\equiv 0$ is a necessary condition for the existence of a $\hat\xi$-polarized scalar-flat $\hat\T$-invariant K\"ahler metric on $Y$ (see \cite{BGS, FOW, BHL}). 
A remarkable result from \cite{BHL} states that the subspace ${\mathcal F}:=\{\hat \xi \in \hat\tor_+(Y)\, | \, {\Fut}_{\hat \xi} \equiv 0\}$ is  non-empty. 
This extends a key observation going back to \cite{MSY} in the case of Calabi--Yau cones, where it is also shown that  $\mathcal{F}$ then consists of a unique ray of polarizations. 
Thus, unlike the K\"ahler context, the existence of scalar-flat K\"ahler cone metrics on $Y$ is Futaki unobstructed for suitably chosen polarizations. 

\smallskip
Besides the vanishing of ${\Fut}_{\hat \xi}$, the results in \cite{Lahdili_2019, AC} also yield obstructions for the existence of scalar-flat cone metrics on $Y$  expressed as a Calabi--Lichnerowicz--Matsushima type constraint on the group of complex automorphisms of any orbifold quotient $X= Y/\C^*_{\hat\chi}$ of $Y$ by the $\C^*$-action generated by a \emph{quasi-regular} element $\hat\chi\in \hat\tor_+(Y)$. 
This constraint is satisfied if $X$ admits an extremal orbifold K\"ahler metric in the Chern de Rham class of the induced polarization $L$. 
The following is a ramification of a general  problem proposed in \cite{BHLT0}

\begin{taggedbigprob}{1}[{\cite{BHLT0}}]\label{main-problem} 
Suppose $(\overline{Y}, \hat \T)$ is an affine cone which admits a quasi-regular Sasaki--Reeb vector field $\hat \chi \in \hat\tor_+(Y)$ such that the orbifold $X=Y/\C^*_{\hat \chi}$ admits an extremal K\"ahler metric in $2\pi c_1(L)$. 
Does $(Y, \hat\T)$ admit a $\hat\T$-invariant scalar-flat K\"ahler metric polarized by some $\hat\xi \in \mathcal{F}\subset \hat\tor_+$?
\end{taggedbigprob}

It turns out~\cite{BHLT1} that the more subtle $K$-stability obstruction theory~\cite{CSz1,ACL} yields examples of cones $(Y, \hat\T)$ where the answer of Problem~\ref{main-problem} is ``No''.


\bigskip
Motivated by these considerations, we establish in this paper the following `asymptotic' affirmative version of Problem~\ref{main-problem}.

\begin{bigthm}\label{thm:main} In the setup of Problem~\ref{main-problem}, there exists a positive integer $k_0=k_0(X, L)$, such that for any integer $k \geq k_0$, the K\"ahler cone  $\mathcal{Y} = (\mathcal{L}^{-1})^{\times}$ associated  to $\mathcal{X}= X \times \PP^k$  polarized by the line bundle   $\mathcal{L}= \pi_X^* L \otimes \pi_{\PP^k}^* \cO_{\PP^k}(1)$  admits a scalar-flat K\"ahler cone metric.
\end{bigthm}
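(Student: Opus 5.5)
Recall first the standard dictionary, via \cite{AC, ACL, Apostolov_Lahdili_Legendre_2024}, between scalar-flat cone metrics and weighted cscK metrics. A scalar-flat Kähler cone metric on $\mathcal{Y}$ polarized by $\hat\xi\in\hat\tor_+(\mathcal{Y})$ corresponds to a $(v,w)$-weighted cscK metric on the quotient $\mathcal{X}=X\times\PP^k$ in $2\pi c_1(\mathcal{L})$. Here the weights are $v=\ell_{\xi}^{-(n+k+1)}$ and $w=\ell_\xi^{-(n+k+3)}$, where $\ell_\xi=\langle \mu,\xi\rangle + c$ is a positive affine-linear function of the moment map of $\mathcal{X}$. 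So the plan is to find such $\xi$ and a weighted cscK metric on $X\times \PP^k$.

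\textbf{Step 1: the ansatz.} Take $\xi$ tangent to the product torus $\T_X\times \T_{\PP^k}$. We look for $\xi$ whose component along $X$ is proportional to the extremal vector field $\xi_{\ext}$ of $(X,L)$, and whose component along $\PP^k$ is chosen $U(k+1)$-symmetric enough that it can be absorbed. A convenient option is $\xi=\chi_0+\epsilon\,\xi_{\ext}$, with the Fubini--Study factor unchanged. We then try a product metric $\omega=\omega_X+\omega_{FS}$. The weighted scalar curvature of a product with weight depending only on the $X$-moment map reduces to an equation on $X$ alone. The integration over $\PP^k$, equivalently the join structure, converts the weight into $\ell^{-(n+k+1)}$-type weights on $X$ with an effective dimension shift by $k$.

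\textbf{Step 2: reduce to a perturbation problem.} On $X$, the resulting equation is a weighted cscK equation $\Scal_{v_k}(\omega_X)=w_k$. The weights are $v_k=(1+\epsilon\mu_{\ext})^{-(n+k+1)}$ and $w_k$ equal to a constant times $(1+\epsilon\mu_{\ext})^{-(n+k+3)}$ plus a term coming from the $\PP^k$ factor's scalar curvature $k(k+1)$. Now set $\epsilon\sim t/k$ and let $k\to\infty$. Then $(1+t\mu/k)^{-k}\to e^{-t\mu}$, and the equation tends to a $(e^{-t\mu},\cdot)$-weighted equation, a Kähler--Ricci-soliton-type or $\mu$-cscK-type limit. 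At first order in $1/k$ its linearization in $t$ is the extremal equation $\Scal(\omega_X)-\mathrm{const}=\langle\mu,\xi_{\ext}\rangle$. The factor $k(k+1)$ from $\PP^k$ balances the large constant, and this fixes $t$ in terms of the normalization of $\xi_{\ext}$. So at $k=\infty$ (suitably rescaled) the extremal metric $\omega_X$ is an exact solution.

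\textbf{Step 3: implicit function theorem.} Apply the implicit function theorem in $\T$-invariant Hölder spaces, perturbing in both the potential and the polarization. The linearized operator at the extremal metric is the weighted Lichnerowicz operator. On $\T$-invariant functions its cokernel is the space of Killing potentials $\langle\mu,a\rangle+b$ of the maximal torus, by \cite{Lahdili_2019}-type arguments. This cokernel must be absorbed by varying $\xi$ within $\mathrm{Lie}(\T)$ and the constant. The needed transversality is the statement that the map $\hat\xi\mapsto\Fut_{\hat\xi}$ has nondegenerate differential near the limit point. In the limit this differential is the Hessian of a strictly convex functional (the weighted volume functional of \cite{MSY, BHL}), so it is invertible. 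Since the error terms are $O(1/k)$ in the relevant norms, for $k\ge k_0(X,L)$ we obtain $(\xi_k,\omega_k)$ solving the weighted cscK equation. Pulling back to the cone gives the scalar-flat cone metric.

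\textbf{Main obstacle.} The hardest step is making Step 2 uniform. We must check that after rescaling, the equation on $X\times\PP^k$ with a product ansatz really reduces with remainder $O(1/k)$ in $C^{k,\alpha}$ norms independent of $k$. We must also check that the product ansatz on $\PP^k$ is consistent, meaning the $\PP^k$ directions contribute only constants. If the product ansatz fails, I would instead allow $\xi$ to have a $\PP^k$ component. I would then exploit the $U(k+1)$-symmetry to reduce to a Calabi-type ODE on a momentum interval, solved with the large parameter $k$.
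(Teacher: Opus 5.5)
\textbf{Verdict: genuine gap.} Your overall plan matches the paper's up to the last step. The product ansatz $\om_X+\om_{FS}$, with the polarization's affine function depending only on the $X$-moment map, reduces everything to a weighted cscK equation on $X$. The product step is immediate (cf.~\eqref{eq:toSFCweight}), so no fallback ODE is needed. The weights should tend to $(1,\ell_{\mathrm{ext}})$, and the cokernel ${\rm Aff}(\mu_\om)$ of the Lichnerowicz operator is absorbed by the polarization plus one constant. (Also, $w$ should be $a\,\ell^{-(n+k+2)}$, not $\ell^{-(n+k+3)}$.)

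\textbf{The scaling in Step~2 is wrong.} This step is where the whole proof lives. Multiplying the reduced equation by $\ell^{n+k+1}$ gives
\[
\ell^{\,n+k+1}\,\Scal_{v_k}(\om_X)=a\,\ell^{-1}-2k(k+1),\qquad \ell=1+\epsilon\mu .
\]
With $\epsilon=t/k$ the left side is $O(1)$. Comparing oscillations over $X$ forces $a\epsilon=O(1)$, and then $a=2k(k+1)+O(1)$. Hence $\epsilon=O(1/k^2)$, i.e.\ $t=k\epsilon\to 0$, so at your scaling with $t\neq 0$ fixed there is no limit. If you instead freeze $t$ and pass to the $e^{-t\mu}$-weighted ($\mu$-cscK-type) equation, the extremal metric does not solve it. Indeed
\[
\Scal_{e^{-t\mu}}(\om)\,e^{t\mu}=\Scal(\om)-2t\Delta_\om\mu-t^2|d\mu|_\om^2,
\]
and the last two terms are not affine in general. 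The correct regime, in the paper's notation, is $\ell=1-\vep l/N$ with $N=n+k$, $\vep=1/\ld$ and $\ld N=2k(k+1)$. Then $\vep\approx 1/(2k)$ and $v_k\to 1$ directly. Inoue's functional only appears when $\ld$ stays bounded, which is not the $\PP^k$ case.

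\textbf{The transversality justification in Step~3 is misattributed.} The MSY volume is the functional for the Sasaki--Einstein problem, not the cscS one. The Einstein--Hilbert functional of \cite{BHL}, which does govern the cscS Futaki invariant, has no convexity in general. The nondegeneracy you need exists only after rescaling around the quasi-regular ray $\hat\chi$. Establishing it is exactly the content of Proposition~\ref{prop:EH_C2loc} and Lemma~\ref{l:conv}: $\widetilde{\EH}_{\vep,a,b}$ converges in $\CC^2_{\loc}$ to the strictly concave quadratic $\widetilde{\EH}_{0,0,0}$, maximized at $\mathring{\ell}_{\mathrm{ext}}$.

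\textbf{With the correct scaling your joint implicit-function route works, and it is more direct than the paper's.} Take $v=F_1(a,\vep l)$ and $w=(l+d)F_2(a,\vep l)$; this is the paper's $\widetilde{w}$ with $d_{\ld,a}$ replaced by an unknown constant $d$. Treat $(\vep,a)$ as continuous parameters near $(0,0)$. Since $F_k(a,t)$ is real-analytic at $a=0$ with $F_k(0,t)=e^t$, this also settles your uniformity-in-$k$ worry. Solve for $(\phi,l,d)\in(\CC^{4,\alpha})^{\T}_{\perp{\rm Aff}}\times{\rm Aff}_0\times\R$.

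At $(\vep,a)=(0,0)$ the equation is $\Scal(\om_\phi)=(l+d)(\mu_{\om_\phi})$. Its linearization at $(0,\mathring{\ell}_{\mathrm{ext}},\bar\ell_{\mathrm{ext}})$ is
\[
(\dot\phi,\dot l,\dot d)\mapsto -c\,\mathcal{D}^*\mathcal{D}\dot\phi-\dot l(\mu_\om)-\dot d,\qquad c>0.
\]
This is an isomorphism, because $\mathcal{D}^*\mathcal{D}$ is self-adjoint with kernel ${\rm Aff}(\mu_\om)$ on $\T$-invariant functions ($\T$ maximal). Now set $a=-1/(n+k)$ and $\vep=(n+k)/(2k(k+1))$, and take the product with $\om_{FS}$. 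The total weight is $(d+2k(k+1))\,\ell^{-(n+k+2)}$, which is positive for large $k$, so Corollary~\ref{c:tool} gives the scalar-flat cone.

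\textbf{Comparison with the paper.} The paper splits this into two steps. First, a finite-dimensional step: critical points of $\widetilde{\EH}_{\vep,a,b}$, together with the choice of $b$, make $\Fut$ vanish. Second, it applies Lemma~\ref{l:(v,w)-LeBrun-Simanca}. That route characterizes the polarizations variationally, tying them to \cite{BHL}-type and Inoue's \cite{Inoue_2022} functionals. The direct route, once the scaling is right, bypasses the $\CC^2_{\loc}$ analysis of the functionals altogether.
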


It is well-known~\cite{BG} that there is a bijective correspondence between smooth polarized K\"ahler cones and compact $(2n+1)$-dimensional Sasaki manifolds, such that scalar flat  K\"ahler cones correspond to Sasaki manifolds with transversal scalar curvature equal to $2n(n+1)$.  
Thus, Theorem~\ref{thm:main} above can be restated as the following assertion about Sasaki manifolds, relating to the original motivation in \cite{BHLT0}:

\begin{bigthm}\label{thm:Sasaki} Suppose $(\Sa, \hat \chi, \Ds, J)$ is a smooth quasi-regular compact $(2n+1)$-dimensional Sasaki manifold such that the transversal K\"ahler geometry is Calabi-extremal.  
Then, there exists a number  $k_0=k_0(\Sa, \hat \chi, \Ds, J)>0$ such that for any integer $k\geq k_0$, the Sasaki join $\Sa \, \boldsymbol{\star}_{1,1} \, \mathbb{S}^{2k+1}$ of $(\Sa, \hat \chi_0, \Ds_0, J_0)$ with the standard (regular) Sasaki sphere $(\Sph^{2k+1}, \hat \chi_0, \Ds_0, J_0)$ admits a Sasaki--Reeb vector field $\hat \xi_k$ and a compatible Sasaki structure of constant positive transversal scalar curvature.
\end{bigthm}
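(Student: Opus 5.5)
Theorem~\ref{thm:Sasaki} is a translation of Theorem~\ref{thm:main}, so the plan is to prove Theorem~\ref{thm:main} and then pass through the Boyer--Galicki dictionary~\cite{BG}.

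\textbf{Translation to the cone.} The quasi-regular Sasaki manifold $(\Sa,\hat\chi,\Ds,J)$ is the unit circle bundle of $L^{-1}$ over the orbifold $X=\Sa/S^1_{\hat\chi}$, and its K\"ahler cone is $(L^{-1})^\times$. The hypothesis that the transversal geometry is Calabi-extremal says exactly that $X$ carries an extremal metric in $2\pi c_1(L)$. The unweighted join $\Sa\star_{1,1}\Sph^{2k+1}$ is the circle bundle of $\mathcal{L}^{-1}$ over $\mathcal{X}=X\times\PP^k$, with $\mathcal{L}=L\boxtimes\cO(1)$. Its K\"ahler cone is therefore $\mathcal{Y}$. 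A scalar-flat cone metric polarized by $\hat\xi_k$ corresponds to a Sasaki structure with Reeb field $\hat\xi_k$ whose transversal scalar curvature is the constant $2(n+k+1)(n+k+2)$. This covers the orbifold case as well, so it remains to prove Theorem~\ref{thm:main}.

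\textbf{Reduction to a weighted problem on $\mathcal{X}$.} I will look for a polarization of the form $\hat\xi=\hat\chi_0+\xi$, where $\xi$ lies in the Lie algebra of the torus $\T_X\times\T_{\PP^k}$. By the weighted cscK correspondence of \cite{AC,ACL,Apostolov_Lahdili_Legendre_2024}, a $\hat\xi$-polarized scalar-flat cone metric is the same thing as a $(v,w)$-weighted cscK metric on $\mathcal{X}$ in $2\pi c_1(\mathcal{L})$. Here $v=\ell_\xi^{-(m+1)}$ and $w=\ell_\xi^{-(m+3)}$ with $m=n+k$, and $\ell_\xi=1+\langle\mu,\xi\rangle$ is a positive affine function of the moment map. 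I restrict to $\xi=(\xi_X,\xi_{\PP})$ invariant under the isometry group of Fubini--Study, which is possible since a maximal torus can be chosen so. I then seek a product-type solution $\omega_X+\omega_{FS}$, in which the $\PP^k$ factor is deformed only through the momentum profile. Concretely, I will use the Calabi-type (generalized) ansatz in which $\PP^k$, viewed via the $\mathrm{U}(k+1)$ symmetry, contributes a weight $(1+\langle\mu_{\PP},\xi_{\PP}\rangle)$ raised to the large power $m$. Choose $\xi_{\PP}$ to be a multiple of the Hopf direction, and note that $\xi_X$ must make $\Fut$ vanish; the set $\mathcal{F}$ is nonempty by \cite{BHL}. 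The task is then a weighted cscK equation on $X$ with weights of the form $(1+\langle\mu_X,\xi_X\rangle)^{-p}$, with exponents $p\sim k$.

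\textbf{Asymptotic analysis in $k$.} As $k\to\infty$, the Futaki-vanishing condition forces $\xi_X=O(1/k)$. Write $\xi_X=\zeta/k$. Then $(1+\langle\mu_X,\zeta\rangle/k)^{-k}\to e^{-\langle\mu_X,\zeta\rangle}$, so the limiting weighted problem is the extremal/soliton-type equation $\Scal_v=$ affine in $\mu_X$. I expect that the vanishing of the limiting Futaki invariant selects $\zeta$ equal to a multiple of the extremal vector field. This is exactly the case where the given extremal metric solves the limit equation, once the weights are normalized; I will have to verify this normalization, as the $\PP^k$ factor contributes an explicit constant via its scalar curvature $k(k+1)$. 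One then perturbs in the small parameter $\varepsilon=1/k$. The key input is the implicit function theorem for the weighted Lichnerowicz operator $\Ds_v^*\Ds$ on $\T_X$-invariant potentials. Its cokernel consists of affine functions of $\mu_X$, and it is absorbed by the free parameter $\zeta$, since the Futaki map in $\zeta$ is nondegenerate by the convexity results of \cite{BHL}, the Hessian of the volume functional being positive definite.

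\textbf{Main obstacle.} The delicate step is uniformity: the weights depend on $k$ through high powers, and the error terms $(1+x/k)^{-k}-e^{-x}=O(1/k)$ must be controlled in $C^{l,\alpha}$. I would also have to check that the contribution from $\PP^k$ genuinely decouples, which requires that the Fubini--Study factor remain Fubini--Study with the $\xi_{\PP}$ weight being an explicit affine function. Failing an exact decoupling, I would instead perturb a product metric with a Calabi ansatz on the $\PP^k$ factor, treating the whole $\mathcal{X}$ with a $k$-dependent implicit function theorem. That would demand uniform invertibility bounds, which is the hardest part.
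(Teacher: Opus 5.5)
Your reduction of Theorem~\ref{thm:Sasaki} to Theorem~\ref{thm:main} and your product ansatz agree with the paper. The decoupling you worry about is in fact exact: a $U(k+1)$-invariant polarization has no $\PP^k$-component beyond the Reeb direction. Hence $\ell$ depends on $\mu_X$ only, and $\Scal_v(\om_X+\om_{\rm FS})=\Scal_v(\om_X)+2k(k+1)\,v$, so no Calabi ansatz or $k$-uniform estimates on $X\times\PP^k$ are needed.

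The genuine gap is in your asymptotic analysis. With $m=n+k$ and $w=c\,\ell^{-(m+2)}$ (not $\ell^{-(m+3)}$), the problem on $X$ reads
\[
\Scal_{\ell^{-(m+1)}}(\om)=\ell^{-(m+1)}\bigl(c\,\ell^{-1}-2k(k+1)\bigr),
\]
and testing the Futaki invariant on constants forces $c=2k^2\bigl(1+O(1/k)\bigr)$. The Fubini--Study term is therefore not a constant that can be normalized away. Suppose $\xi_X=\zeta/k$ with $\zeta\neq0$ fixed, as you propose. Then the right-hand side contains the non-constant term $c(\ell^{-1}-1)\approx-2k\langle\zeta,\mu\rangle$, of size $O(k)$, which the $O(1)$ left-hand side cannot balance. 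Pairing with $f=\langle\zeta,\mu\rangle$ minus its $v\,\om^{[n]}$-average gives $\Fut(f)=2k\int_X v f^2\,\om^{[n]}+O(1)\neq0$. So your regime contains no Futaki-vanishing weights at all. Moreover, its would-be limit (weight $e^{-\langle\mu,\zeta\rangle}$) is a soliton-type equation which the extremal metric does not solve for $\zeta\neq0$, so there is no base point for the perturbation.

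The correct scaling is forced by $\Scal(\om_{\rm FS})=2k(k+1)$ growing quadratically while the exponent grows only linearly. It is $\ell\approx 1-\mathring{\ell}_{\rm ext}/(2k(k+1))$, i.e.\ $\xi_X=O(1/k^2)$; this is the paper's $\ell=1-l/N$ with $l\approx\mathring{\ell}_{\rm ext}/\ld$ and $\ld=2k(k+1)/N\to\infty$. In that regime the weights on $X$ tend to $(1,\ell_{\rm ext})$, and the given extremal metric is the base point. This is Inoue's $\ld\to\infty$ regime, not a fixed-$\ld$ soliton limit.

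The nondegeneracy you invoke is also not the right one. The convexity you cite from \cite{BHL} concerns a functional on the whole Sasaki--Reeb cone; it does not give invertibility of the Futaki map near the extremal limit. What is needed, and what the paper proves, is the following. In the correct scaling, the Futaki map in the normalized affine function $l$ converges in $\CC^1_{\loc}$ to the gradient of $\widetilde{\EH}_{0,0,0}$. Its Hessian
\[
-\BFV_0(0)^{-1}\int_X(\dot l_1-\overline{\dot l_1})(\dot l_2-\overline{\dot l_2})\,\om^{[n]}
\]
is negative definite, with unique maximizer $\mathring{\ell}_{\rm ext}$. The Futaki invariant on constants is then killed by a separate implicit-function choice of the constant $b$. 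Once the scaling is fixed, this finite-dimensional step could equally be done directly on the Futaki map, without the functional.

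The obstacle you single out, uniform $\CC^{j,\alpha}$ control in $k$, does not arise. Reparametrize by $(\vep,a)=(1/\ld,-1/N)$ and use the analytic extension $F_k(0,t)=e^t$. The weights then form a smooth family up to the limit point on the fixed orbifold $X$, so Lemma~\ref{l:(v,w)-LeBrun-Simanca} applies directly.

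Two minor corrections: the join has dimension $2(n+k)+1$, so the relevant transversal scalar curvature is $2(n+k)(n+k+1)$; and $\Scal(\om_{\rm FS})=2k(k+1)$ in these conventions.
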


Similar existence results, formulated for special choices of $\Sa$ and taking weighted Sasaki joins with weighted Sasaki $3$-spheres, appear in \cite{BT1}. 
We emphasize that our statement above holds for \emph{any} transversally extremal Sasaki manifold and that we take \emph{unweighted} Sasaki join with the \emph{regular} Sasaki sphere over $(\PP^k, \cO_{\PP^k}(1))$ of sufficiently large dimension $2k+1 \geq 2k_0 +1$. 
The price for such a general existence result, however, is that our proof does not provide a quantitative estimate of the value of $k_0$. On the other hand,  the arguments we develop here are sufficiently robust (see Theorem~\ref{thm:SFC} for a general statement) to also allow us construct Sasaki metrics of \emph{negative} constant transversal scalar curvature on the Sasaki join of any extremal quasi-regular Sasaki manifold with an $\Sph^1$-bundle over the product of sufficiently many copies of a Riemann surface of sufficiently large genus (see Corollary~\ref{c:negative cscS}).

\smallskip
A related result to Theorem~\ref{thm:main} above was recently obtained in \cite{Apostolov_Lahdili_Legendre_2024},  where the authors proved that if $X$ is a compact Fano orbifold which admits a K\"ahler Ricci soliton, then the canonical cone $\overline{K^{\times}_{X\times \PP^k}}$ of $X\times \PP^k$ admits a Ricci-flat K\"ahler cone metric for $k \geq k_0$.  Notice that K\"ahler Ricci solitons are self-similar solutions to the normalized K\"ahler Ricci flow, whereas extremal K\"ahler metrics are self-similar solutions of the Calabi flow. 

\smallskip
We now briefly outline the strategy of proof of Theorem~\ref{thm:main}.
Similarly to \cite{Apostolov_Lahdili_Legendre_2024}, the main idea is to reformulate the existence problem for a scalar-flat cone K\"ahler metric on $\mathcal{Y}$ as a weighted constant scalar curvature problem on ${\mathcal X} = X \times \PP^k$, using the framework developed in \cite{Lahdili_2019, AC, ACL} (see Corollary~\ref{c:tool} below for a precise statement).
By looking for weighted constant scalar curvature metrics on $\mathcal{X}$ which are products of a K\"ahler metric on $X$ with a Fubini--Study metric of constant scalar curvature $2k(k+1)$ on $\PP^k$, we further reduce the problem to finding $(v_{\lambda_k, b_k, k}, w_{\lambda_k, b_k, k})$-cscK metrics on $(X, 2\pi c_1(L))$. 
Here the sequence of weights $(v_{\lambda_k, b_k, k}, w_{\lambda_k, b_k, k})$ are defined over the momentum polytope $\Pol$ of $(X, L, \hat\T)$ and depend on the dimension $k$ of the factor $\PP^k$ of ${\mathcal X}$, a single affine-linear function $\ell_k$ over $\Pol$, and two real constants $\lambda_k, b_k$ coming from the geometric freedom of the construction. 

The main technical challenge is to ensure the vanishing of the corresponding Futaki invariant for the weight functions $(v_{\lambda_k, b_k, k}, w_{\lambda_k, b_k, k})$, while extracting an a priori asymptotic limit of the weights $(v_{\lambda_k, b_k, k}, w_{\lambda_k, b_k, k})$ as $k\to \infty$ to the extremal weights $(1, \ell_{\rm ext})$. 
To this end, we introduce a family of  (suitably normalized) Einstein--Hilbert type functionals $\widetilde{\bf EH}_{\lambda,b, k}$, defined on the space of Sasaki--Reeb polarizations of $Y$ (a finite dimensional convex affine cone), whose critical points determine the affine-linear function $\ell= \ell_{\lambda, k}$ and the constant $b= b_{\lambda, k}$ in the definition of $(v_{\lambda, b, k}, w_{\lambda, b, k})$ in such a way that the corresponding Futaki obstruction of $(v_{\lambda, b, k}, w_{\lambda, b, k})$ vanishes. 
The leading term of $\widetilde{\EH}_{\lambda,b, k}$ is related to some other functionals introduced and used in \cite{BHL, FO, Inoue_2022, Lahdili_Legendre_Scarpa_2023}. 

Our main observation (see Proposition~\ref{prop:EH_C2loc} and Lemma~\ref{l:conv} below) is that, as $k\to \infty$ and $b \to 0$, the functionals $\widetilde{\bf EH}_{\lambda, b, k}$ converge uniformly in the local $\CC^2$-topology to the entropy functional $W_{\lambda}$ introduced and studied by E. Inoue \cite{Inoue_2022} in the context of $\mu^{\lambda}_{\xi_{\lambda}}$-cscK metrics.  Furthermore, similarly to this latter work, as $(\lambda, k) \to (\infty, \infty)$, $\widetilde{\bf EH}_{\lambda, b_{\ld,k},  k}$ converges uniformly in the local $\CC^2$-norm to a concave functional whose maximizer corresponds to the extremal affine-linear function $\ell_{\rm ext}$ of $(X, L, \hat \T)$.  Appealing to the implicit function theorem, we thus produce a sequence of weight functions $(v_{\lambda_k,b_k, k}, w_{\lambda_k, b_k, k})$ with vanishing weighted Futaki invariant which converges as $k\to \infty$ to the weight functions $(1, \ell_{\rm ext})$ corresponding to the extremal K\"ahler metric in $2\pi c_1(L)$.
Finally, a LeBrun--Simanca type openness result implies that, for all sufficiently large $k$, $X$ admits a $(v_{\lambda_k,b_k, k}, w_{\lambda_k,b_k, k})$-cscK metric, and thus, ${\mathcal Y}$ carries a scalar-flat K\"ahler cone metric.

\begin{ack*}
The authors are grateful to E. Legendre for sharing with us her expertise and for answering our questions on Sasaki geometry. 
The authors thank C. Boyer, C. T{\o}nnesen-Friedman and H. Huang for their comments on the manuscript.

V. Apostolov was supported by an NSERC discovery grant and an FRQNT team grant. 
A. Lahdili was supported in part by a CIRGET postdoctoral fellowship during the preparation of this work. 
C.-M. Pan is supported by a CIRGET postdoctoral fellowship. 
\end{ack*}

\section{K\"ahler cones and Sasaki manifolds} 
In this section, we recall some by now standard material relating compact Sasaki manifolds $\Sa$ of dimension $(2n+1)$ to K\"ahler cones of complex dimension $(n+1)$. 
Basic references are \cite{BG, MSY, FOW, CSz1}. 
We shall adopt the point of view of \cite{AC,ACL}, which will allow us to recast the possibly irregular transversal K\"ahler geometry of a Sasaki manifold in terms of the K\"ahler geometry of a given regular or quasi-regular quotient.

\subsection{Sasaki structures}\label{s:sasaki} 
We consider the following general setup:  $(\Sa, \Ds_0, J_0)$ is a compact $(2n+1)$-dimensional strictly pseudo-convex CR manifold invariant under the action of a compact torus $\hat \T$ whose Lie algebra is denoted by $\hat{tor} $. 
We say that $\hat\xi \in \hat\tor$ is a \emph{Sasaki--Reeb vector field} if $\hat \xi$ is transversal to $\Ds_0$ and the corresponding contact $1$-form  $\eta_0^{\hat \xi}$, which vanishes on $\Ds$ and is equal to $1$ when evaluated at $\hat \xi$,  defines a transversal K\"ahler form $d\eta_0^{\hat \xi}$ on $(\Ds_0, J_0)$, i.e. $(d\eta_0^{\hat \xi})_{|_{\Ds_0}} >0$. 
We denote by $\kt_+(\Sa) \subset \kt$ the \emph{Sasaki--Reeb} cone of Sasaki--Reeb vector fields, and assume that $\kt_+(\Sa)$ is non-empty. 
For any $\hat \xi \in \kt_+(\Sa)$, the data $(\hat\xi, \eta_0^{\hat \xi}, \Ds_0, J_0)$ is referred to as a \emph{Sasaki structure} on $\Sa$.

\begin{eg}[(Quasi-)regular Sasaki structures]\label{ex:regular}
A basic class of examples of Sasaki manifolds is given by the so-called \emph{regular Sasaki} structures, which can be described as follows. 
Let $X$ be a smooth compact complex manifold polarized by a line bundle $L$ and let $\hat \T \subset {\rm Aut}(L, X)$ be a fixed maximal torus covering a maximal torus $\T\subset {\rm Aut}_{r}(X)$, the reduced group of complex automorphisms of $X$. 
Let $\omega_0 \in 2\pi c_1(L)$ be a $\T$-invariant K\"ahler metric on $X$. 
Consider the unitary $\Sph^1$-bundle $\Sa_{\omega_0} \subset L^{-1}$ associated with the hermitian metric $H_{\omega_0}$ on $L^{-1}$ whose curvature is $-i\omega_0$. 
The torus $\hat \T$ naturally acts on $\Sa_{\omega_0}$ preserving the induced CR structures $(\Ds_0, J_0)$. 
If $\hat\chi \in \kt$ denotes the generator of the $\Sph^1$-action along the fibres, then $\hat \chi \in \kt_+(\Sa)$. 
The corresponding $1$-form $\eta_0^{\hat \chi}$ is the unique connection $1$-form on $\Sa_{\omega_0}$ with curvature satisfying 
\[
    d\eta_0^{\hat \chi} = \pi^* \omega_0
\] 
and the distribution $\Ds_0$ coincides with the horizontal distribution of $\eta_0^{\hat \chi}$.  
The transversal K\"ahler structure in this case is simply the pull-back of $\omega_0$ to $(\Ds_0, J_0)$. 
Note that $X= \Sa_{\omega_0}/\Sph^1_{\hat \chi}$, which allows us to recover the K\"ahler structure $\omega_0$ on $X$ from the corresponding Sasaki structure $(\hat \chi, \eta_0^{\hat \chi}, \Ds_0, J_0)$ on $\Sa_{\omega_0}$. 
This correspondence also applies to periodic Sasaki--Reeb vector fields $\hat\xi \in \kt_+(\Sa)$. 
In this case, $(\Sa, \hat \xi, \Ds_0, J_0)$ is the (smooth) total space of an $\Sph^1$-orbifold bundle over the K\"ahler orbifold 
\[
    (\tilde X:=\Sa/\Sph^1_{\hat \xi}, \tilde \omega_0).
\]  
The principal $\Sph^1$-bundle gives rise to a positive holomorphic orbifold bundle $\tilde L$ on $\tilde X$ with $\tilde \omega_0 \in 2\pi c_1(\tilde L)$. 
This situation is referred to in the literature as a \emph{quasi-regular} Sasaki structure.
\end{eg}

We now introduce the natural variation spaces of Sasaki structures on $\Sa$: 
Fix $\hat \xi\in\kt_+(\Sa)$ on $(\Sa, \Ds_0, J_0)$ and let $\eta^{\hat \xi}_0$ denote the associated contact form. 
We consider the following space of functions  
\[
    \Xi_{\hat \xi,\eta_0^{\hat \xi},J^{\hat \xi}}^{\hat \T}(\Sa) := \left\{ \varphi\in \CC^\infty(\Sa)^{\hat \T} \, \, | \, \, \eta_{\varphi}^{\hat \xi}:=\eta_0^{\hat \xi}+d^c_{\hat \xi} \varphi \, \, \mathrm{satisfies} \, \, d\eta_{\varphi}^{\hat \xi} >0 \mbox{ on } \Ds_{\varphi}:=\ker (\eta_{\varphi}^{\hat\xi})\right\}. 
\]
Here $J^{\hat\xi}\in{\rm End}(T\Sa)$ denotes the extension of $J_0\in {\rm End}(\Ds_0)$ such that $J^{\hat \xi}(\hat\xi) = 0$ and 
$d^c_{\hat \xi} \varphi := -d\varphi \circ J^{\hat \xi}$. 
The positivity of $d\eta^{\hat \xi}_{\varphi}$ on $\Ds_{\varphi}$ is understood with respect to $J^{\hat \xi}_{|_{\Ds_{\varphi}}}$. 
We refer to \cite[\S 2]{ACL} for the basic properties of this space. 

\smallskip
In particular, for any $\varphi \in \Xi_{\hat \xi,\eta_0^{\hat \xi},J^{\hat \xi}}^{\hat \T}(\Sa)$, the quadruple $(\hat \xi, \eta_{\varphi}^{\hat \xi}, \Ds_{\varphi}, J^{\hat \xi}_{|_{\Ds_{\varphi}}})$ defines a Sasaki structure. 
The space $\Xi_{\hat \xi,\eta_0^{\hat \xi},J^{\hat \xi}}^{\hat \T}(\Sa)$ is called a \emph{slice of $(\hat \xi, J^{\hat \xi})$-compatible $\hat \T$-invariant Sasaki potentials}. 
Throughout this paper, we shall use the following identification established in \cite{HS, ACL}.

\begin{prop}\label{sasaki-identification} 
For any Sasaki--Reeb vector fields $\hat \chi, \hat \xi \in \kt_+(\Sa)$, there exists a natural bijection
\[
    \Theta_{\hat \chi, \hat \xi} : \Xi_{\hat \xi,\eta_0^{\hat\xi},J^{\hat \xi}}^{\hat \T}(\Sa) \cong \Xi_{\hat \chi,\eta_0^{\hat \chi},J^{\hat \chi}}^{\hat \T}(\Sa). 
\]
\end{prop}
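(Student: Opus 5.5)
The approach is to pass through the Kähler cone $Y = \Sa\times \R_{>0}$, where both slices can be compared on equal footing. On the cone, a $\hat\T$-invariant Sasaki structure with Reeb field $\hat\xi$ and contact form $\eta^{\hat\xi}_\varphi$ is the same thing as a $\hat\T$-invariant radial function $r_{\hat\xi,\varphi}>0$. This function is homogeneous of degree one under the flow of $-J\hat\xi$ (i.e. $-\mathcal{L}_{J\hat\xi} r = r$), and $\tfrac14 dd^c r^2$ is a cone Kähler metric. The complex structure on $Y$ is the fixed one determined by the CR structure $(\Ds_0,J_0)$. Concretely, one identifies $\Sa$ with the level set $\{r_{\hat\xi,0}=1\}$, and then sets $r_{\hat\xi,\varphi}=e^{\varphi}r_{\hat\xi,0}$, with $\varphi$ extended to $Y$ as a function invariant under the flow of $J\hat\xi$.

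The first step is to check that $\varphi\in \Xi^{\hat\T}_{\hat\xi,\eta_0^{\hat\xi},J^{\hat\xi}}(\Sa)$ if and only if $dd^c r^2_{\hat\xi,\varphi}>0$ on $Y$. This follows from the standard formula $\tfrac14 dd^c r^2 = \tfrac12(r\,dr\wedge\eta + r^2 d\eta)$, where $\eta=d^c\log r$ restricts on the level set to $\eta_0^{\hat\xi}+d^c_{\hat\xi}\varphi$.

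The second step is to produce, for the other Reeb field $\hat\chi$, the corresponding radial function. Given the Kähler cone metric $\hat\omega=\tfrac14dd^c r^2_{\hat\xi,\varphi}$, let $\mu_{\hat\chi}:=\hat\omega(\hat\chi,\cdot)$-Hamiltonian, normalized by homogeneity to be $\tfrac12\eta(\hat\chi)\,r^2$. Since $\hat\chi\in\kt_+(\Sa)$, the function $\eta^{\hat\xi}_\varphi(\hat\chi)$ is positive on $\Sa$; this uses the fact that the Reeb cone is characterized by positivity of $\eta(\hat\chi)$, a property independent of $\varphi$. Set $r_{\hat\chi}:=\sqrt{2\mu_{\hat\chi}}$. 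This function is $\hat\T$-invariant, homogeneous of degree one with respect to $-J\hat\chi$, and $\tfrac14 dd^c r_{\hat\chi}^2$ is positive. The positivity is the key point: one uses that the Kähler cone metric polarized by $\hat\chi$ can be written via the transversal structure as in \cite{HS, ACL}, or equivalently by the argument of \cite{MSY, CSz1} that the Hamiltonian of a Reeb-cone element is a Kähler potential.

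The third step defines $\Theta_{\hat\chi,\hat\xi}(\varphi):=\log\bigl(r_{\hat\chi}/r_{\hat\chi,0}\bigr)$ restricted to the level set $\{r_{\hat\chi,0}=1\}\cong\Sa$. This lies in the $\hat\chi$-slice by the first step applied to $\hat\chi$. The inverse is built symmetrically, exchanging $\hat\xi$ and $\hat\chi$. To see that the two constructions are mutually inverse, note that both send a cone metric on $Y$ to the same underlying Kähler form $\hat\omega$. Indeed, the potential $r_{\hat\chi}^2$ yields the same form $\hat\omega$, because $\tfrac14dd^c(2\mu_{\hat\chi}) = \hat\omega$ holds for any Reeb element by Cartan's formula and homogeneity: $\mathcal{L}_{J\hat\chi}\hat\omega = 2\hat\omega$. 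So $\Theta$ is just the relabeling of the set of $\hat\T$-invariant Kähler cone metrics on $Y$ that are compatible with both polarizations.

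The main obstacle is this identity $\tfrac14 dd^c\mu_{\hat\chi}\cdot 2=\hat\omega$, together with the positivity of $\mu_{\hat\chi}$. I expect it to require the Reeb-cone characterization \cite{MSY, HS}: the form $\hat\omega$ is itself polarized by $\hat\xi$, not $\hat\chi$, so its degree under $J\hat\chi$ must be established using $\hat\T$-invariance and the fact that the forms $\hat\omega$ are homogeneous of degree two under the whole torus complexification direction $J\hat\xi$. If this fails in general, the fallback is to restrict to the slice description in \cite{ACL}, where the bijection is obtained from the same radial-function argument.
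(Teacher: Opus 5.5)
\textbf{There is a genuine gap in your second and third steps: the map you build does not land in the $\hat\chi$-slice.}

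With the Hamiltonian normalized by $\iota_{\hat\chi}\hat\omega=-d\mu_{\hat\chi}$, one has $\tfrac14 dd^c(2\mu_{\hat\chi})=\tfrac12\mathcal{L}_{-J\hat\chi}\hat\omega$. The identity $\mathcal{L}_{-J\hat\chi}\hat\omega=2\hat\omega$ holds only for $\hat\chi=\hat\xi$. Otherwise $J(\hat\xi-\hat\chi)$ would be a holomorphic gradient field preserving $\hat\omega$, hence parallel. Then the Hamiltonian of $\hat\xi-\hat\chi$, which is homogeneous of degree two along the rays, would have vanishing Hessian and therefore vanish.

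So no Kähler cone form is compatible with two distinct polarizations. In particular, $\sqrt{2\mu_{\hat\chi}}=r_{\hat\xi,\varphi}\,\eta^{\hat\xi}_\varphi(\hat\chi)^{1/2}$ is not homogeneous of degree one under $-J\hat\chi$. This already fails on $\C^m\setminus\{0\}$ with $r^2=|z|^2$ and $\hat\chi$ the rotation with weights $w_i>0$. There $2\mu_{\hat\chi}=\sum_i w_i|z_i|^2$, while the flow of $-J\hat\chi$ rescales $z_i$ by $e^{w_i t}$. Hence $\mathcal{L}_{-J\hat\chi}(2\mu_{\hat\chi})=2\sum_i w_i^2|z_i|^2\neq 2\sum_i w_i|z_i|^2$ unless all $w_i=1$. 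At best $\sqrt{2\mu_{\hat\chi}}$ is again a $\hat\xi$-radial function. Your $\Theta$ therefore maps the $\hat\xi$-slice into itself, and the ``symmetric inverse'' argument has nothing to stand on.

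\textbf{The missing idea is that the natural bijection preserves the link, i.e.\ the CR structure $(\Ds_\varphi,J)$, not the Kähler form.} This is the construction of \cite{HS, ACL} that the paper relies on. You keep $\Ds_\varphi=\ker\eta^{\hat\xi}_\varphi$ and replace $\eta^{\hat\xi}_\varphi$ by $\eta^{\hat\xi}_\varphi/\eta^{\hat\xi}_\varphi(\hat\chi)$. By $\hat\T$-invariance this form has Reeb field $\hat\chi$, and its transversal form is $d\eta^{\hat\xi}_\varphi|_{\Ds_\varphi}/\eta^{\hat\xi}_\varphi(\hat\chi)>0$.

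On the cone this reads as follows.
\begin{enumerate}
\item Let $r_{\hat\chi}$ be the unique $\hat\T$-invariant function with $\mathcal{L}_{-J\hat\chi}r_{\hat\chi}=r_{\hat\chi}$ and $\{r_{\hat\chi}=1\}=\{r_{\hat\xi,\varphi}=1\}$. It is well defined because $d\log r_{\hat\xi,\varphi}(-J\hat\chi)=\eta^{\hat\xi}_\varphi(\hat\chi)$ is a positive function pulled back from the compact link. So $\log r_{\hat\xi,\varphi}$ increases at a rate bounded below along each $-J\hat\chi$-orbit, and every orbit crosses the hypersurface exactly once.
\item On that hypersurface, $d^c\log r_{\hat\chi}$ and $d^c\log r_{\hat\xi,\varphi}$ both vanish exactly on the maximal $J$-invariant subspace of its tangent space, and the former equals $1$ on $\hat\chi$. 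Hence $d^c\log r_{\hat\chi}$ restricts to the rescaled contact form.
\item Your first step, applied to $\hat\chi$, then gives $\tfrac14dd^c r_{\hat\chi}^2>0$, and you set $\Theta(\varphi)=\log(r_{\hat\chi}/r_{\hat\chi,0})$.
\end{enumerate}
Bijectivity is then immediate. A radial function of either polarization is determined by its unit level set, and the reverse construction returns the same hypersurface.

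Your first step and your remark that the positivity of $\eta^{\hat\xi}_\varphi(\hat\chi)$ does not depend on $\varphi$ can be kept. One small correction there: $\tfrac14dd^cr^2=r\,dr\wedge\eta+\tfrac12r^2d\eta$, which does not affect the positivity argument.
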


\begin{rmk}\label{r:sasaki-identification}
In the case of a (quasi-)regular Sasaki structure $(\Sa, \hat \chi, \eta_0^{\hat \chi}, \Ds_0, J_0)$ (see Example~\ref{ex:regular}), one has the identification
\[  
    \Xi_{\hat\chi,\eta_0^{\hat\chi},J^{\hat\chi}}^{\hat \T}(\Sa) \equiv \mathcal{H}^{\T}_{\omega_0}(X):= \left\{\varphi \in \CC^{\infty}(X)^{\T} \, \mid \, \omega_{\varphi}
    = \omega_0 + dd^c\varphi >0 \right\}.
\]
Together with Proposition~\ref{sasaki-identification}, this identification allows one to study $(\hat \xi, J^{\hat\xi})$-compatible $\hat \T$-invariant Sasaki structures on $N$ via the induced K\"ahler geometry of $(X, \T, \alpha=2\pi c_1(L))$.
\end{rmk}

\subsection{K\"ahler cones vs Sasaki structures}\label{s:Kahler-cone}
A smooth complex cone \cite{CSz1,ACL} $(Y,J, \hat\xi)$ is a non-compact $(n+1)$-dimensional complex manifold endowed with a free holomorphic  $\R^+$-action (sometimes denoted by $\R^+_{-J\hat \xi}$) generated by the flow of a real holomorphic vector field $-J\hat\xi$.  
We shall further assume that the quotient $\Sa := Y/\R_{-J\hat \xi}^+$ is a compact $(2n+1)$-dimensional manifold. 
A typical example is given by $Y= L^{\times}$, where $L$ is a holomorphic line bundle over $X$ and $L^{\times}$ denotes the total space of $L$ with its zero section removed. 
In this case, one can take $\hat \chi$ to be the generator of the natural fiberwise $\Sph^1$-action on $L$, so that $L^{\times}/\R_{-J\hat \chi}^{+}= \Sa$ is the unit circle bundle, as described in Example~\ref{ex:regular}. 

\smallskip
We shall further fix a compact torus $\hat\T \subset {\rm Aut}(Y)$ and consider free $\R^+_{-J\hat \xi}$-actions for $\hat \xi \in \kt$, where $\kt = {\rm Lie}(\hat \T)$. 
For any such $\hat\xi \in \kt$, we consider the space of functions
\[
    \mathcal{R}_{\hat \xi}^{\hat \T}(Y):= \left\{r\in \CC^\infty(Y, \R_{>0})^{\hat \T} \,|\, \mathcal{L}_{-J\hat\xi} r =r, \, \, \hat \omega_r := {\frac{1}{4}} dd^c r^2 >0 \right\}.
\]
We assume that $\cR_{\hat \xi}^{\hat \T}(Y)\neq \emptyset$ for at least one $\hat \xi \in \kt$. 
For such a $\hat \xi$, any $r\in \cR_{\hat \xi}^{\hat \T}(Y)$ gives rise to a K\"ahler cone metric $\hat \omega_r = \frac{1}{4}dd^cr^2$, i.e. satisfying $\mathcal{L}_{-J\hat \xi} \hat\omega_r = 2\hat\omega_r$ with respect to the $\R^+$-action generated by $-J\hat \xi$.  
We refer to the pair $(Y, \hat \xi)$ as a \emph{polarized K\"ahler cone} and to $\hat \xi$ as a \emph{polarization}. 
We denote by $\kt_+(Y) \subset \kt$ the affine cone of polarizations of $Y$. 

\begin{eg}[(Quasi-)regular K\"ahler cones]
An important example is given by \emph{regular K\"ahler cones}.  
In this case, one has $Y= (L^{-1})^{\times}$, where $L$ is a polarization of $X$, and $\hat \chi$ denotes the generator of the fibrewise $\Sph^1$-action on $L^{-1}$.  
Given a K\"ahler form $\om \in 2\pi c_1(L)$, let $H_\om$ be the hermitian metric on $L^{-1}$ with curvature $-i \om$. 
The associated fibre-norm $r_{\omega} = \| \cdot\|_{H_{\omega}}$ then defines a K\"ahler cone structure on $Y$ polarized by $\hat{\chi}$, cf. Example~\ref{ex:regular}. 
Furthermore, if we fix a maximal torus $\hat \T \subset {\rm Aut}_0(X,L)$, which projects to a maximal torus $\T \subset {\rm Aut}_{\rm r}(X)$ in the reduced automorphisms group of $X$, then $r_{\omega} \in {\mathcal R}^{\hat\T}_{\hat \chi}(Y)$ provided that $\omega \in 2\pi c_1(L)(X)$ is $\T$-invariant. 
The same construction extends to the quasi-regular case.
\end{eg}

Any $r_0\in \cR_{\hat \xi}^{\hat \T}(Y)$ induces a Sasaki structure on $\Sa=Y/\R^{+}_{-J\hat \xi}$ as follows: 
The torus action of $\hat \T$ descends to $\Sa$, and $\hat \xi \in \kt$ defines a vector field on $\Sa$. 
The $1$-form $\eta_0^{\hat \xi} := d^c \log r_0$ on $Y$ is $-J\hat\xi$-basic, so one can view it  as a $1$-form on $\Sa$. 
Furthermore, the distribution $\Ds_0:= {\rm ker}(\eta_0^{\hat \xi}) \subset T\Sa$ inherits a CR structure $J_0$, defined through the almost complex structure on $\langle \hat \xi, J\hat \xi \rangle^{\perp_{\omega_{r_0}}}\subset TY$. 
Thus, the data $(\hat \xi, \eta_0^{\hat \xi}, \Ds_0, J_0, \hat \T)$ is a Sasaki structure on $\Sa$, as defined in the previous subsection. 
Furthermore, for any other $r \in \cR_{\hat \xi}^{\hat \T}(Y)$, the smooth function $\varphi := \log(r/r_0)$ is $(-J\hat \xi)$-invariant and defines an element of $\Xi_{\hat \xi,\eta_0^{\hat \xi},J^{\hat \xi}}^{\hat \T}(\Sa)$. 
It follows that   
\[
    \kt_+(Y)=\kt_+(\Sa)=:\kt_+, \qquad  \cR_{\hat \xi}^{\hat \T}(Y) \cong \Xi_{\hat \xi,\eta_0^{\hat \xi},J^{\hat \xi}}^{\hat \T}(\Sa), \qquad \forall \hat \xi \in \kt_+.
\]
These identifications are discussed at length, for example, in \cite[\S 1, \S 2]{ACL}. 
In the quasi-regular case, when $(Y, J, \hat \chi)= (L^{-1})^{\times}$ for a polarization $L$ of $X$, the above discussion together with Proposition~\ref{sasaki-identification} provides the following sequence of identifications:
\begin{equation} \label{cone-identification}
    \cR_{\hat \xi}^{\hat \T}(Y) 
    \cong \Xi_{\hat \xi,\eta_0^{\hat \xi},J^{\hat \xi}}^{\hat \T}(\Sa) 
    \cong \Xi_{\hat \chi,\eta_0^{\hat \chi},J^{\hat \chi}}^{\hat \T}(\Sa) 
    \equiv \mathcal{H}^{\T}_{\omega_0}(X), 
    \qquad \forall \hat \xi \in \kt_+. 
\end{equation}

\subsection{Sasaki join}\label{subsec:Sasaki_joint} 
We recall a special case of the definition from \cite{BGO} of the \emph{Sasaki join} construction.  

\begin{defn}[The Sasaki join construction] 
Let $(\Sa_1, \hat \xi_1, \Ds_1, J_1)$ be a \emph{quasi-regular} compact smooth Sasaki manifold of dimension $(2n_1 +1)$, and 
$(\Sa_2, \hat{\xi_2}, \Ds_2, J_2)$ be a \emph{regular} compact smooth Sasaki manifold of dimension $(2n_2+1)$. 
Denote by $(X_1, \omega_1, L_1)$ the K\"ahler orbifold structure obtained on $X_1:=\Sa_1/\Sph^1_{\hat\xi_1}$, and by $(X_2, \omega_2, L_2)$ the smooth K\"ahler structure on $X_2:= \Sa_2/\Sph^1_{\hat \xi_2}$. 
The Sasaki join $\Sa_1 \boldsymbol{\star}_{1,k} \Sa_2$ (of weight $(1, k), \, k \in \Z_{>0}$) is the quasi-regular Sasaki manifold of dimension $(2(n_1+n_2)+1)$, corresponding to the total space of the principle $\Sph^1$ orbifold bundle over $X_1\times X_2$, endowed with Sasaki--Reeb vector field $\hat \xi=\frac{1}{2}(\hat \xi_1 + \frac{1}{k}\hat \xi_2)$ and connection $1$-form whose curvature is $\omega_1\oplus k\omega_2$, see Example~\ref{ex:regular}. 
\end{defn}

It is not immediately clear from the above that $\Sa$ is a smooth Sasaki manifold, but one can also identify $\Sa$ (see \cite{BGO}) with 
\[
    \Sa = (\Sa_1 \times \Sa_2)/\Sph^1_{\hat \zeta_0}, \qquad \hat \zeta_0:=-k\hat \xi_1 + \hat \xi_2,
\]
where the diagonal $\Sph^1_{\hat \zeta_0}$-action is free on the second factor, and thus $\Sa$ is smooth (see \cite[Proposition~2.3]{BGO}.
Note that $\Sa= \Sa_1\boldsymbol{\star}_{1,k} \Sa_2$ is the Sasaki manifold associated to the cone
\[ 
    Y = \left((L_1 \otimes L_2^k)^{-1}\right)^{\times},
\]
where $L_1\otimes L_2^k$ is seen as a polarization of the orbifold $X=X_1\times X_2$.
Thus, the join construction in Sasaki geometry is the analog of the product of polarized K\"ahler manifolds.

\subsection{Sasaki structures with constant transversal scalar curvature versus scalar-flat K\"ahler cones}
We recall here the well-known relationship between the scalar curvature $\Scal_{\hat \omega}$ of a cone K\"ahler metric $\hat \omega = \frac{1}{4} dd^c r^2, \, r\in \mathcal{R}^{\hat \T}_{\hat \xi}(Y)$ and the scalar curvature $\Scal_{\hat \xi}$ of the transversal K\"ahler metric on the  corresponding Sasaki manifold $(\Sa= Y/\R^+_{-J\hat \xi}, \eta_r = d^c \log r, \hat \xi)$:
\[ \Scal_{\hat \omega} = 2 r^2\left(\Scal_{\hat \xi} - 2n(n+1)\right).\]
This can be derived, for instance, from  \cite[Theorem~7.3.12, Lemma~11.1.5]{BG},  noting that there is a difference of factor $1/2$ in the definition of transversal K\"ahler metric used in that reference compared to our conventions above.

\smallskip
Notice that if $(\hat\xi, \Ds, J)$ is a Sasaki structure on $\Sa$ with transversal Scalar curvature $\Scal_{\hat \xi}$, then the Sasaki structure $(\lambda \hat \xi, \Ds, J)$ has transversal scalar curvature $\Scal_{\lambda \hat \xi} = \lambda \Scal_{\hat \xi}$. We thus have from the above discussion
\begin{lem}\label{l:cscS-cones} 
There exists a bijective correspondence between rays of Sasaki--Reeb vectors $[\hat \xi] \in \hat\tor_+(\Sa)$ such that $\Sa$ admits a Sasaki structure with  constant positive transversal scalar curvature $(\hat \xi, \Ds_{\varphi}, J^{\hat \xi})$ for $\varphi \in \Xi_{\hat \xi,\eta_0^{\hat\xi},J^{\hat \xi}}^{\hat \T}(\Sa)$, and polarizations $\hat \xi \in \hat\tor_+(Y)$ such $Y$ admits a $\hat\xi$-polarized cone K\"ahler metric  with zero scalar curvature $\omega=\frac{1}{4} dd^c r$  for some $r\in \mathcal{R}_{\hat \xi}^{\hat\T}(Y)$.
\end{lem}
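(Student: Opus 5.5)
The plan is to combine the two facts recorded just before the statement: the curvature identity
\[
\Scal_{\hat \omega} = 2 r^2\left(\Scal_{\hat \xi} - 2n(n+1)\right),
\]
and the rescaling rule $\Scal_{\lambda \hat \xi} = \lambda \Scal_{\hat \xi}$. Together with the identifications $\kt_+(Y)=\kt_+(\Sa)$ and $\cR_{\hat \xi}^{\hat \T}(Y) \cong \Xi_{\hat \xi,\eta_0^{\hat \xi},J^{\hat \xi}}^{\hat \T}(\Sa)$ from \S\ref{s:Kahler-cone}, these give the bijection directly. The argument has two directions.

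\emph{From cones to Sasaki structures.} Suppose $\hat\xi\in\kt_+(Y)$ and $r\in\cR^{\hat\T}_{\hat\xi}(Y)$ satisfy $\Scal_{\hat\omega_r}=0$. Since $r>0$, the identity forces $\Scal_{\hat\xi}\equiv 2n(n+1)$ for the induced Sasaki structure $(\hat\xi,\eta_r=d^c\log r,\Ds_r,J)$ on $\Sa=Y/\R^+_{-J\hat\xi}$. Writing $\varphi=\log(r/r_0)$ identifies this structure with an element of $\Xi^{\hat\T}_{\hat\xi,\eta_0^{\hat\xi},J^{\hat\xi}}(\Sa)$, so we obtain a Sasaki structure of constant positive transversal scalar curvature, and hence the ray $[\hat\xi]$.

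\emph{From Sasaki structures to cones.} Suppose $\varphi\in\Xi^{\hat\T}_{\hat\xi,\eta_0^{\hat\xi},J^{\hat\xi}}(\Sa)$ gives constant transversal scalar curvature $c>0$. Set $\lambda:=2n(n+1)/c$. Then the Sasaki structure with Reeb field $\lambda\hat\xi$, contact form $\lambda^{-1}\eta_\varphi^{\hat\xi}$ and the same $(\Ds_\varphi,J)$ has transversal scalar curvature exactly $2n(n+1)$. Since $\kt_+$ is a cone, $\lambda\hat\xi\in\kt_+$, and the identification $\cR^{\hat\T}_{\lambda\hat\xi}(Y)\cong \Xi^{\hat\T}_{\lambda\hat\xi}(\Sa)$ produces a radial function $r$ whose cone metric, by the identity above, is scalar-flat. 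This gives the correspondence between the ray $[\hat\xi]$ and the distinguished representative $\lambda\hat\xi$ of that ray. The representative is unique because the scaling law makes the constant $c$ proportional to $\lambda$; equivalently, a scalar-flat cone polarized by $\hat\xi$ forces $\Scal_{\hat\xi}=2n(n+1)$ exactly. So distinct polarizations carrying scalar-flat cone metrics lie on distinct rays, and the two maps are mutually inverse.

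The main point needing care is the middle step: checking that rescaling $\hat\xi\mapsto\lambda\hat\xi$, with $\eta\mapsto\lambda^{-1}\eta$, is compatible with the slice identifications. Concretely, the transversal Kähler form $d\eta$ scales by $\lambda^{-1}$, which gives $\Scal\mapsto\lambda\Scal$. On the cone side, the corresponding radial function is $r^{1/\lambda}$, which satisfies $\mathcal{L}_{-J\lambda\hat\xi}r^{1/\lambda}=r^{1/\lambda}$. Everything else is immediate from the displayed formula.
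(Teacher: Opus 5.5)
Your route is the paper's: it justifies the lemma only by the identity $\Scal_{\hat\omega}=2r^{2}(\Scal_{\hat\xi}-2n(n+1))$ and the scaling law $\Scal_{\lambda\hat\xi}=\lambda\Scal_{\hat\xi}$. (By homogeneity the factor should be $r^{-2}$, but as you note only its positivity matters.) Both of your existence directions are fine. So is your compatibility check with the slices: $J^{\lambda\hat\xi}=J^{\hat\xi}$ and $\eta_0^{\lambda\hat\xi}=\lambda^{-1}\eta_0^{\hat\xi}$, so the potential $\varphi$ becomes $\lambda^{-1}\varphi$ and the radial function $r$ becomes $r^{1/\lambda}$.

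The gap is in the uniqueness of the representative on a ray, i.e.\ the injectivity of $\hat\xi\mapsto[\hat\xi]$, which the word ``bijective'' requires. The scaling law only relates $(\hat\xi,\varphi)$ to $(\lambda\hat\xi,\lambda^{-1}\varphi)$; it says nothing about two unrelated potentials on the same ray. Suppose $\hat\xi$ and $t\hat\xi$, with $t\neq 1$, both carried scalar-flat cone metrics, coming from potentials $\varphi_1$ and $\varphi_2$. Rescaling the second gives two Sasaki structures with the same Reeb field $\hat\xi$ and constant transversal scalar curvatures $2n(n+1)$ and $2n(n+1)/t$. Nothing in your argument rules this out: ``$c$ is proportional to $\lambda$'' presupposes that $c$ is a function of the Reeb field alone.

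The missing input is the standard fact that, for fixed $\hat\xi$, the total transversal scalar curvature $\int_{\Sa}\Scal_{\hat\xi}\,\eta^{\hat\xi}_{\varphi}\wedge(d\eta^{\hat\xi}_{\varphi})^n$ and the volume $\int_{\Sa}\eta^{\hat\xi}_{\varphi}\wedge(d\eta^{\hat\xi}_{\varphi})^n$ are independent of $\varphi$. Hence any constant value of $\Scal_{\hat\xi}$ equals their ratio $\sigma(\hat\xi)$. This ratio satisfies $\sigma(t\hat\xi)=t\,\sigma(\hat\xi)$, so $\sigma(\hat\xi)=\sigma(t\hat\xi)=2n(n+1)$ forces $t=1$. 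In the paper's framework, this amounts to integrating $\Scal_v(\omega)=a\,\ell^{-(n+2)}(\mu_\omega)$ from Proposition~\ref{p:cscS-weighted} against $\omega^{[n]}$. That determines $a$ independently of $\omega\in 2\pi c_1(L)$, because $\int_X\Scal_v(\omega)\,\omega^{[n]}$ and the Duistermaat--Heckman measure do not depend on $\omega$. The paper's one-line justification leaves this implicit as well.
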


\section{Weighted cscK metrics} 
\subsection{$(v,w)$-cscK metrics}\label{subsec:vwcscK} 
In this section, $X$ will denote a compact K\"ahler manifold or orbifold, endowed with a K\"ahler class $\alpha \in H^{1,1}(X,\mathbb{R})$ and $\T\subset {\rm Aut}_{\rm r}(X)$ will be a fixed maximal torus of the reduced group of automorphisms ${\rm Aut}_{\rm r}(X)$ (see e.g. \cite{gauduchon-book})  with associated fixed momentum polytope $\Pol_{\alpha} \subset \tor^*:= {\rm Lie}(\T)^*$~\cite{Atiyah, GS, LT}.  
The theory is well-established in the case when $X$ is smooth~\cite{gauduchon-book} and readily extends to the K\"ahler orbifold case using Hodge theory on orbifolds (see e.g.~\cite{BG}). A special case is when $(X, L)$ is a polarized orbifold  (see \cite{BG} for the definition) with $\alpha = 2\pi c_1(L)$, and $\T$ is covered by a maximal torus $\hat \T \subset {\rm Aut}(X, L)$, see \cite{gauduchon-book}. 
In this situation,  we let $\Pol_{\alpha}:=\Pol_L \subset \tor^*$ be the polytope defined by the chosen lift $\hat \T$ of $\T$ to ${\rm Aut}_{\rm r}(X)$ as follows: 
for any $\hat\T$-invariant hermitian metric $h_{\omega}$ on $\pi_L: L \to X$ with curvature a $\T$-invariant K\"ahler metric $\omega \in 2\pi c_1(L)$, and any section $s\in \CC^{\infty}(X, L)$,  the expression
\[ 
    \mathcal{L}_{\hat \xi} s - \nabla^h_{\hat \xi} s =  -i\mu_{\omega}^{\xi} s, \qquad \hat \xi \in \hat \tor, \, \xi = (\pi_L)_* \hat \xi \in \tor, \, \mu_{\omega}^{\xi} \in \CC^{\infty}(X, \R),
\]
where $\nabla^{h}$ stands for the Chern connection of $(L, h)$, gives rise to an $\omega$-momentum map $\mu_{\omega} : X \to \tor^*$  for $(X, \omega, \T)$, with image $\Pol_L:= \mu_{\omega}(X)$ independent of the choice of $\omega$ and $h$ (see e.g. \cite{Lahdili_2019}).

\smallskip
Let $v, w\in \CC^{\infty}(\Pol_{\alpha}), \, v >0$ be a pair of weight functions on $\Pol_{\alpha}$.  

\begin{defn}[{\cite{Lahdili_2019}}] 
In the above setup, let $\omega \in \alpha$ be a $\T$-invariant K\"ahler metric on $(X, \T)$ with $\Pol_{\alpha}$-normalized momentum map $\mu_\omega : X \to \Pol_{\alpha}$. 
The $v$-scalar curvature $\Scal_v(\omega)$ of $\omega$ is given by
\[
    \Scal_v(\omega)
    := v(\mu_{\omega}) \Scal(\omega) + 2\Delta_{\omega} v(\mu_{\omega}) + \big\langle g_{\omega}, \mu_{\omega}^*\left(\Hess(v)\right)\big\rangle, 
\]
where $\Scal(\omega)$ is the usual scalar curvature of the Riemannian metric $g_\omega$  associated to $\omega$, $\Delta_{\omega}$ is the Laplace operator of $g_{\omega}$, and  $\langle \cdot, \cdot \rangle$  denotes the contraction between the smooth $\tor^*\otimes \tor^*$-valued function $g_{\omega}$ on $X$ (the restriction of the Riemannian metric $g_{\omega}$  to $\tor \subset \CC^{\infty}(X, TX)$) and the smooth $\tor\otimes \tor$-valued function ${\mu_{\omega}}^*\left(\Hess(v)\right)$ on $X$ (given by the pull-back by $\mu_{\omega}$ of ${\rm Hess}(v) \in \CC^{\infty}(\Pol_{\alpha}, \tor\otimes \tor)$). 
\end{defn}

In \cite{Lahdili_2019, Apostolov_Jubert_Lahdili_2023}, a comprehensive study of the geometric PDE
\begin{equation}\label{weighted-cscK} 
    \Scal_v(\omega) = w(\mu_{\omega})
\end{equation}
is provided.  
A $\T$-invariant K\"ahler metric $\omega \in \alpha$ solving \eqref{weighted-cscK} is called \emph{(v,w)-cscK} metric. 
It is shown that if $(X, \T, \alpha)$ admits a $(v,w)$-cscK metric, then the \emph{$(v, w)$-weighted Futaki invariant} $\Fut_{v,w}$ must vanish. 
The latter is viewed as a linear map from the space ${\rm Aff}(\Pol_{\alpha})\cong \tor \oplus \R$ of affine-linear functions on $\Pol_{\alpha}$ and can be introduced by any $\T$-invariant K\"ahler metric  $\omega \in \alpha$ by 
\[
    \Fut_{v,w} (\ell) : = \int_{X} \left(\Scal_{v}(\omega) - w(\mu_{\omega})\right) \ell(\mu_{\omega}) \, \omega^{[n]}, \qquad \ell(x)\in {\rm Aff}(\Pol_{\alpha}).
\]

\smallskip
The above formalism naturally includes many important classes of special K\"ahler metrics. 
We recall below the definition of an \emph{extremal} K\"ahler metric in the sense of Calabi.

\begin{defn}[Extremal K\"ahler metrics]
An extremal K\"ahler metric $\omega_{\rm ext}$ on $X$ is a K\"ahler metric whose scalar curvature $\Scal(\omega_{\rm ext})$ satisfies that $\omega_{\rm ext}^{-1}(d\Scal(\omega_{\rm ext}))$ is a (real) holomorphic vector field (equivalently, it is a Killing vector field with respect to the Riemannian metric associated to $\omega_{\rm ext}$).
\end{defn}

\begin{lem}\label{l:extremal} Given $(X,\alpha, \T, \Pol_{\alpha})$ as above, the following are equivalent:
\begin{enumerate}
\item[(i)] $X$ admits an extremal K\"ahler metric in the de Rham class $\alpha$;
\item[(ii)] $(X, \T, \alpha, \Pol_{\alpha})$ admits a $\T$-invariant  $(1, \ell_{\rm ext})$-cscK metric, where $\ell_{\rm ext}(x)= \langle \xi_{\rm ext}, x\rangle + c_{\rm ext}$ is the unique affine linear function on $\Pol_{\alpha}$ satisfying $\Fut_{1,\ell_{\rm ext}} \equiv 0$.
\end{enumerate}
\end{lem}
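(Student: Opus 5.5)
The proof rests on two standard facts. First, Calabi's theorem that an extremal metric is invariant under a maximal compact torus of ${\rm Aut}_{\rm r}(X)$. Second, the fact that for any $\T$-invariant metric the scalar curvature projects onto ${\rm Aff}(\Pol_\alpha)$ in a way that does not depend on the metric. Throughout, $\omega^{[n]} := \omega^n/n!$.

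\emph{Definition of $\ell_{\rm ext}$.} Fix a $\T$-invariant $\omega\in\alpha$. The pairing $(\ell,\ell')\mapsto \int_X \ell(\mu_\omega)\ell'(\mu_\omega)\,\omega^{[n]}$ on ${\rm Aff}(\Pol_\alpha)$ is independent of $\omega$ (Duistermaat--Heckman), and it is positive definite because $\mu_\omega(X)=\Pol_\alpha$ has nonempty interior. The linear functional $\ell\mapsto \int_X \Scal(\omega)\ell(\mu_\omega)\,\omega^{[n]}$ is also independent of $\omega$, by the usual Futaki-invariant argument (see \cite{gauduchon-book, Lahdili_2019}). Hence there is a unique $\ell_{\rm ext}$ with $\Fut_{1,\ell_{\rm ext}}\equiv 0$: it is the $L^2$-projection of $\Scal(\omega)$ onto ${\rm Aff}(\Pol_\alpha)$, computed with respect to the above pairing.

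\emph{(ii)$\Rightarrow$(i).} For $v\equiv 1$ we have $\Scal_1(\omega)=\Scal(\omega)$. So a $(1,\ell_{\rm ext})$-cscK metric satisfies $\Scal(\omega)=\langle\xi_{\rm ext},\mu_\omega\rangle+c_{\rm ext}$. Then $\omega^{-1}(d\Scal(\omega))$ is, up to sign, the vector field on $X$ induced by $\xi_{\rm ext}\in\tor$. This field is Killing and real holomorphic, so $\omega$ is extremal.

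\emph{(i)$\Rightarrow$(ii).} Let $\omega_{\rm ext}\in\alpha$ be extremal. By Calabi's structure theorem, the identity component of its isometry group is a maximal compact subgroup of ${\rm Aut}_{\rm r}(X)$. Since maximal tori are conjugate, there is $\sigma\in{\rm Aut}_{\rm r}(X)$ such that $\sigma^*\omega_{\rm ext}$ is $\T$-invariant. Because $\sigma$ lies in the identity component, it acts trivially on cohomology, so $\sigma^*\omega_{\rm ext}\in\alpha$, and it is still extremal. The extremal field $K=\omega^{-1}(d\Scal)$ is Killing and commutes with $\T$, so it lies in the centre of the Lie algebra of the isometry group. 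By maximality of $\T$, $K$ is generated by some element of $\tor$. Consequently $\Scal(\omega)=\langle\xi,\mu_\omega\rangle+c=:\ell(\mu_\omega)$ for some affine function $\ell$, where the constant $c$ absorbs the normalization of the momentum map. This makes $\omega$ a $(1,\ell)$-cscK metric, so $\Fut_{1,\ell}\equiv 0$, and uniqueness gives $\ell=\ell_{\rm ext}$.

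\emph{Main obstacle.} The delicate step is the $\T$-invariance reduction in (i)$\Rightarrow$(ii), namely applying Calabi's theorem in the orbifold setting. I would cite \cite{gauduchon-book} for the smooth case and note that the proof carries over verbatim to orbifolds via orbifold Hodge theory.
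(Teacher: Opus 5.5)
Your proposal is correct and follows essentially the same route as the paper. The paper also proves (ii)$\Rightarrow$(i) by observing that $\ell_{\rm ext}(\mu_\omega)$ is a Killing potential. It proves (i)$\Rightarrow$(ii) via Calabi's theorem, conjugacy of maximal tori, maximality of $\T$ to place the extremal field in $\tor$, and uniqueness of $\ell_{\rm ext}$ as the $L^2$-projection of $\Scal(\omega)$ onto ${\rm Aff}(\Pol_\alpha)$.

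One small point: your claim that $K$ is central in the isometry algebra does not follow from its commuting with $\T$. It is true for a different reason, namely that every isometry in the identity component preserves $\Scal$. You do not need it anyway, since $K$ commuting with $\T$ and having zeros already lets maximality of $\T$ force $K\in\tor$.
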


\begin{proof} 
The relation (ii) $\Rightarrow (i)$ is clear as $\Scal_1(\omega)=\Scal(\omega)$ and $\ell_{\rm ext}(\mu_{\omega})$ is a Killing potential for $\omega$. 
To prove (i) $\Rightarrow$ (ii), recall that by a result of Calabi, any extremal K\"ahler metric on $X$ is invariant under a maximal torus in $\Aut_{\rm r}(X)$. 
As any two such maximal tori are conjugated in $\Aut_{\rm r}(X)$, we may assume without loss of generality that $X$ admits a $\T$-invariant extremal K\"ahler metric $\omega_{\rm ext} \in \alpha$. 
The extremal Killing vector field $\xi_{\rm ext}:=-\omega_{\rm ext}^{-1}(d\Scal(\omega_{\rm ext}))$  is then $\T$-invariant and, by the maximality of $\T$, it belongs to the Lie algebra $\tor$ of $\T$.  
It then follows that $\Scal(\omega_{\rm ext}) = \ell(\mu_{\omega_{\rm ext}})$ where $\ell(x)= \langle \xi, x \rangle + c$ is an affine-linear function on $\tor$, i.e. $\omega_{\rm ext}$ is $(1, \ell)$-cscK and thus $\Fut_{1, \ell} \equiv 0$. 
However, there exists a unique $\ell=\ell_{\rm ext}$ with the latter property, given by the $L^2(X, \omega)$ projection of the scalar curvature of any $\T$-invariant K\"ahler metric  $\omega \in \alpha$ to the pullback of ${\rm Aff}(\tor^*)$ via $\mu_{\omega}$.
\end{proof}

\subsection{Sasaki structures with constant transversal scalar curvature as $(v,w)$-weighted cscK metrics} 
In this section, we return to the Sasaki setup and assume that $(\Sa, \hat \xi, \eta^{\hat \xi}\Ds, J)$ is a compact smooth Sasaki manifold with respect to a (not necessarily quasi-regular) Sasaki--Reeb vector field $\hat \xi \in \hat \tor_+(\Sa)$ and induced transversal K\"ahler structure $\omega_{\hat \xi} : = (d\eta^{\hat \xi})_{|_{\Ds}}$.

\smallskip
In the sequel, we shall fix once for all a quasi-regular Sasaki--Reeb vector field $\hat \chi \in \hat\tor_+(\Sa)$ and denote by $(X, \omega, \T)$ the K\"ahler orbifold quotient of $(\Sa, \hat\chi, \eta^{\hat \chi}, \Ds, J)$ by the $\Sph^1_{\hat\chi}$ subgroup of $\hat \T$ generated by $\hat\chi$, where $\T := \hat \T/\Sph^1_{\hat\chi}$ is the induced torus action on $X$ and $\omega$ is the induced K\"ahler structure by the transverse K\"ahler structure $\omega_{\hat \chi}= (d\eta^{\hat \chi})_{|_{\Ds}}$ on $(\Sa, \hat \chi, \Ds, J)$. 
Equivalently, one can see $X$ as a polarized orbifold, then denoted $(X, L)$,  corresponding to the quotient of the polarized K\"ahler cone $(Y, \hat \xi, \hat \omega)$  associated to $(\Sa, \hat \chi, \Ds, J)$ with $Y:= (L^{-1})^{\times}$ and $\omega \in 2\pi c_1(L)$.
It is well-known that the map
\[
    \hat \tor \ni \hat \zeta \to \nu^{\hat \zeta}:=\eta_{\hat \chi}(\hat \zeta) \in \CC^{\infty}(\Sa, \R)
\]
gives rise to a contact $\hat \T$-momentum map whose image $\hat \Sigma:=\nu (\Sa) \subset \hat \tor^*$ is a convex polyhedral cone. 
We further introduce the subspace
\[
    \Pol_{\hat \xi} := \{\ell\in \hat \Sigma \, | \,  \langle \hat \zeta, \hat \chi\rangle =1 \}
\]
A key fact in this theory is the identifications  (see e.g. \cite[Lemma~1.15]{ACL} and the references therein):
\begin{equation}\label{polytopes} 
    \Pol_{\hat \xi} \cong \Pol_L, \qquad \tor_+(\Sa) \cong \Pol_L^* := \{ \ell(x)=\langle \zeta, x\rangle +1 \in {\rm Aff}(\Pol_L) \, | \,  \ell(x) \geq 0  \, \text{on} \, \Pol_L \}. 
\end{equation}
With this understood, an observation from \cite{AC} gives the following: 

\begin{prop}\label{p:cscS-weighted} 
In the above setup, $(\Sa, \hat\xi, \Ds, J)$ has constant transversal scalar curvature (equal to $a$) if and only if the K\"ahler orbifold $(X, \omega, \T, \Pol_L)$ is $(v, w)$-cscK with
\[
    v(x):= \ell(x)^{-(n+1)},  \qquad w(x):= a \ell(x)^{-(n+2)},   
\]
where $\ell(x)=\langle \xi, x \rangle +1, \, \xi := [\hat \xi] \in \hat \tor/\langle \hat \chi \rangle$ is a positive affine-linear function on $\Pol_L$ corresponding to $\hat\xi$ via \eqref{polytopes}.
\end{prop}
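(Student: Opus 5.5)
The plan is to transport the Sasaki structure with Reeb field $\hat\xi$ to the fixed quotient $(X,\omega,\T)$ using the identification of Proposition~\ref{sasaki-identification} and Remark~\ref{r:sasaki-identification}, and then to compute the transversal scalar curvature of $\omega_{\hat\xi}$ in terms of data on $X$. The basic observation is that, for the same CR structure $(\Ds,J)$, the contact forms for $\hat\xi$ and $\hat\chi$ are related by a conformal factor. Indeed, $\eta^{\hat\xi} = \eta^{\hat\chi}/\eta^{\hat\chi}(\hat\xi)$, and $\eta^{\hat\chi}(\hat\xi) = \nu^{\hat\xi} = \ell(\mu_\omega)$ under the identification \eqref{polytopes}. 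Here $\ell$ is the positive affine function attached to $\xi=[\hat\xi]$, and we use the normalization $\langle \nu,\hat\chi\rangle=1$ on $\Pol_{\hat\xi}\cong\Pol_L$. Hence $\eta^{\hat\xi}=\ell(\mu_\omega)^{-1}\eta^{\hat\chi}$, and the transversal Kähler form becomes
\[
d\eta^{\hat\xi}\big|_{\Ds} = \ell^{-1}\,\omega - \ell^{-2}\, d\ell\wedge \eta^{\hat\chi}\big|_{\Ds}.
\]
The second term vanishes on $\Ds$ since $\eta^{\hat\chi}|_{\Ds}=0$. So on $\Ds\cong TX$ (horizontal lift) the transversal metric is $g_{\hat\xi}=\ell(\mu_\omega)^{-1}g_\omega$, with the same $J$.

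The transversal metric for $\hat\xi$, however, lives on the leaf space of $\hat\xi$, not of $\hat\chi$, so its scalar curvature is not simply that of the conformal metric $\ell^{-1}g_\omega$ on $X$. The next step is therefore to compute $\Scal_{\hat\xi}$ as a basic quantity. I would do this via the transversal Ricci form: $\rho_{\hat\xi} = -\tfrac12 dd^c_{\hat\xi}\log\det(\omega_{\hat\xi})$ relative to a $\hat\xi$-invariant transversal volume. Alternatively, and more cleanly, I would use the cone: $\Scal_{\hat\omega}=2r^2(\Scal_{\hat\xi}-2n(n+1))$, together with the fact that the cone $Y$ is the same for both Reeb fields, with radial functions related by $r_{\hat\xi}^2 = r_{\hat\chi}^2/\ell(\mu)$. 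One then computes the scalar curvature of the cone metric $\tfrac14 dd^c r_{\hat\xi}^2$ by fibering over $X$ via $\C^*_{\hat\chi}$. This is the standard computation of \cite{AC}, which expresses the scalar curvature of a Kähler metric on a total space with torus-invariant fibre in terms of a weighted scalar curvature on the base.

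Concretely, I would write the volume form $\hat\omega^{n+1}$ as $\ell^{-(n+2)}$ times a product of $\omega^n$ and a fibre form. Computing $\Scal$ as $-\Delta\log$ of the volume density, the $\ell$-dependence then produces exactly the combination
\[
\ell^{\,n+2}\Big(\ell^{-(n+1)}\Scal(\omega)+2\Delta_\omega \ell^{-(n+1)}+\langle g_\omega,\Hess(\ell^{-(n+1)})\rangle\Big)=\ell^{\,n+2}\,\Scal_v(\omega)
\]
with $v=\ell^{-(n+1)}$, up to the constant $2n(n+1)$ terms, which cancel against the cone normalization. This gives the identity $\Scal_{\hat\xi} = \ell(\mu_\omega)^{n+2}\,\Scal_{\ell^{-(n+1)}}(\omega)$, viewed on $\Sa$ via pullback. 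Therefore $\Scal_{\hat\xi}\equiv a$ if and only if $\Scal_v(\omega)=a\,\ell(\mu_\omega)^{-(n+2)}=w(\mu_\omega)$, which is the claimed equivalence.

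The main obstacle is the second step: correctly carrying out the curvature computation and verifying that the Hessian term $\langle g_\omega,\mu^*\Hess v\rangle$ and the factor $2\Delta v$ come out with exactly the exponents $-(n+1)$ and $-(n+2)$. I would reduce this to the general formula of \cite{AC} (equivalently \cite{Lahdili_2019}) for scalar curvature of semi-Kähler products and check the exponents on the model $X=\PP^n$, $\Sa=\Sph^{2n+1}$, where $\hat\xi$ ranges over weighted Reeb fields with explicitly known transversal scalar curvature.
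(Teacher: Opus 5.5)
Your first step is right: for the fixed CR structure, $\eta^{\hat\xi}=\ell(\mu_\omega)^{-1}\eta^{\hat\chi}$ and $d\eta^{\hat\xi}|_{\Ds}=\ell^{-1}\omega$. The pointwise identity you aim for, $\Scal_{\hat\xi}=\ell^{n+2}\,\Scal_{\ell^{-(n+1)}}(\omega)$, is also true. It is exactly the observation of \cite{AC} that the paper invokes without further argument, so insofar as you simply appeal to \cite{AC} you agree with the paper.

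The independent derivation you outline, however, fails at the cone step, because the relation $r_{\hat\xi}^2=r_{\hat\chi}^2/\ell(\mu)$ is false. Since $\mathcal{L}_{-J\hat\xi}\log r_{\hat\chi}=\eta^{\hat\chi}(\hat\xi)=\ell$, for any positive function $\phi$ on $X$ one gets $\mathcal{L}_{-J\hat\xi}(\phi\, r_{\hat\chi}^2)=(\mathcal{L}_{-J\xi}\phi+2\ell\phi)\,r_{\hat\chi}^2$. Degree-$2$ homogeneity would then force $\ell=1$ at every zero of $\xi$ on $X$. These zeros include the extrema of $\ell$, so $\ell\equiv 1$, i.e. $\hat\xi=\hat\chi$. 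Already for $\hat\xi=c\hat\chi$ the correct radial function is $r_{\hat\chi}^{1/c}$, not $r_{\hat\chi}/\sqrt{c}$.

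The true $r_{\hat\xi}$ is defined only implicitly, by flowing $\{r_{\hat\chi}=1\}$ along $-J\hat\xi$. The $\hat\xi$-cone metric is $\Sph^1_{\hat\chi}$-invariant but not $\C^*_{\hat\chi}$-homogeneous. So $\hat\omega^{n+1}$ does not split globally as $\ell^{-(n+2)}\omega^n\wedge(\text{fibre form})$; that factor is correct only along the hypersurface $\{r_{\hat\chi}=1\}$. Computing $\Scal$ from the log of the volume density also needs the radial behaviour. Your sentence ``the $\ell$-dependence then produces exactly the combination'' therefore states the answer rather than deriving it, and a check on $\PP^n$ can only fix constants.

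The missing idea is that the cone is unnecessary. $\eta^{\hat\xi}=\ell^{-1}\eta^{\hat\chi}$ is a conformal change of contact form on the fixed CR manifold $(\Sa,\Ds,J)$. Both Reeb fields are CR vector fields, since they lie in $\hat\tor$. Hence the Tanaka--Webster scalar curvature of each contact form is (up to a universal normalization) its transversal scalar curvature.

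Now apply the Jerison--Lee (CR Yamabe) transformation law with $\tilde\eta=u^{2/n}\eta^{\hat\chi}$ and $u=\ell^{-n/2}$. On $\hat\chi$-invariant functions, the sub-Laplacian and horizontal gradient of $\eta^{\hat\chi}$ are $\Delta_\omega$ and $|d\cdot|_\omega$ on $X$. In the paper's normalizations this gives
\[
\Scal_{\hat\xi}=\ell\,\Scal(\omega)-2(n+1)\Delta_\omega\ell-(n+1)(n+2)\frac{|d\ell|_\omega^2}{\ell}.
\]
Expand $\Scal_{F(\ell)}=F\Scal+2F'\Delta\ell-F''|d\ell|^2$ (as in the proof of Lemma~\ref{lem:Scal_lin}) with $F=\ell^{-(n+1)}$. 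The right-hand side is then exactly $\ell^{n+2}\Scal_{\ell^{-(n+1)}}(\omega)$, and your concluding division by $\ell^{n+2}>0$ finishes the proof.
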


Putting Proposition~\ref{sasaki-identification}, Lemma~\ref{l:cscS-cones} and Proposition~\ref{p:cscS-weighted} together, we obtain the following correspondence:

\begin{cor}\label{c:tool} 
Let $(Y= (L^{-1})^{\times}, \hat \chi, \hat \omega_0)$ be  quasi-regular polarized K\"ahler cone over a polarized K\"ahler orbifold $(X= Y/\C^*_{\hat \chi}, L)$, corresponding to quasi-regular Sasaki manifold $(\Sa, \hat \chi, \eta_0^{\hat \chi}, \Ds_0, J_0)$. 
Let  $\hat\T \subset \Aut(X, L)$ be a maximal torus with Lie algebra $ \hat \tor$ containing the quasi-regular polarization $\hat\chi$ and covering a maximal torus $\T \subset \Aut_{\rm r}(X)$. 
Denote by $\hat\tor_+(Y) = \hat\tor(\Sa)$ the corresponding Sasaki--Reeb cone. 
Then the following conditions are equivalent:
\begin{enumerate}
    \item[(i)] $\exists$ a (ray of)  $\hat \xi \in \hat\tor_+(\Sa)$ and $\varphi \in \Xi^{\hat\T}_{\hat\xi, \eta_0^{\hat\xi}, J^{\hat\xi}}(\Sa)$ such that $\Sa$ admits a Sasaki structure $(\hat\xi, \eta^{\hat\xi}_{\varphi}, \Ds_{\varphi}, J_{\varphi})$ of constant transversal scalar curvature equal to $a$.
    \item[(ii)] $\exists$ $\xi \in \tor:= {\rm Lie}(\T)$ with $\ell(x):= \langle \xi, x\rangle +1 >0$ on $\Pol_L$  and a constant $a$ such that $(X, \T)$ admits a $(v,w)$-cscK metric in $\alpha = 2\pi c_1(L)$ with
    \[ 
        v(x)= \ell(x)^{-(n+1)}, \qquad w(x)=a\ell(x)^{-(n+2)}.
    \]
\end{enumerate}
Furthermore, if the constant $a$ in the above equivalent conditions is positive, each of these conditions is equivalent to the statement
\begin{enumerate}
    \item[(iii)] $\exists$ $\hat \xi_0 \in \hat\tor_+(Y)$ such that $Y$ admits a scalar-flat K\"ahler cone metric polarized by $\hat \xi_0$. Furthermore, $\hat \xi_0$ belongs to the ray of Sasaki--Reeb vector fields $\xi$  satisfying $(i)$.
\end{enumerate}
\end{cor}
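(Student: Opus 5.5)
The plan is to chain the three results stated just before Corollary~\ref{c:tool}. For (i)$\Leftrightarrow$(ii), I would start from a Sasaki structure $(\hat\xi, \eta^{\hat\xi}_{\varphi}, \Ds_{\varphi}, J_{\varphi})$ with $\varphi \in \Xi^{\hat\T}_{\hat\xi,\eta_0^{\hat\xi},J^{\hat\xi}}(\Sa)$. Proposition~\ref{sasaki-identification} gives $\Theta_{\hat\chi,\hat\xi}(\varphi) \in \Xi^{\hat\T}_{\hat\chi,\eta_0^{\hat\chi},J^{\hat\chi}}(\Sa)$. Remark~\ref{r:sasaki-identification} (the chain \eqref{cone-identification}) then identifies this with a $\T$-invariant Kähler potential, hence with a metric $\omega_\varphi \in 2\pi c_1(L)$ on $X$.

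The key point is that the same underlying CR structure, with the Reeb field changed from $\hat\chi$ to $\hat\xi$, is precisely the situation of Proposition~\ref{p:cscS-weighted}, applied with the quasi-regular reference $\hat\chi$ and the structure $(\Sa,\hat\xi,\Ds_\varphi,J)$. That proposition says the transversal scalar curvature of this structure is constant, equal to $a$, if and only if $\omega_\varphi$ is $(\ell^{-(n+1)}, a\ell^{-(n+2)})$-cscK, where $\ell = \langle \xi, x\rangle + 1$ corresponds to $\hat\xi$ via \eqref{polytopes}. Positivity of $\ell$ on $\Pol_L$ is exactly the condition $\hat\xi \in \hat\tor_+$, by \eqref{polytopes}.

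Replacing $\hat\xi$ by $\lambda\hat\xi$ rescales the transversal scalar curvature by $\lambda$. It also rescales $\ell$ by a factor, after the normalization $\langle\hat\zeta,\hat\chi\rangle=1$. So the statement is naturally about rays, and I would fix the normalization $\ell(x)=\langle\xi,x\rangle+1$, which picks a representative in each ray. The converse direction runs the same chain backwards, since every map involved is a bijection.

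For (iii), assume $a>0$. Rescaling $\hat\xi$ to $\hat\xi_0 = \frac{a}{2n(n+1)}\hat\xi$ makes the transversal scalar curvature equal to $2n(n+1)$. The formula $\Scal_{\hat\omega} = 2r^2(\Scal_{\hat\xi}-2n(n+1))$ then gives a scalar-flat cone metric for $r$ corresponding to $\varphi$ under $\cR^{\hat\T}_{\hat\xi_0}(Y) \cong \Xi^{\hat\T}(\Sa)$; this is Lemma~\ref{l:cscS-cones}. Conversely, a scalar-flat cone metric polarized by $\hat\xi_0$ yields transversal scalar curvature $2n(n+1)>0$, and hence (i) on the ray of $\hat\xi_0$.

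The main obstacle is bookkeeping rather than analysis. One has to check that Proposition~\ref{p:cscS-weighted} applies to every compatible structure in the slice, not just the reference one. That is, the identification $\Theta_{\hat\chi,\hat\xi}$ must be compatible with the choice of $\Pol_L$-normalized momentum maps, so that $\ell(\mu_{\omega_\varphi})$ is indeed the function $\eta^{\hat\chi}_{\varphi}(\hat\xi)$. I would verify this by noting that $\Theta$ is defined in \cite{ACL} precisely so that the contact momentum image stays equal to $\hat\Sigma$, so \eqref{polytopes} holds uniformly in $\varphi$.
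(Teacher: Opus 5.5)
Your proposal follows the paper's route: the paper obtains the corollary simply by chaining Proposition~\ref{sasaki-identification}, Proposition~\ref{p:cscS-weighted} and Lemma~\ref{l:cscS-cones}, and your remarks about normalizing rays and about $\Theta_{\hat\chi,\hat\xi}$ respecting the normalized momentum maps make explicit what that chain leaves implicit. One arithmetic slip: since $\Scal_{\lambda\hat\xi}=\lambda\,\Scal_{\hat\xi}$, the representative with transversal scalar curvature $2n(n+1)$ is $\hat\xi_0=\frac{2n(n+1)}{a}\hat\xi$, not $\frac{a}{2n(n+1)}\hat\xi$ (which has curvature $a^2/(2n(n+1))$); for $a>0$ both lie on the same ray, so the conclusion is unaffected.
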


\subsection{A LeBrun--Simanca stability theorem}
We state below a LeBrun--Simanca type stability result  established in the smooth non-singular case in \cite{Apostolov_Lahdili_Legendre_2024} with arguments which carry over the orbifold case:

\begin{lem}[{\cite{Apostolov_Lahdili_Legendre_2024}}]\label{l:(v,w)-LeBrun-Simanca}  
Let $\{(v_t, w_t), \, \, v_t, w_t \in \CC^{\infty}(\Pol_{\alpha}), \, v_t>0,  \, \,  t\in U \subset \R^k\}$ be a finite dimensional smooth family of weight  functions on $\Pol_{\alpha}$, parametrized by a neighbourhood $U$ of $0$ and satisfying  $\Fut_{v_t, w_t}\equiv 0$ for all $t\in U$. Suppose $\omega_0$ is a $\T$-invariant K\"ahler metric in $\alpha$ such that
\[ \Scal_{v_0}(\omega_0)= w_0(\mu_{\omega_0}).\] 
Then, there exists $\epsilon >0$ and a differentiable family of smooth $\T$-invariant K\"ahler metrics $\omega_t \in \alpha$, such that for $\|t\| < \epsilon$
\[
    \Scal_{v_t}(\omega_t)=w_t(\mu_{\omega_t}),
\]
where $\mu_{\omega_t}$ is the $\omega_t$-momentum map of $\T$ normalized by $\mu_{\omega_t}(X)=\Pol_{\alpha}$.
\end{lem}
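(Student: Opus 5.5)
The plan is the standard LeBrun--Simanca deformation argument, adapted to weights: perturb $\omega_0$ by a $\T$-invariant potential and solve with the implicit function theorem, using the Futaki vanishing to kill the cokernel. Fix Hölder spaces $\CC^{k+4,\gamma}(X)^{\T}$ (orbifold Hölder spaces in the orbifold case). For $\varphi$ small, set $\omega_\varphi=\omega_0+dd^c\varphi$ with normalized momentum map $\mu_{\varphi}=\mu_{\omega_0}+d^c\varphi$, so that $\mu_\varphi(X)=\Pol_\alpha$. Consider the map
\[
\Psi(t,\varphi):=\Scal_{v_t}(\omega_\varphi)-w_t(\mu_\varphi),
\]
which is smooth from $U\times \CC^{k+4,\gamma}$ to $\CC^{k,\gamma}$.

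By \cite{Lahdili_2019}, the linearization at $(0,0)$ in $\varphi$ is
\[
-\frac{1}{v_0(\mu_{\omega_0})}\,\Ds^*_{v_0}\Ds\,\varphi + \big(\text{terms involving } d\,(\Scal_{v_0}-w_0)\big),
\]
where $\Ds^*_{v}\Ds$ is the weighted Lichnerowicz operator. The extra terms vanish at a solution. The right way to write this is $\mathbb{L}\varphi=-v_0^{-1}\Ds^*_{v_0}\Ds\varphi$, or equivalently, after multiplying by $v_0$, the operator $\Ds^*_{v_0}\Ds$, which is self-adjoint with respect to $\omega_0^{[n]}$. This operator is fourth order, elliptic and $\T$-equivariant. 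Its kernel on $\T$-invariant functions consists of $\T$-invariant Killing potentials; by maximality of $\T$ these are exactly $\ell(\mu_{\omega_0})$ for $\ell\in{\rm Aff}(\Pol_\alpha)$. By self-adjointness, its image is the $L^2(\omega_0^{[n]})$-orthogonal complement of this finite-dimensional space.

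To handle the cokernel, let $\Pi_\varphi$ be the $L^2(\omega_\varphi^{[n]})$-projection onto the space $\{\ell(\mu_\varphi)\}$, and solve the modified equation $(1-\Pi_\varphi)\Psi(t,\varphi)=0$ for $\varphi$ in the $L^2$-orthogonal complement of the affine functions. The map $(1-\Pi_\varphi)\Psi$ can be viewed (after pulling back by the identification of complements) as a map into the fixed complement. Its derivative in $\varphi$ at $(0,0)$ is an isomorphism there, so the implicit function theorem gives a differentiable family $\varphi_t$ for $\|t\|<\epsilon$.

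Then $\Psi(t,\varphi_t)=\ell_t(\mu_{\varphi_t})$ for some affine $\ell_t$. Pairing with $\ell_t(\mu_{\varphi_t})$ and using that $\Fut_{v_t,w_t}(\ell)$ is independent of the $\T$-invariant metric used to compute it gives
\[
0=\Fut_{v_t,w_t}(\ell_t)=\int_X \ell_t(\mu_{\varphi_t})^2\,\omega_{\varphi_t}^{[n]}.
\]
Hence $\ell_t=0$ and $\omega_t:=\omega_{\varphi_t}$ solves $\Scal_{v_t}(\omega_t)=w_t(\mu_{\omega_t})$. Smoothness of $\omega_t$ follows from elliptic regularity, bootstrapping in the weighted equation. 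Everything carries over to orbifolds, since only elliptic theory and Hodge theory on orbifolds are used.

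The main obstacle is the precise form of the linearization, namely showing that it equals, up to the factor $v_0^{-1}$, the self-adjoint weighted Lichnerowicz operator, and identifying its invariant kernel with the affine functions of the momentum map. I would take this computation from \cite{Lahdili_2019}: the variation of $\Scal_v$ along $dd^c\varphi$, together with the variation $d^c\varphi$ of $\mu$, contributes $\langle d w(\mu),\, d^c\varphi\text{-term}\rangle$, which cancels at a solution.
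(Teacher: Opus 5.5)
Your proposal is correct and is the standard LeBrun--Simanca implicit function theorem argument behind this lemma; the paper gives no proof of its own, citing \cite{Apostolov_Lahdili_Legendre_2024} for the smooth case and asserting, as you do, that the argument carries over to orbifolds. One cosmetic correction: at a solution, the linearization of your $\Psi(0,\cdot)$ is, up to a constant factor, $\varphi\mapsto\delta_{\omega_0}\delta_{\omega_0}\bigl(v_0(\mu_{\omega_0})\,D^-d\varphi\bigr)$, which is itself self-adjoint with respect to $\omega_0^{[n]}$, so there is no extra factor $v_0^{-1}$ (consistent with the metric-independence of $\Fut_{v,w}$, which forces its range to be $L^2(\omega_0^{[n]})$-orthogonal to the affine functions), but since $v_0>0$ your complement-and-projection argument goes through in either normalization.
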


\section{A family of Einstein--Hilbert functionals}
In this section, we shall construct Einstein--Hilbert functionals for our approximate weights. 
Before entering the Einstein--Hilbert functionals, we recall the definition of the weighted Laplacian. 
Let $(X, \af, \T)$ be as in Section~\ref{subsec:vwcscK} and let $v > 0$ be a positive smooth function defined on $\Pol_\af$. 
The weighted Laplacian is defined as follows: 

\begin{defn}[{\cite{Lahdili_2019}}]\label{defn:wLap}
Let $\om \in \af$ be a $\T$-invariant K\"ahler metric with $\Pol_{\alpha}$-normalized momentum map $\mu_{\omega}$. 
The $v$-weighted Laplacian acts on smooth functions $f \in \CC^\infty(X)$ and is defined by 
\[
    \Dt_{\om,v} (f) = \frac{1}{v(\mu_\om)} \dt_\om (v(\mu_\om) d f),
\]
where $\dt_\om$ is the formal adjoint of the Levi--Civita connection associated to $\om$. 
\end{defn}

We remark that $\Dt_{\om,v}$ is a second-order elliptic operator and it is self-adjoint with respect to the weighted volume form $v(\mu_\om) \om^{[n]}$.

\smallskip
We now present the construction of the Einstein--Hilbert functionals that play the main role in this article. 
Consider the following function
\[
    F_k(a,t) = \exp\left(\left(\frac{1-ka}{a}\right) \log (1+at)\right) = (1+at)^{\frac{1-ka}{a}}
\]
where $(a,t) \in \BR^2$, $k \in \BN$ and $0< 1+at < 2$. 
Note that $F_k$ admits an analytic extension at $a = 0$ with $F_k(0,t) = e^t$ for all $k \in \BN$. 
We then define the following functionals
\begin{defn}
Given a $\T$-invariant K\"ahler metric $\om \in \af$ with $\Pol_{\alpha}$-normalized momentum map $\mu_{\omega}$, $a \in \BR \setminus \{0\}$, and an affine-linear function $l(x)$ defined on $\mathfrak{t}^*$ and satisfying $0 < 1 + a\ell(x) < 2$ for all $x \in \Pol_{\alpha}$, we consider the following Einstein--Hilbert functional  
\begin{equation}\label{eq:EH_defn}
\begin{split}
    \EH_{\ld,a,b}(l) 
    &:= \frac{\BFS_a(l)}{\BFV_a(l)} + \ld \left(- \frac{\int_X F_0(a,l(\mu_\om)) \om^{[n]}}{a \BFV_a(l)} + \frac{1}{a}\right) + \frac{b}{\BFV_a(l)}\\
    &= \frac{1}{\BFV_a(l)} \left(\BFS_a(l) - \frac{\ld}{a} \int_X F_0(a,l) \om^{[n]}\right) + \frac{\ld}{a} + \frac{b}{\BFV_a(l)}
\end{split}
\end{equation}
where 
\[
    \BFS_a(l) 
    := \int_X  \Scal_{F_1(a,l(x))} (\om) \cdot (1+al(\mu_\om)) \om^{[n]} 
\]
and
\[
    \BFV_a(l) 
    := \left(\int_X F_1(a,l(\mu_\om)) \om^{[n]}\right)^{\frac{1}{1-a}}.
\]
\end{defn}

\begin{rmk}
The functionals $\BFS_a(l)$, $\BFV_a(l)$ and $\EH_{\ld,a}(l)$ do not depend on the K\"ahler metrics in the K\"ahler class $\af$, see e.g. \cite[Lemma 2]{Lahdili_2019}.
\end{rmk}

\begin{rmk}
The leading term in our construction \eqref{eq:EH_defn} is inspired by \cite{BHL, FO,  Lahdili_Legendre_Scarpa_2023}. 
The additional twisted terms are introduced so that the Euler--Lagrange equation of $\EH_{\ld,a,b}$ recovers the desired weighted Futaki invariant for approximate (almost) scalar-flat cone weights (cf. \eqref{eq:EH_1stvar} and \eqref{eq:Futa_defn}), and also to provide some freedom to adjust the vanishing of the weighted Futaki invariant (cf. \eqref{eq:Fut_adjust}). 
\end{rmk}

In the following,  we  shall  prove that the functionals $\EH_{\ld,a}(\bullet)$ converges to $\EH_{\ld,0}(\bullet)$ in $\CC^2_{\loc}$ as $a$ tends to $0$. 
The precise definition of $\EH_{\ld,0}(\bullet)$ is given in Lemma~\ref{lem:EH_C0conv}.

\subsection{Locally uniform convergence of the Einstein--Hilbert functional}
\begin{lem}\label{lem:EH_C0conv}
The Einstein--Hilbert functional $\EH_{\ld,a,b}(l)$ converges locally uniformly to $\EH_{\ld,0,b}(l)$ when $a \to 0$, where
\[
    \EH_{\ld,0,b}(l) := \frac{\BFS_0(l)}{\BFV_0(l)} + \ld \left(-\frac{\int_X l e^l \om^{[n]}}{\BFV_0(l)} + \log \BFV_0(l)\right) 
    + \frac{b}{\BFV_0(l)}.
\]
\end{lem}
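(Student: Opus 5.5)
We prove the convergence term by term. Fix a compact set $K$ of affine-linear functions $l$ (a compact subset of ${\rm Aff}(\Pol_\alpha)\cong\tor\oplus\BR$) and a single $\T$-invariant $\om\in\alpha$; by the remark following the definition, $\EH_{\ld,a,b}$ does not depend on this choice. For $|a|$ small, $0<1+al(\mu_\om)<2$ holds uniformly on $K$. All integrands are then analytic in $(a,t)$ on a neighbourhood of $\{0\}\times[-C,C]$, where $C$ bounds $|l|$ on $\Pol_\alpha$ over $l\in K$. Every convergence below will therefore be uniform in $l\in K$ and in $x\in \Pol_\alpha$.

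\emph{Volume term.} We have $F_1(a,t)\to e^t$ uniformly together with all derivatives, so $\int_X F_1(a,l)\om^{[n]}\to\int_X e^{l}\om^{[n]}>0$. The exponent $\frac{1}{1-a}\to 1$, hence $\BFV_a(l)\to\BFV_0(l):=\int_X e^{l}\om^{[n]}$ uniformly on $K$, and $\BFV_0$ is bounded below by a positive constant there. This gives the convergence of the term $b/\BFV_a(l)$.

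\emph{Scalar term.} The definition of $\Scal_v(\om)$ involves $v$, its first derivatives (through $\Delta_\om v(\mu_\om)$) and its Hessian, all composed with $\mu_\om$. Since $F_1(a,l(x))\to e^{l(x)}$ in $\CC^2(\Pol_\alpha)$ uniformly on $K$, and $(1+al)\to 1$, we get $\BFS_a(l)\to \BFS_0(l):=\int_X \Scal_{e^{l}}(\om)\,\om^{[n]}$.

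\emph{Singular $\lambda$-term.} This is the main obstacle, because of the $1/a$ factors. First expand $F_0(a,t)=(1+at)^{1/a}$. Using $\log(1+at)=at-\tfrac{a^2t^2}{2}+O(a^3)$, we get
\[
F_0(a,t)=e^{t}\Big(1-\tfrac{a t^2}{2}+O(a^2)\Big),
\]
and, since $F_1(a,t)=F_0(a,t)(1+at)^{-1}$,
\[
F_1(a,t)=e^t\Big(1-at-\tfrac{at^2}{2}+O(a^2)\Big).
\]
From these, write
\[
I_0:=\int_X F_0\,\om^{[n]}=V-\tfrac a2 A+O(a^2), \qquad \int_X F_1\,\om^{[n]}=V-aB-\tfrac a2A+O(a^2),
\]
with
\[
V=\int_X e^{l}\om^{[n]},\qquad A=\int_X l^2e^l\om^{[n]},\qquad B=\int_X le^l\om^{[n]}.
\]
Then
\[
\BFV_a=\exp\Big(\tfrac{1}{1-a}\log\!\int_X F_1\om^{[n]}\Big)=V\Big(1+a\log V-\tfrac aV\big(B+\tfrac A2\big)+O(a^2)\Big).
\]
Consequently
\[
\frac{1}{a}\Big(1-\frac{I_0}{\BFV_a}\Big)=\log V-\frac{B}{V}+O(a),
\]
since the $A$ contributions cancel. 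This is exactly $-\int_X le^l\om^{[n]}/\BFV_0+\log\BFV_0$.

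To make the $O(\cdot)$ remainders uniform, we will use Taylor's theorem with integral remainder for the analytic functions $a\mapsto F_i(a,t)$ and $a\mapsto \BFV_a(l)$. Their second $a$-derivatives are bounded on $[-a_0,a_0]\times[-C,C]$ and, after integration over $X$, on $[-a_0,a_0]\times K$. This yields locally uniform convergence, and summing the three terms proves the lemma. In fact, this way of writing the argument (the difference quotient at $a=0$ of a jointly smooth function) will also give the subsequent $\CC^2_{\loc}$ statement once the same bounds are applied to $l$-derivatives.
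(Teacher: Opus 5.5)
Your proposal is correct and follows the same route as the paper. Both arguments reduce to the singular $\lambda$-term and expand $F_0$, $F_1$ and $\BFV_a(l)$ to first order in $a$; the $\int_X l^2e^l\omega^{[n]}$ contributions then cancel, leaving $\log\BFV_0(l)-\int_X le^l\omega^{[n]}/\BFV_0(l)$. You also bound the Taylor remainders explicitly on $[-a_0,a_0]\times K$, which makes precise the local uniformity that the paper leaves implicit in its $O(a^2)$ notation.
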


\begin{rmk}
In Inoue's work \cite{Inoue_2022}, he has considered the following functional: 
\begin{align*}
	\log \Vol^\ld(\xi) 
    &= \frac{1}{\int_X e^{\langle \xi, \mu \rangle} \om^n} \int_X e^{\langle \xi, \mu\rangle} \left(\Scal(\om) + \Dt_\om \langle\xi,\mu\rangle -\ld \langle\xi,\mu\rangle\right) \om^n 
    + \ld \log \left(\int_X e^{\langle \xi, \mu \rangle} \om^n \right)\\
    &= \frac{1}{\int_X e^{\langle \xi, \mu \rangle} \om^n} \int_X \Scal_{e^{\langle\xi,\bullet\rangle}}(\om) + \ld \left(\frac{- \int_X \langle\xi,\mu\rangle e^{\langle\xi,\mu\rangle} \om^n}{\int_X e^{\langle \xi, \mu \rangle} \om^n} + \log \left(\int_X e^{\langle\xi,\mu \rangle}\om^n\right)\right).
\end{align*}
For a general weight $v$, we introduce the following generalization 
\begin{equation}\label{eq:Inoue_func_general}
	\log \Vol^\ld(v) 
    = \frac{\int_X \Scal_v(\om) \om^n}{\int_{\mathrm{P}_X} v \, \dd \mu_{\mathrm{DH}}}
	+ \ld \left(\frac{- \int_{\mathrm{P}_X} \log v \cdot v \, \dd \mu_{\mathrm{DH}}}{\int_{\mathrm{P}_X} v \, \dd \mu_{\mathrm{DH}}}
    + \log \left(\int_{\mathrm{P}_X} v \, \dd \mu_{\mathrm{DH}}\right)\right),
\end{equation}
where $\mu_{\mathrm{DH}} = (\mu_\om)_\ast \om^{[n]}$ is the Duistermaat--Heckman measure.
The functional $\EH_{\ld,0,0}(\bullet)$ in Lemma~\ref{lem:EH_C0conv} coincides to the above log volume functional $\log \Vol^\ld(e^{\bullet})$.
\end{rmk}

\begin{proof}
For simplicity, we denote by $l = l(\mu_\om)$. 
It is not hard to see that  
\[
    \BFS_a(l) \xrightarrow[a \to 0]{} \int_X \Scal_{e^l}(\om) \om^{[n]}
    \quad\text{and}\quad
    \BFV_a(l) \xrightarrow[a \to 0]{}
    \BFV_0(l) = \int_X e^l \om^{[n]};
\]
therefore, it suffices to verify 
\begin{equation}\label{eq:EH_conv_1}
    \frac{1}{a \BFV_a(l)} \left(\BFV_a(l) - \int_X F_0(a,l) \om^{[n]}\right) 
    \xrightarrow[a \to 0]{} \frac{1}{\BFV_0(l)} \left(-\int_X l e^l \om^{[n]} + \BFV_0(l) \log \BFV_0(l)\right)
\end{equation}
To do so, we shall compute the first-order Taylor expansion. 
Since $\log (1+al) = al - \frac{a^2 l^2}{2} + O(a^3)$, we have 
\[
    \frac{1-ka}{a} \log (1+al) = l + a \left(-\frac{l^2}{2} - kl\right) + O(a^3)
\]
and thus 
\begin{equation}\label{eq:Fk_exp}
    F_k(a,l) = e^l \left(1 + a \left(- l^2/2 - kl\right) + O(a^2)\right). 
\end{equation}
From \eqref{eq:Fk_exp}, we have 
\begin{equation}\label{eq:Fk_int_exp}
\begin{split}
    \int_X F_k(a,l) \om^{[n]} 
    &= \int_X e^l (1+a(-l^2/2 - kl) + O(a^2)) \om^{[n]} \\
    &= \int_X e^l \om^{[n]} + a \int_X e^l(-l^2/2 - kl) \om^{[n]} + O(a^2)
\end{split}
\end{equation}
and therefore
\begin{align*}
    \log \left(\int_X F_1(a,l) \om^{[n]}\right) 
    &= \log \BFV_0(l) + \log \left(1 + a \frac{\int_X e^l(-l^2/2 - l) \om^{[n]}}{\BFV_0(l)} + O(a^2)\right)\\
    &= \log \BFV_0(l) + a \frac{\int_X e^l(-l^2/2 - l)}{\BFV_0(l)}+ O(a^2).
\end{align*}
Thus, one can derive that 
\begin{align*}
    \frac{1}{1-a} \log \left(\int_X F_1(a,l) \om^{[n]}\right) 
    &= (1+a+O(a^2)) \left(\log \BFV_0(l) + a \frac{\int_X e^l (-l^2/2 - l) \om^{[n]}}{\BFV_0(l)} + O(a^2)\right)\\
    &= \log \BFV_0(l) + a \left(\log \BFV_0(l) + \frac{\int_X e^l(-l^2/2 - l) \om^{[n]}}{\BFV_0(l)}\right) + O(a^2)
\end{align*}
and 
\begin{equation}\label{eq:Va_exp}
\begin{split}
    \BFV_a(l) 
    &= \exp \left(\frac{1}{1-a} \log \left(\int_X F_1(a,l) \om^{[n]}\right) \right) \\
    &= \exp \left(\log \BFV_0(l) + a \left(\log \BFV_0(l) + \frac{\int_X e^l(-l^2/2 - l) \om^{[n]}}{\BFV_0(l)}\right) + O(a^2)\right)\\
    &= \BFV_0(l) \exp \left(a \left(\log \BFV_0(l) + \frac{\int_X e^l (-l^2/2 - l) \om^{[n]}}{\BFV_0(l)}\right) + O(a^2)\right)\\
    &= \BFV_0(l) + a \left( \BFV_0(l) \log \BFV_0(l) + \int_X e^l(-l^2/2 - l) \om^{[n]}\right) + O(a^2).
\end{split}
\end{equation}
Combining \eqref{eq:Fk_int_exp} and \eqref{eq:Va_exp}, we infer that  
\begin{align*}
    \frac{1}{a} \left(\BFV_a(l) - \int_X F_0(a,l) \om^{[n]}\right)  
    &= \frac{1}{a} \bigg[\BFV_0(l) + a \left( \BFV_0(l) \log \BFV_0(l) + \int_X e^l(-l^2/2 - l) \om^{[n]}\right)\\
    &\qquad\qquad- \BFV_0(l) - a \int_X e^l(-l^2/2) \om^{[n]}\bigg] + O(a)\\
    &= - \int_X l e^l \om^{[n]} + \BFV_0(l) \log \BFV_0(l) + O(a).
\end{align*}
This shows \eqref{eq:EH_conv_1}, and all in all, we obtain that the following locally uniform convergence of the Einstein--Hilbert functional
\[
    \EH_{\ld,a,b}(l) 
    \xrightarrow[a \to 0]{}
    \frac{\int_X \Scal_{e^l}(\om) \om^{[n]}}{\BFV_0(l)} + \ld \left(-\frac{\int_X l e^l \om^{[n]}}{\BFV_0(l)} + \log \BFV_0(l)\right) + \frac{b}{\BFV_0(l)}.
\]
This completes the proof.
\end{proof}

\subsection{Variation of $\EH_{\ld,a}$}
We shall now calculate the first-order variation of $\EH_{\ld,a}$. 
Note that 
\[
    (\pl_t^j F_k)(a,t) = \left(\prod_{i=1}^j (1-(i+k-1)a) \right) \cdot (1+at)^{\frac{1-(j+k)a}{a}}.
\]
We now calculate the linearization of $l \mapsto \Scal_{F_1(a,l(\bullet))}(\om)$. 

\begin{lem}\label{lem:Scal_lin}
The linearized operator of $l \mapsto \Scal_{F_1(a,l(\bullet))}(\om)$ has the following form
\[
    L_{\ld,a,l}(\dot{l}) 
    = (1-a) \left(\Scal_{F_1(a,l)}(\om) \cdot (1+al)^{-1} \dot{l} + 2 F_1(a,l) \Dt_{\om,F_0(a,l)} \left((1+al)^{-1} \dot{l}\right)\right) 
\]
and as $a \to 0$, $L_{\ld,a,l}(\dot{l})$ converges locally uniformly in $l$ towards
\[
    L_{\ld,0,l}(\dot{l})
    = \Scal_{e^l}(\om) \dot{l} + 2 e^l \Dt_{\om,e^l} (\dot{l}).
\]
Here $\Dt_{\om,F_0(a,l)}$ and $\Dt_{\om,e^{l}}$ are weighted Laplacians given in Definition~\ref{defn:wLap}.
\end{lem}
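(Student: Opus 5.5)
The plan is to use that $v\mapsto \Scal_v(\om)$ is \emph{linear} in the weight $v$, and then do a product-rule computation in the momentum coordinates. Write $u:=1+al$; all functions of $\mu_\om$ are abbreviated as in the proof of Lemma~\ref{lem:EH_C0conv}. By the formula for $\pl_t^j F_k$ with $j=1$, $k=1$,
\[
\pl_t F_1(a,l)=(1-a)\,u^{\frac{1-2a}{a}} = (1-a)\,F_1(a,l)\,u^{-1},
\]
so the variation of the weight in the direction of an affine-linear $\dot l$ is $\dot v = F_1(a,l)\,h$ with $h:=(1-a)u^{-1}\dot l$. Linearity of $\Scal_v$ in $v$ then gives $L_{\ld,a,l}(\dot l)=\Scal_{F_1 h}(\om)$. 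It remains to rewrite $\Scal_{vh}$ for $v=F_1$.

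\textbf{Step 1: product rule.} For functions $v,h$ on $\Pol_\af$, I expand each term of $\Scal_{vh}(\om)$:
\[
\Dt_\om(vh)=h\Dt_\om v+v\Dt_\om h-2g_\om(dv,dh),
\]
using the convention $\Dt_\om=\dt_\om d\ge 0$, and
\[
\langle g_\om,\mu_\om^*\Hess(vh)\rangle = h\langle g_\om,\mu_\om^*\Hess v\rangle+v\langle g_\om,\mu_\om^*\Hess h\rangle+2\langle g_\om,\mu_\om^*(dv\otimes dh)\rangle .
\]
Since $d(v(\mu_\om))=\sum_i v_{,i}\,d\mu^{\xi_i}_\om$ and $g_\om(d\mu^{\xi_i}_\om,d\mu^{\xi_j}_\om)=g_\om(\xi_i,\xi_j)$, the last contraction equals $g_\om(dv,dh)$. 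Combining,
\[
\Scal_{vh}=h\,\Scal_v+2v\Dt_\om h+v\langle g_\om,\mu_\om^*\Hess h\rangle-2g_\om(dv,dh).
\]

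\textbf{Step 2: algebraic identity, which is the crux.} On the other hand, $2F_1\Dt_{\om,F_0}h=2F_1\Dt_\om h-2F_1\, g_\om(d\log F_0,dh)$ by Definition~\ref{defn:wLap}. Since $F_1=F_0u^{-1}$, we have $d\log F_1=d\log F_0-d\log u$. Comparing with Step 1 for $v=F_1$, the claimed formula reduces to the pointwise identity on $\Pol_\af$
\[
\Hess(h)=-\,d\log u\otimes dh-dh\otimes d\log u,
\]
after symmetrization against $g_\om$. I will verify this directly for $h=c\,\dot l/u$ with $\dot l$, $u$ affine. On one side,
\[
\Hess(\dot l/u)=-\tfrac{a}{u^2}(d\dot l\otimes dl+dl\otimes d\dot l)+\tfrac{2a^2\dot l}{u^3}dl\otimes dl.
\]
On the other, $d\log u=a\,dl/u$ and $dh=c(d\dot l/u-a\dot l\,dl/u^2)$, which give exactly the same symmetric tensor. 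The identity is special to $h\propto \dot l/u$, which is why the weight $F_0$ rather than $F_1$ appears. Writing $h\,\Scal_{F_1}=(1-a)\Scal_{F_1}u^{-1}\dot l$ then yields the stated expression.

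\textbf{Step 3: the limit $a\to0$.} By the analytic extension of $F_k$ at $a=0$, the functions $F_0(a,l)$, $F_1(a,l)$ and $u^{-1}$, with all their derivatives on the compact polytope, converge to $e^l$, $e^l$ and $1$ uniformly for $l$ in compact sets of affine functions. The coefficients of the second-order operator $L_{\ld,a,l}$ therefore converge, and so $L_{\ld,a,l}(\dot l)\to \Scal_{e^l}(\om)\dot l+2e^l\Dt_{\om,e^l}\dot l$ locally uniformly.

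The only non-routine point is Step 2, namely spotting that the Hessian term of the weight cancels against the change from $d\log F_1$ to $d\log F_0$.
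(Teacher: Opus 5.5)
Your proof is correct, and it is organized differently from the paper's. The paper expands $\Scal_{F_1(a,l)}(\om)=F_1\Scal(\om)+2(\pl_tF_1)\Dt_\om l-(\pl_t^2F_1)|dl|^2_\om$ with explicit powers of $1+al$ and differentiates term by term. It then regroups the result by rewriting $\dt_\om\bigl((1+al)^{\frac{1-2a}{a}}d\dot l\bigr)$, using $\dot l=(1+al)\cdot(1+al)^{-1}\dot l$, so that the stray terms $\dot l\,\dt_\om\bigl((1+al)^{\frac{1-2a}{a}}dl\bigr)$ cancel.

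You instead use linearity of $v\mapsto\Scal_v(\om)$ to reduce to $\Scal_{F_1h}(\om)$ and prove a Leibniz rule for $\Scal_{vh}$. You then isolate the one fact that produces the weight $F_0=uF_1$: since $u$ and $uh$ are affine, $0=\Hess(uh)=u\Hess h+du\otimes dh+dh\otimes du$. This is exactly your Step~2 identity, so your direct verification could be replaced by this one line.

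Your route buys transparency and generality. It shows that $\Scal_{vh}(\om)=h\,\Scal_v(\om)+2v\,\Dt_{\om,uv}h$ for any weight $v$ and any $h$ with $u>0$ and $uh$ affine. It uses nothing about $F_1$ beyond $\pl_tF_1=(1-a)F_1u^{-1}$. The paper's route needs no structural observation, but pays for it with several lines of exponent bookkeeping. Your Step~3 limit argument is at least as detailed as the paper's, which only asserts the convergence.
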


\begin{proof}
Again, for simplicity, we shall denote by $l = l(\mu_\om)$ and $\dot{l} = \dot{l}(\mu_\om)$. 
Recall that the weighted scalar curvature has the following form 
\[
    \Scal_v(\om) 
    = v(\mu_\om) \Scal(\om) 
    + 2 \Dt_\om v(\mu_\om)
    + \langle g_\om, \mu_\om^\ast \Hess(v)\rangle;
\]
therefore, we have
\begin{align*}
    \Scal_{F_1(a,l(\mu_\om))}(\om) 
    &= F_1(a,l) \Scal(\om) + 2 \Dt_\om (F_1(a, l)) + \langle g_\om, \mu_\om^\ast \Hess(F_1(a,l)) \rangle \\
    &= F_1(a,l) \Scal(\om) + 2 (\pl_t F_1)(a,l) \Dt_\om l - (\pl_t^2 F_1)(a,l) |d l|_\om^2\\
    &= (1+al)^{\frac{1-a}{a}} \Scal(\om) + 2 (1-a) (1+al)^{\frac{1-2a}{a}} \Dt_\om l\\ 
    &\qquad- (1-a)(1-2a) (1+al)^{\frac{1-3a}{a}} |dl|_\om. 
\end{align*}
The linearized operator can then be computed as follows
\begingroup\allowdisplaybreaks
\begin{align*}
    L_{\ld,a,l}(\dot{l}) 
    &= (1-a) (1+al)^{\frac{1-2a}{a}} \dot{l} \Scal(\om)
    + 2 (1-a)(1-2a) (1+al)^{\frac{1-3a}{a}} \dot{l} \Dt_\om l \\
    &\quad + 2 (1-a) (1+al)^{\frac{1-2a}{a}} (\Dt_\om \dot{l})
    - (1-a)(1-2a)(1-3a) (1+al)^{\frac{1-4a}{a}} \dot{l} |d l|_\om^2 \\
    &\quad - 2 (1-a)(1-2a) (1+al)^{\frac{1-3a}{a}} \langle d l, d \dot{l}\rangle_\om\\
    &= \left(\frac{1-a}{1+al} F_1(a,l) \Scal(\om) + 2 \cdot \frac{1-2a}{1+al} (\pl_t F_1)(a,l) \Dt_\om l 
    - \frac{1-3a}{1+al} (\pl_t^2 F_1)(a,l) |d l|_\om^2\right) \dot{l} \\
    &\quad + 2 (1-a) (1+al)^{\frac{1-2a}{a}} (\Dt_\om \dot{l}) 
    - 2 (1-a)(1-2a) (1+al)^{\frac{1-3a}{a}} \langle d l, d \dot{l}\rangle_\om\\
    &= \frac{1-a}{1+al} \cdot \Scal_{F_1(a,l)}(\om) \dot{l}
    - \frac{2a}{1+al} \cdot \left((\pl_t F_1) \dot{l} \Dt_\om l - (\pl_t^2 F_1) \dot{l} |dl|_\om^2 \right) \\
    &\quad + 2 (1-a) (1+al)^{\frac{1-2a}{a}} (\Dt_\om \dot{l}) 
    - 2 (1-a)(1-2a) (1+al)^{\frac{1-3a}{a}} \langle d l, d \dot{l}\rangle_\om\\
    &= \frac{1-a}{1+al} \cdot \Scal_{F_1(a,l)}(\om) \dot{l} \\
    &\quad- \frac{2a}{1+al} \cdot \left((1-a) (1+al)^{\frac{1-2a}{a}} \dot{l} \Dt_\om l - (1-a)(1-2a) (1+al)^{\frac{1-3a}{a}} \dot{l} |dl|_\om^2 \right) \\
    &\quad + 2 (1-a) (1+al)^{\frac{1-2a}{a}} (\Dt_\om \dot{l}) 
    - 2 (1-a)(1-2a) (1+al)^{\frac{1-3a}{a}} \langle d l, d \dot{l}\rangle_\om\\
    &= \frac{1-a}{1+al} \cdot \Scal_{F_1(a,l)}(\om) \dot{l}\\ 
    &\quad- 2a\left((1-a) (1+al)^{\frac{1-3a}{a}} \dot{l} \Dt_\om l - (1-a)(1-2a) (1+al)^{\frac{1-4a}{a}} \dot{l} |dl|_\om^2 \right) \\
    &\quad + 2 (1-a) (1+al)^{\frac{1-2a}{a}} (\Dt_\om \dot{l}) 
    - 2 (1-a)(1-2a) (1+al)^{\frac{1-3a}{a}} \langle d l, d \dot{l}\rangle_\om\\
    &= \left(\frac{1-a}{1+al}\right) \Scal_{F_1(a,l)}(\om) \dot{l} 
    - 2a(1-a) (1+al)^{-1} \dt_\om((1+al)^{\frac{1-2a}{a}} dl) \dot{l} \\
    &\quad+ 2(1-a)\dt_\om \left((1+al)^{\frac{1-2a}{a}} d \dot{l}\right).
\end{align*} 
\endgroup
We now pay attention to the last term $\dt_\om ((1+al)^{\frac{1-2a}{a}} d \dot{l})$ in the above expression. 
We have 
\begingroup\allowdisplaybreaks
\begin{align*}
    &\dt_\om \left((1+al)^{\frac{1-2a}{a}} d \dot{l}\right) 
    = \dt_\om \left((1+al)^{\frac{1-2a}{a}} d \left((1+al) (1+al)^{-1} \dot{l}\right)\right)\\
    &\qquad= \dt_\om \left((1+al)^{\frac{1-a}{a}} d \left((1+al)^{-1} \dot{l}\right)\right) + \dt_\om \left((1+al)^{\frac{1-2a}{a}} a(1+al)^{-1} \dot{l} dl\right)\\
    &\qquad= \dt_\om \left((1+al)^{\frac{1-a}{a}} d \left((1+al)^{-1} \dot{l}\right)\right) 
    + a(1+al)^{-1} \dot{l} \cdot \dt_\om \left((1+al)^{\frac{1-2a}{a}}  dl\right)\\
    &\qquad\quad - a (1+al)^{\frac{1-2a}{a}} \langle d l, d((1+al)^{-1} \dot{l}) \rangle_\om\\
    &\qquad= (1+al)^{\frac{1-a}{a}} \Dt_\om\left((1+al)^{-1} \dot{l}\right) - (1-a) (1+al)^{\frac{1-2a}{a}} \langle dl, d((1+al)^{-1} \dot{l})\rangle_\om\\
    &\qquad\quad + a(1+al)^{-1} \dot{l} \cdot \dt_\om \left((1+al)^{\frac{1-2a}{a}}  dl\right) - a (1+al)^{\frac{1-2a}{a}} \langle d l, d((1+al)^{-1} \dot{l}) \rangle_\om\\
    &\qquad= (1+al)^{\frac{1-a}{a}} \Dt_\om\left((1+al)^{-1} \dot{l}\right) 
    - (1+al)^{\frac{1-2a}{a}} \langle dl, d((1+al)^{-1} \dot{l})\rangle_\om\\
    &\qquad\quad + a(1+al)^{-1} \dot{l} \cdot \dt_\om \left((1+al)^{\frac{1-2a}{a}}  dl\right) \\
    &\qquad= (1+al)^{-1}\left[ (1+al)^{\frac{1}{a}} \Dt_\om\left((1+al)^{-1} \dot{l}\right) 
    - (1+al)^{\frac{1-a}{a}} \langle dl, d((1+al)^{-1} \dot{l})\rangle_\om\right]\\
    &\qquad\quad + a(1+al)^{-1} \dot{l} \cdot \dt_\om \left((1+al)^{\frac{1-2a}{a}}  dl\right) \\
    &\qquad= (1+al)^{-1} \dt_\om\left( (1+al)^{\frac{1}{a}} d\left((1+al)^{-1} \dot{l}\right)\right) + a(1+al)^{-1} \dot{l} \cdot \dt_\om \left((1+al)^{\frac{1-2a}{a}}  dl\right).
\end{align*}
\endgroup
Combining the above two equations, we obtain the following expression of the linearized operator as desired
\begin{align*}
    L_{\ld,a,l}(\dot{l}) &= \frac{1-a}{1+al} \cdot \Scal_{F_1(a,l)}(\om) \dot{l} 
    - 2a(1-a) (1+al)^{-1} \dot{l} \cdot \dt_\om((1+al)^{\frac{1-2a}{a}} dl)\\
    &\quad + 2(1-a)\bigg[
    (1+al)^{-1} \dt_\om\left( (1+al)^{\frac{1}{a}} d\left((1+al)^{-1} \dot{l}\right)\right) \\
    &\qquad\qquad\qquad\qquad + a(1+al)^{-1} \dot{l} \cdot \dt_\om \left((1+al)^{\frac{1-2a}{a}}  dl\right)
    \bigg] \\
    &= (1-a)\left\{\Scal_{F_1(a,l)}(\om) (1+al)^{-1}  \dot{l} + 2 (1+al)^{-1} \dt_\om ((1+al)^{\frac{1}{a}} d(1+al)^{-1} \dot{l})\right\}\\
    &= (1-a) \left\{\Scal_{F_1(a,l)}(\om) (1+al)^{-1}  \dot{l} + 2 (1+al)^{\frac{1-a}{a}}\Dt_{F_0(a,l)} \left((1+al)^{-1} \dot{l}\right)\right\}.
\end{align*}
This finishes the proof.
\end{proof}

We then calculate the first variational formula of the Einstein--Hilbert functional. 
\begin{lem}
The first variational formulas of $\BFS_a$, $\BFV_a$, and $\EH_{\ld,a,b}$ have the following forms: 
\begin{equation}\label{eq:Sa_1stvar}
    (\dd \BFS_a)_l 
    = \int_X \Scal_{F_1(a,l)}(\om) \dot{l} \om^{[n]}, 
\end{equation}
\begin{equation}\label{eq:Va_1stvar}
    (\dd \BFV_a)_l 
    = \BFV_a(l)^a \int_X F_2(a,l) \dot{l} \om^{[n]},
\end{equation}
\begin{equation}\label{eq:EH_1stvar}
    (\dd \EH_{\ld,a,b})_l(\dot{l}) 
    = \frac{\Fut_{\ld,a,b,l}(\dot{l})}{\BFV_a(l)},  
\end{equation}
where
\begin{equation}\label{eq:Futa_defn}
\begin{split}
    \Fut_{\ld,a,b,l}(\dot{l}) 
    &:= \int_X \left(\Scal_{F_1(a,l)}(\om) - \frac{\ld}{a} (F_1(a,l) - F_2(a,l))\right) \dot{l} \om^{[n]}\\
    &\qquad - \int_X \left(c_{\ld,a}(l) + \frac{b}{\BFV_a(l)^{1-a}}\right) F_2(a,l) \dot{l} \om^{[n]}
\end{split}
\end{equation} 
with 
\begin{equation}\label{eq:ca_defn}
    c_{\ld,a}(l) 
    := \BFV_a(l)^a \EH_{\ld,a,0}(l) - \frac{\ld}{a}(\BFV_a(l)^a - 1). 
\end{equation}
\end{lem}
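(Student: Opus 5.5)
The plan is to differentiate each piece along a path $l+s\dot l$, $\dot l$ affine-linear, using throughout that $\BFS_a$, $\BFV_a$ and $\EH$ do not depend on the choice of $\om\in\af$. We may therefore fix $\om$ and vary only the weight. We also use the identity $\pl_t F_k(a,t)=(1-ka)F_{k+1}(a,t)$, which follows from the derivative formula for $F_k$ recorded above.

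\emph{Step 1: the variation of $\BFV_a$.} Since $\pl_t F_1=(1-a)F_2$, we get
\[
\tfrac{d}{ds}\int_X F_1(a,l+s\dot l)\,\om^{[n]}=(1-a)\int_X F_2(a,l)\,\dot l\,\om^{[n]} .
\]
Raising to the power $\frac1{1-a}$ gives $\BFV_a^{a}\int_X F_2\dot l\,\om^{[n]}$, because $\frac{1}{1-a}-1=\frac{a}{1-a}$ and $\big(\int F_1\big)^{a/(1-a)}=\BFV_a^a$. This is \eqref{eq:Va_1stvar}.

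\emph{Step 2: the variation of $\BFS_a$.} Differentiate the integrand $\Scal_{F_1(a,l)}(\om)(1+al)$:
\[
L_{\ld,a,l}(\dot l)\,(1+al)+a\,\Scal_{F_1(a,l)}(\om)\,\dot l .
\]
By Lemma~\ref{lem:Scal_lin}, the first term equals
\[
(1-a)\Scal_{F_1}\dot l+2(1-a)F_1(1+al)\,\Dt_{\om,F_0}\big((1+al)^{-1}\dot l\big).
\]
Now $F_1(1+al)=F_0$, and $F_0\,\Dt_{\om,F_0}(f)=\dt_\om(F_0\,df)$ is a divergence, so it integrates to zero against $\om^{[n]}$. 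What remains is $(1-a)\Scal_{F_1}\dot l+a\Scal_{F_1}\dot l=\Scal_{F_1}\dot l$, which is \eqref{eq:Sa_1stvar}. This is the step that actually needs care: one must check that the convention $F_1(a,l)(1+al)=F_0(a,l)$ matches the weighted-Laplacian normalisation in Definition~\ref{defn:wLap}.

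\emph{Step 3: the variation of $\EH_{\ld,a,b}$.} Write
\[
\EH_{\ld,a,b}=\BFV_a^{-1}\Big(\BFS_a-\tfrac{\ld}{a}\textstyle\int F_0\Big)+\tfrac{\ld}{a}+b\,\BFV_a^{-1}.
\]
The quotient rule gives
\[
\BFV_a^{-1}\Big[d\BFS_a-\tfrac{\ld}{a}\textstyle\int \pl_tF_0\,\dot l\Big]-\BFV_a^{-2}\,d\BFV_a\,\Big(\BFS_a-\tfrac{\ld}{a}\textstyle\int F_0+b\Big).
\]
Here $\pl_tF_0=F_1$.

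\emph{Step 4: matching with \eqref{eq:Futa_defn}.} Substitute Steps 1 and 2. The subtracted term becomes
\[
\BFV_a^{-1}\,\BFV_a^{a-1}\Big(\BFS_a-\tfrac{\ld}{a}\textstyle\int F_0\Big)\int F_2\dot l+b\,\BFV_a^{-1}\BFV_a^{a-1}\int F_2\dot l .
\]
From the definition of $\EH$,
\[
\BFV_a^{a-1}\Big(\BFS_a-\tfrac{\ld}{a}\textstyle\int F_0\Big)=\BFV_a^a\Big(\EH_{\ld,a,0}-\tfrac{\ld}{a}\Big)=c_{\ld,a}(l)-\tfrac{\ld}{a}.
\]
So the $\ld$-terms combine as
\[
-\tfrac{\ld}{a}\textstyle\int F_1\dot l+\tfrac{\ld}{a}\int F_2\dot l=-\tfrac{\ld}{a}\int(F_1-F_2)\dot l ,
\]
and the remaining pieces are $-c_{\ld,a}\int F_2\dot l$ and $-b\,\BFV_a^{a-1}\int F_2\dot l$. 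Factoring out $\BFV_a^{-1}$ gives exactly \eqref{eq:EH_1stvar}. This last step is pure bookkeeping once Step 2 is settled.
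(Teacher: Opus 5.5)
Your proof is correct and takes the same route as the paper. You differentiate the integrand of $\BFS_a$ via Lemma~\ref{lem:Scal_lin} and use $F_1(1+al)=(1+al)^{\frac{1-a}{a}+1}=F_0$, so the Laplacian term becomes the divergence $\dt_\om(F_0\,d(\cdot))$ and integrates to zero; for $\BFV_a$ you use $\pl_tF_k=(1-ka)F_{k+1}$; and you finish with the quotient rule together with $\BFV_a^{a-1}(\BFS_a-\frac{\ld}{a}\int F_0)=c_{\ld,a}-\frac{\ld}{a}$, which is the paper's auxiliary constant $c'_{\ld,a}$.
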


\begin{proof}
By the previous Lemma~\ref{lem:Scal_lin}, we have 
\[
    L_{\ld,a,l}(\dot{l}) 
    = (1-a) \left(\Scal_{F_1(a,l)}(\om) \cdot (1+al)^{-1} \dot{l} + 2 F_1(a,l) \Dt_{\om,F_0(a,l)} \left((1+al)^{-1} \dot{l}\right)\right).
\]
Hence, one can derive that
\begin{align*}
    (\dd \BFS_a)_l (\dot{l}) 
    &= (1-a) \int_X \Scal_{F_1(a,l)}(\om) \dot{l} \om^{[n]} 
    + 2(1-a) \int (1+al)^{\frac{1}{a}} \Dt_{F_0(a,l)} \left((1+al)^{-1} \dot{l} \right) \om^{[n]}\\
    &\qquad + \int_X \Scal_{F_1(a,l)}(\om) a \dot{l} \om^{[n]}\\
    &= \int_X \Scal_{F_1(a,l)}(\om) \dot{l} \om^{[n]} 
\end{align*}
and this shows \eqref{eq:Sa_1stvar}. 
Then we calculate the first variation of $\BFV_a$: 
\begin{align*}
    (\dd \BFV_a)_l (\dot{l}) 
    &= \frac{1}{1-a} \left(\int_X F_1(a,l) \om^{[n]}\right)^{\frac{a}{1-a}} \int_X (1-a) (1+al)^{\frac{1-2a}{a}} \dot{l} \om^{[n]}\\
    &= \BFV_a(l)^a \int_X (1+al)^{\frac{1-2a}{a}} a \dot{l} \om^{[n]}\\
    &= \BFV_a(l)^a \int_X F_2(a,l) \dot{l} \om^{[n]}. 
\end{align*}
To get the first variational formula of the Einstein--Hilbert functional, we would need the following 
\begin{equation}\label{eq:Sald_1stvar}
\begin{split}
    \left(\dd \left(\BFS_a + \frac{\ld}{-a} \int_X F_0(a,l) \om^{[n]}\right)\right)_l(\dot{l}) 
    &= \int_X \left(\Scal_{F_1(a,l)}(\om) + \frac{\ld}{-a} F_1(a,l)\right) \dot{l} \om^{[n]}
\end{split}
\end{equation}
Combining \eqref{eq:Sald_1stvar} and \eqref{eq:Va_1stvar}, we obtain that 
\begin{align*}
    &(\dd \EH_{\ld,a,b})_l(\dot{l}) \\
    &\quad= \frac{1}{\BFV_a(l)} \int_X \left(\Scal_{F_1(a,l)}(\om) + \frac{\ld}{-a} F_1(a,l)\right) \dot{l} \om^{[n]}\\
    &\qquad - \frac{1}{\BFV_a(l)^2} \left(\BFS_a(l) + \frac{\ld}{-a} \int_X F_0(a,l) \om^{[n]}\right) \cdot \BFV_a(l)^a \int_X F_2(a,l) \dot{l} \om^{[n]} \\
    &\qquad -\frac{b \BFV_a(l)^a \int_X F_2(a,l) \dot{l} \om^{[n]}}{\BFV_a(l)^2}\\
    &\quad= \frac{1}{\BFV_a(l)} \int_X \left(\Scal_{F_1(a,l)}(\om) - \frac{\ld}{a} F_1(a,l) - \left[c_{\ld,a}'(l) + \frac{b}{\BFV_a(l)^{1-a}}\right] F_2(a,l)\right) \dot{l} \om^{[n]}\\
    &\quad= \frac{1}{\BFV_a(l)} \int_X \left(\Scal_{F_1(a,l)}(\om) - \frac{\ld}{a} [F_1(a,l) - F_2(a,l)] - \left[c_{\ld,a}(l) + \frac{b}{\BFV_a(l)^{1-a}} 
    \right] F_2(a,l)\right) \dot{l} \om^{[n]}
\end{align*} 
and this shows \eqref{eq:EH_1stvar}.
Here we set 
\begin{equation}\label{eq:c'_defn}
    c_{\ld,a}'(l) 
    := \frac{1}{\BFV_a(l)^{1-a}} \left(\BFS_a(l) - \frac{\ld}{a}\int_X F_0(a,l) \om^{[n]}\right) 
    = \BFV_a(l)^a \left(\EH_{\ld,a}(l) - \frac{\ld}{a}\right).
\end{equation}
and 
\begingroup\allowdisplaybreaks
\begin{align*}
    c_{\ld,a}(l) 
    &= c_{\ld,a}'(l) + \frac{\ld}{a} 
    = \frac{1}{\BFV_a(l)^{1-a}} \left(\BFS_a(l) -\frac{\ld}{a} \left(\int_X F_0(a,l) \om^{[n]} - \BFV_a(l)^{1-a}\right)\right)\\
    &= \BFV_a(l)^a \EH_{\ld,a}(l) - \frac{\ld}{a} (\BFV_a(l)^a - 1).
\end{align*}
\endgroup
\end{proof}

\begin{lem}\label{lem:EH0_1stvar}
The first variational formula of $\EH_{\ld,0,b}$ is 
\[
    (\dd\EH_{\ld,0,b})_l(\dot{l}) 
    = \frac{\Fut_{\ld,0,b,l}(\dot{l})}{\BFV_0(l)},
\]
where 
\begin{equation}\label{eq:Fut0_defn}
    \Fut_{\ld,0,b,l}(\dot{l}) 
    := \int_X \left(\Scal_{e^l}(\om)  - \ld l e^l - e^l \left[c_{\ld,0}(l) + \frac{b}{\BFV_0(l)}\right]\right) \dot{l} \om^{[n]}
\end{equation}
and 
\[
    c_{\ld,0}(l) := \EH_{\ld,0,0}(l) - \ld \log \BFV_0(l). 
\]
The first variation $(\dd \EH_{\ld,a,b})_l(\dot{l})$ converges locally uniformly in $l$ towards $(\dd \EH_{\ld,0,b})_l(\dot{l})$ as $a \to 0$.
\end{lem}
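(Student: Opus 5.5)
Two routes are available: differentiate the closed formula for $\EH_{\ld,0,b}$ from Lemma~\ref{lem:EH_C0conv} directly, or take the limit $a\to 0$ in \eqref{eq:EH_1stvar}. I would do both, since the convergence statement requires the second anyway, and comparing the two limits confirms the formula.

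\textbf{Direct computation.} Write $\BFV_0(l)=\int_X e^l\om^{[n]}$, so that $(\dd\BFV_0)_l(\dot l)=\int_X e^l\dot l\,\om^{[n]}$. The linearization of $l\mapsto\Scal_{e^l}(\om)$ is $L_{\ld,0,l}$ from Lemma~\ref{lem:Scal_lin}, namely $\Scal_{e^l}(\om)\dot l+2e^l\Dt_{\om,e^l}\dot l$. The weighted Laplacian term integrates to zero against $\om^{[n]}$, because $e^l\Dt_{\om,e^l}f=\dt_\om(e^l df)$ is a divergence. Hence
\[
(\dd\BFS_0)_l(\dot l)=\int_X\Scal_{e^l}(\om)\dot l\,\om^{[n]}.
\]
For the second piece, $\dd\int_X le^l\om^{[n]}=\int_X(1+l)e^l\dot l\,\om^{[n]}$. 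The derivative of $\log\BFV_0$ is $\int_X e^l\dot l\,\om^{[n]}/\BFV_0$. In the $\ld$-part the two contributions $\int_X e^l\dot l\,\om^{[n]}/\BFV_0$ cancel, which leaves
\[
\frac{\ld}{\BFV_0}\Big(-\int_X le^l\dot l\,\om^{[n]}+\frac{\int_X le^l\om^{[n]}}{\BFV_0}\int_X e^l\dot l\,\om^{[n]}\Big).
\]
Next apply the quotient rule to $\BFS_0/\BFV_0$ and $b/\BFV_0$. Collecting everything, the coefficient of $-\int_X e^l\dot l\,\om^{[n]}/\BFV_0$ is
\[
\frac{\BFS_0}{\BFV_0}-\ld\frac{\int_X le^l\om^{[n]}}{\BFV_0}+\frac{b}{\BFV_0}.
\]
Using the definition of $\EH_{\ld,0,0}$, this equals $c_{\ld,0}(l)+b/\BFV_0(l)$ with $c_{\ld,0}=\EH_{\ld,0,0}-\ld\log\BFV_0$. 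This gives \eqref{eq:Fut0_defn}.

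\textbf{Convergence.} I would show that the integrand in \eqref{eq:Futa_defn} converges locally uniformly to the integrand in \eqref{eq:Fut0_defn}, as functions on $\Pol_\af$ together with their pullbacks.

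\begin{itemize}
\item By the expansion \eqref{eq:Fk_exp}, $F_k(a,l)\to e^l$ uniformly, with derivatives in $l$, as $a\to 0$. Therefore $\Scal_{F_1(a,l)}(\om)\to\Scal_{e^l}(\om)$ and $F_2\to e^l$.
\item The delicate term is $\frac{\ld}{a}(F_1-F_2)$. From \eqref{eq:Fk_exp}, $F_1-F_2=a\,le^l+O(a^2)$, so $\frac1a(F_1-F_2)\to le^l$. The $O(a^2)$ remainder is locally uniform for $l$ in compact sets, because $F_k$ is real-analytic in $(a,t)$ near $a=0$.
\item $\BFV_a(l)\to\BFV_0(l)$ by \eqref{eq:Va_exp}. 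Hence $\BFV_a^{1-a}\to\BFV_0$, and $\BFV_a^a=1+a\log\BFV_0+O(a^2)$.
\item For the constant, $c_{\ld,a}(l)=\BFV_a^a\EH_{\ld,a,0}-\frac{\ld}{a}(\BFV_a^a-1)$. By Lemma~\ref{lem:EH_C0conv} this converges to $\EH_{\ld,0,0}-\ld\log\BFV_0=c_{\ld,0}$.
\end{itemize}

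Dividing by $\BFV_a\to\BFV_0>0$ then gives the claimed convergence of $(\dd\EH_{\ld,a,b})_l$.

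The main obstacle is the uniformity of the $O(a)$ remainders, in $l$ on compact sets and in $\dot l$ ranging over the finite-dimensional space ${\rm Aff}(\Pol_\af)$. I would handle this by Taylor's theorem applied to the analytic function $(a,t)\mapsto(1+at)^{(1-ka)/a}$ on a compact set of $t$-values. This is legitimate because $\mu_\om(X)=\Pol_\af$ is compact, so $l(\mu_\om)$ stays in a bounded range. The $\Scal$ term additionally needs uniform control of $\partial_tF_1$ and $\partial_t^2F_1$, and the same analyticity argument supplies it.
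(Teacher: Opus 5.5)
Your proposal is correct and follows the paper's proof essentially step for step. You compute $\dd\EH_{\ld,0,b}$ directly by the quotient rule, with the divergence term $2\,\dt_\om(e^l d\dot l)$ integrating to zero and the two $\int_X e^l\dot l\,\om^{[n]}/\BFV_0$ contributions cancelling in the $\ld$-part, and then you reduce the convergence to $\frac1a(F_1-F_2)\to le^l$ and $c_{\ld,a}\to c_{\ld,0}$ via $\BFV_a^a=1+a\log\BFV_0+O(a^2)$, exactly as the paper does; the extra points you check (convergence of $\Scal_{F_1(a,l)}$, $F_2$ and $\BFV_a^{1-a}$, and uniformity from the real-analyticity of $F_k$ on the compact range of $l(\mu_\om)$) are left implicit in the paper.
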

\begin{proof}
By Lemma~\ref{lem:Scal_lin}, 
\[
    L_{\ld,0,l}(\dot{l}) = \Scal_{e^l}(\om) \dot{l} + e^l \Dt_{\om, e^l}(\dot{l})
\]
and thus, 
\[
    (\dd \BFS_{0})_l(\dot{l}) 
    = \int_X \Scal_{e^l}(\om) \dot{l} \om^{[n]} + \int_X e^l \Dt_{\om, e^l}(\dot{l}) \om^{[n]} 
    = \int_X \Scal_{e^l}(\om) \dot{l} \om^{[n]}.
\]
We also have 
\[
    (\dd \BFV_0)_l(\dot{l}) 
    = \int_X e^l \dot{l} \om^{[n]}.
\]
Thus 
\begin{align*}
    &(\dd \EH_{\ld,0,b})_l(\dot{l}) \\
    &\quad= \frac{1}{\BFV_0(l)} \int_X \Scal_{e^l}(\om) \dot{l} \om^{[n]} 
    - \frac{\BFS_0(l)}{\BFV_0^2(l)} \int_X e^l \dot{l} \om^{[n]} \\
    &\qquad + \ld \left(-\frac{\int_X e^l \dot{l} + l e^l \dot{l} \om^{[n]}}{\BFV_0(l)} + \frac{\int_X l e^l \om^{[n]} \int_X e^l \dot{l} \om^{[n]}}{\BFV_0(l)^2} + \frac{\int_X e^l \dot{l} \om^{[n]}}{\BFV_0(l)}\right)\\
    &\qquad - \frac{b \int_X e^l \dot{l} \om^{[n]}}{\BFV_0(l)^2}\\
    &\quad= \frac{1}{\BFV_0(l)} \int_X \left(\Scal_{e^l}(\om) - \ld l e^l - e^l \left[\frac{\BFS_0(l)}{\BFV_0(l)} - \ld \frac{\int_X l e^l \om^{[n]}}{\BFV_0(l)} + \frac{b}{\BFV_0(l)}\right]\right) \dot{l} \om^{[n]}. 
\end{align*}
Set 
\[
    c_{\ld,0} := \frac{\BFS_0(l)}{\BFV_0(l)} - \ld \frac{\int_X l e^l \om^{[n]}}{\BFV_0(l)} 
    = \EH_{\ld,0,0}(l) - \ld \log \BFV_0(l).
\]

\smallskip
We now show the locally uniform convergence of $(\dd \EH_{\ld,a})_l(\dot{l})$ to $(\dd \EH_{\ld,0})_l(\dot{l})$.
From \eqref{eq:Futa_defn} and \eqref{eq:Fut0_defn}, it suffices to show that the following convergence locally uniformly
\[
    \frac{F_1(a,l) - F_2(a,l)}{a} \xrightarrow[a \to 0]{} l e^l
    \quad\text{and}\quad
    c_{\ld,a}(l) \xrightarrow[a \to 0]{} c_{\ld,0}(l).
\]
The first convergence above follows directly from \eqref{eq:Fk_exp}. 
We then verify the second convergence.
Recall that $c_{\ld,a}(l) = \BFV_a(l)^a \EH_{\ld,a}(l) - \frac{\ld}{a}(\BFV_a(l)^a - 1)$. 
Obviously, the first part converges locally uniformly towards $\EH_{\ld,0}(l)$, we only need to handle the second term. 
By \eqref{eq:Va_exp}, we have
\[
    \BFV_a(l) = \BFV_0(l) 
    + a \left(\BFV_0(l) \log \BFV_0(l) + \int_X e^l(-l^2/2 - l) \om^{[n]}\right)
    + O(a^2).
\]
Hence, 
\begingroup\allowdisplaybreaks
\begin{align*}
    \BFV_a(l)^a 
    &= \exp\left(a \log \left(\BFV_0(l) 
    + a \left(\BFV_0(l) \log \BFV_0(l) + \int_X e^l(-l^2/2 - l) \om^{[n]}\right)
    + O(a^2)\right)\right)\\
    &= \exp \left(a \log \BFV_0(l) + a \log \left(1 + a \left[\log \BFV_0(l) + \frac{\int_X e^l(-l^2/2 - l) \om^{[n]}}{\BFV_0(l)}\right] + O(a^2)\right) \right)\\
    &= \exp\left(a \log \BFV_0(l) +O(a^2)\right) \\
    &= 1 + a \log \BFV_0(l) + O(a^2). 
\end{align*}
\endgroup
From the above expansion, one can easily see the convergence of $c_{\ld,a}(l) \to c_{\ld,0}(l)$.
\end{proof}

By arguments similar to those in Lemma~\ref{lem:EH_C0conv} and Lemma~\ref{lem:EH0_1stvar}, one verifies that $\EH_{\ld,a,b}$ converges to $\EH_{\ld,0,0}$ in $\CC^2_{\loc}$. 
The computations follow the same ideas as in the proofs of the $\CC_{\loc}^0$ and $\CC_{\loc}^1$ convergence, and we therefore omit the details here. 
Combining these results, we obtain the following proposition. 

\begin{prop}\label{prop:EH_C2loc}
The Einstein--Hilbert functional $\EH_{\ld,a,b}(l)$ converges in $\CC^2_{\loc}$ to $\EH_{\ld,0,0}(l)$ when $(a,b) \to (0,0)$, and 
\[
    (\dd \EH_{\ld,a,b})_{l}(\dot{l}) = \frac{\Fut_{\ld,a,b,l}(\dot{l})}{\BFV_a(l)}.
\]
\end{prop}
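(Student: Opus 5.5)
The plan is to reduce everything to finite-dimensional integrals over the momentum polytope. Every quantity in $\EH_{\ld,a,b}$ can be written against the Duistermaat--Heckman measure $\mu_{\rm DH}=(\mu_\om)_*\om^{[n]}$, plus one boundary term. Indeed, $\BFV_a$ and $\int_X F_0(a,l)\om^{[n]}$ are already of the form $\int_{\Pol_\af} G(a,l(x))\,d\mu_{\rm DH}$. For $\BFS_a$, we use the standard identity (see \cite{Lahdili_2019})
\[
\int_X \Scal_v(\om)\,f(\mu_\om)\,\om^{[n]} = 2\int_{\partial \Pol_\af} v f\, d\sigma + \int_{\Pol_\af}\Big(\cdots\Big),
\]
which expresses $\int_X\Scal_v(\om)(1+al)\om^{[n]}$ as a fixed linear functional of the function $F_1(a,l(x))(1+al(x))=F_0(a,l(x))$ on $\Pol_\af$. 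Here the functional is $f\mapsto 2\int_{\pl\Pol_\af} f\,d\sigma$, where $d\sigma$ is the boundary measure. Thus $\EH_{\ld,a,b}(l)$ becomes an explicit combination of finitely many integrals $\int G(a,\langle\xi,x\rangle+c)\,d\nu$, with $\nu$ ranging over fixed finite measures on the compact set $\Pol_\af$. The functions $G$ are drawn from $F_0$, $F_1$, and the parameter $l=(\xi,c)$ lies in the finite-dimensional space ${\rm Aff}(\Pol_\af)$.

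The key step is a uniform statement about $F_k$. The map $(a,t)\mapsto F_k(a,t)=\exp\big(\tfrac{1-ka}{a}\log(1+at)\big)$ is real-analytic on the open set $\{0<1+at<2\}$, including at $a=0$. This holds because $\tfrac{1}{a}\log(1+at)=\sum_{m\ge1}(-1)^{m+1}a^{m-1}t^m/m$ is analytic there. Hence on any compact set $K$ of admissible $l$, the derivatives $\pl_l^\beta F_k(a,l(x))$ with $|\beta|\le 2$ converge uniformly in $(x,l)\in\Pol_\af\times K$ to those of $e^{l}$. The same holds for $\tfrac{1}{a}(F_1-F_2)$, which equals $(1+at)^{(1-2a)/a}\,t$ exactly, and so converges with its derivatives to $te^t$.

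For the awkward terms we argue as follows. The expression $\frac1a\big(\BFV_a(l)-\int F_0(a,l)\big)$, together with $\BFV_a(l)^{1/(1-a)}$-type powers and $\frac{1}{a}(\BFV_a^a-1)$, are analytic functions of $(a,I_1,\dots,I_m)$ in the integrals $I_j$. The expansions in Lemma~\ref{lem:EH_C0conv} show that they have removable singularities at $a=0$. Writing, for example, $\BFV_a(l)^a=\exp\big(\tfrac{a}{1-a}\log\int F_1\big)$ makes this manifest. Composing with the $\CC^2$-convergent integrals and differentiating under the integral sign gives $\CC^2_{\loc}$ convergence. The $b$-term $b/\BFV_a(l)$ tends to $0$ in $\CC^2_{\loc}$ because $\BFV_a$ is bounded below on compacts. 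Finally, the limit of the $\ld$-part is identified with the one in Lemma~\ref{lem:EH_C0conv}.

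The variational formula is \eqref{eq:EH_1stvar}, already proven for each $(a,b)$. The main obstacle is the uniformity of the analytic continuation at $a=0$ for the quotient terms $\frac1a(\cdots)$. I would handle it by the joint analyticity argument above rather than by Taylor bookkeeping.
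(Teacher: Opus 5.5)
Your argument is correct in substance, but it takes a different route from the paper. The paper gets $\CC^0_\loc$ and $\CC^1_\loc$ convergence from explicit first-order expansions in $a$ (Lemmas~\ref{lem:EH_C0conv} and~\ref{lem:EH0_1stvar}). These expansions also identify the limit functional and its first variation $\Fut_{\ld,0,b,l}$. The paper then asserts the $\CC^2_\loc$ statement ``by similar arguments''. The variational formula is just \eqref{eq:EH_1stvar}, as you say. You instead show that every building block is jointly real-analytic in $(a,l)$ across $a=0$. This gives $\CC^k_\loc$ convergence for all $k$ at once. It also removes the need to control Taylor remainders after differentiating in $l$, which is exactly the part the paper omits. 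The paper's bookkeeping produces the limiting quantities explicitly, whereas you take the identification of the limit from Lemma~\ref{lem:EH_C0conv}.

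Two statements in your proposal need fixing.

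First, $\int_X\Scal_v(\om)\om^{[n]}=2\int_{\pl\Pol_\af}v\,d\sigma$ is Donaldson's formula for \emph{toric} manifolds, and it fails for general $(X,\T)$. For example, on $\PP^1\times\Sigma_g$ with $g\neq 1$ and $\T$ rotating $\PP^1$, the $\Sigma_g$ factor contributes an interior term. What you actually need is the identity $\int_X\Scal_v(\om)\,\ell(\mu_\om)\,\om^{[n]}=\int_X\Scal_{v\ell}(\om)\,\om^{[n]}$ for affine $\ell$. It follows by integration by parts, using $\Hess\,\ell=0$, and gives
\[
\BFS_a(l)=\int_X\Scal_{F_0(a,l)}(\om)\,\om^{[n]}=\int_X\Big(F_0(a,l)\,\Scal(\om)+(\pl_t^2F_0)(a,l)\,|dl|^2_\om\Big)\om^{[n]} .
\]
This is again an integral of analytic functions of $(a,l(\mu_\om))$ against fixed smooth densities. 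Alternatively, use the pointwise formula for $\Scal_{F_1(a,l)}(\om)$ from the proof of Lemma~\ref{lem:Scal_lin} and skip the reduction to $\Pol_\af$ entirely.

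Second, $\frac1a\big(\BFV_a(l)-\int_XF_0(a,l)\om^{[n]}\big)$ is \emph{not} analytic as a function of $a$ and independent variables $I_1=\int_XF_1$, $I_2=\int_XF_0$. It has a genuine pole wherever $I_1\neq I_2$, and pointwise-in-$l$ expansions do not give joint regularity. Either of the following fixes works:
\begin{itemize}
\item Use $F_0=(1+at)F_1$ to rewrite the expression as $I_1\cdot\frac1a\big(e^{\frac{a}{1-a}\log I_1}-1\big)-\int_X l\,F_1(a,l)\,\om^{[n]}$, which is manifestly jointly analytic.
\item Apply Hadamard's lemma $G(a,l)/a=\int_0^1(\pl_aG)(sa,l)\,ds$ to the jointly smooth function $G=\BFV_a-\int_XF_0$, which vanishes identically at $a=0$.
\end{itemize}

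For the $b$-term you also need uniform $\CC^2(K)$ bounds on $1/\BFV_a$ for small $|a|$, not just a lower bound on $\BFV_a$; these follow from joint smoothness. With these corrections the proof is complete.
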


\section{Construction of scalar-flat cones}\label{sec:4}
In this section, we use an implicit function theorem at infinity to construct scalar-flat K\"ahler cones starting from extremal K\"ahler metrics.
To invoke a LeBrun--Simanca type implicit function theorem (Lemma~\ref{l:(v,w)-LeBrun-Simanca}), it is essential to verify the vanishing of the weighted Futaki invariant.
The Einstein--Hilbert functional introduced in the previous section plays a crucial role in establishing the vanishing of the Futaki invariants for the relevant family of weights.

\smallskip
Let $(X, L)$ be a polarized orbifold of complex dimension $n$ with $\af = 2 \pi c_1(L)$. 
Suppose that $X$ admits an extremal orbifold K\"ahler metric in $\af$. 
We define the following rescaled functional of $\EH_{\ld,a,b}$: 
\begin{align*}
    \widetilde{\EH}_{\vep,a,b}(l) 
    &:= \vep^{-1} \left(\EH_{\vep^{-1},a,b}(\vep l) - \vep^{-1} \cdot \frac{1}{a} \left(1 - \frac{\BFV_0(0)}{\BFV_a(0)}\right) - \bar{s} - \frac{b}{\BFV_a(0)}\right) \\
    &= \frac{1}{\vep} \left(\frac{\BFS_a(\vep l)}{\BFV_{a}(\vep l)} + \frac{1}{\vep a} \left(\frac{\BFV_0(0)}{\BFV_a(0)} - \frac{\int_X F_0(a,\vep l) \om^{[n]}}{\BFV_a(\vep l)}\right) - \bar{s} + b\left(\frac{1}{\BFV_a(\vep l)} - \frac{1}{\BFV_a(0)}\right) \right). 
\end{align*}
In the following, we consider the rescaled Einstein--Hilbert functionals $\widetilde{\EH}_{\vep,a,b}$  restricted to the subspace of \emph{normalized} affine-linear functions $l$ by requiring $\int_{\Pol_{\af}} l \, \dd\mu_{\mathrm{DH}} = 0$ where $\mu_{\mathrm{DH}} = (\mu_\om)_\ast \om^{[n]}$ is the Duistermaat--Heckman measure. 
Using similar arguments as in Lemma~\ref{lem:EH_C0conv} and Lemma~\ref{lem:EH0_1stvar}, one can verify that $\widetilde{\EH}_{\vep,a,b}$ converges to $\widetilde{\EH}_{0,0,0}$ in $\CC^2_{\loc}$ when $(\vep,a,b) \to (0,0,0)$. 

\begin{lem}\label{l:conv}
The functional 
$\widetilde{\EH}_{\vep,a,b}$ converges to $\widetilde{\EH}_{0,0,0}$ in $\CC^2_{\loc}$ as $(\vep,a,b) \to (0,0,0)$. 
The limiting functional admits the explicit expression 
\begin{align*}
    \widetilde{\EH}_{0,0,0}(l)
    &= \frac{1}{\BFV_0(0)} \int_X \left(\Scal(\om) - \bar{s} - \frac{1}{2} (l - \overline{l})\right) l \om^{[n]}\\
    &= - \frac{1}{2\BFV_0(0)} \left(\int_X \left((\Scal(\om) - \bar{s}) - (l - \overline{l})\right)^2 \om^{[n]} - \int_X (\Scal(\om) - \bar{s})^2 \om^{[n]}\right). 
\end{align*}
where $\overline{l} = \BFV_0(0)^{-1} \int_X l \om^{[n]}$. 
\end{lem}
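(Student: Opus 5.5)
The plan is to Taylor expand $\widetilde{\EH}_{\vep,a,b}(l)$ in $\vep$ at fixed $l$, with remainders that are uniform in $(a,b)$ near $(0,0)$ and in $l$ ranging over a compact set of normalized affine-linear functions. The tool for uniformity is the joint real-analyticity of $F_k(a,t)=(1+at)^{(1-ka)/a}$ in $(a,t)$ near $(0,0)$, which follows from the analytic extension $F_k(0,t)=e^t$. Since $\mu_\om(X)=\Pol_\af$ is compact, every integrand $F_k(a,\vep l(\mu_\om))$, $\Scal_{F_1(a,\vep l)}(\om)$, and so on, depends smoothly on $(\vep,a,l)$. All functionals involved ($\BFS_a(\vep l)$, $\BFV_a(\vep l)$, $\int_X F_0(a,\vep l)\om^{[n]}$) are therefore smooth in $(\vep,a,b,l)$ jointly. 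Any identity "$g(\vep,a,l)=\vep^m\, h(\vep,a,l)$" obtained by Hadamard's lemma then gives a smooth $h$, and $\CC^2_{\loc}$-convergence in $l$ reduces to continuity of $h$ at $(0,0,0)$. I will split $\widetilde{\EH}_{\vep,a,b}$ into three pieces: the $\BFS/\BFV$ piece, the $\ld=\vep^{-1}$ piece, and the $b$ piece.

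\emph{The $b$ piece.} It equals $b\,\vep^{-1}\bigl(\BFV_a(\vep l)^{-1}-\BFV_a(0)^{-1}\bigr)$. The bracket vanishes at $\vep=0$, so it is $\vep$ times a smooth function, and the whole piece is $b$ times a smooth function. It therefore tends to $0$ in $\CC^2_{\loc}$ as $b\to 0$.

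\emph{The $\BFS/\BFV$ piece.} This is $\vep^{-1}\bigl(\BFS_a(\vep l)/\BFV_a(\vep l)-\bar s\bigr)$. By \eqref{eq:Sa_1stvar} and \eqref{eq:Va_1stvar}, together with $F_1(a,0)=F_2(a,0)=1$ and the normalization $\int_X l\,\om^{[n]}=0$ (so that $\overline{l}=0$), its $\vep$-derivative at $\vep=0$ is $\int_X\Scal(\om)\,l\,\om^{[n]}/\BFV_a(0)$. The one subtlety is the zeroth-order term $\vep^{-1}\bigl(\BFS_a(0)/\BFV_a(0)-\bar s\bigr)$. Here $\BFS_a(0)=\bar s\,\BFV_0(0)$ and $\BFV_a(0)=\BFV_0(0)^{1/(1-a)}$, so this term vanishes identically when $\BFV_0(0)=1$, and otherwise is an $l$-independent constant of size $O(a/\vep)$. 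I would handle it by normalizing the volume of $\af$ to $1$ through rescaling the torus parameter, or, failing that, by noting that it is a constant in $l$. A constant does not affect the first and second derivatives, which is all that is used later, so one can simply subtract it. This is the step I would check most carefully against the intended definition.

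\emph{The $\ld=\vep^{-1}$ piece.} This is the main computation. Set $h(\vep,a):=\int_X F_0(a,\vep l)\om^{[n]}/\BFV_a(\vep l)$, so the piece equals $-\vep^{-2}a^{-1}\bigl(h(\vep,a)-h(0,a)\bigr)$. At $a=0$ we have $h\equiv 1$, because $F_0(0,t)=e^t=F_1(0,t)$. Hence $h(\vep,a)-h(0,a)=a\,k(\vep,a)$ with $k$ smooth, and also $k(0,a)=0$. Next, $\pl_\vep k(0,a)=0$, since $\pl_tF_0(a,0)=1$, $d\BFV_a$ at $0$ pairs $l$ against $F_2(a,0)=1$, and $\int l=0$. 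So $k=\vep^2\,m(\vep,a)$ with $m$ smooth, and the piece is $-m$. Its limit is $-\tfrac12\pl_\vep^2k(0,0)$. Computing $\pl_\vep^2 h(0,a)$ with $\pl_t^2F_0(a,0)=1-a$, $\pl_tF_2(a,0)=1-2a$ and $\BFV_a(0)^{1-a}=\BFV_0(0)$ gives $\pl_\vep^2h(0,a)=a\int_X l^2\om^{[n]}/\BFV_a(0)$. The piece therefore converges to $-\tfrac{1}{2\BFV_0(0)}\int_X l^2\,\om^{[n]}$.

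Adding the three limits yields the first expression in the lemma, since $\overline{l}=0$. The second expression follows by completing the square. I expect the main obstacle to be this double cancellation in the $\ld$-piece, where a factor $a$ and a factor $\vep^2$ must both be extracted uniformly. The Hadamard-lemma factorization above, applied on the smooth family in $(\vep,a,l)$, is designed to make that cancellation, and hence the $\CC^2_{\loc}$-convergence, automatic.
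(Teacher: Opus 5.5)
Your argument is correct and follows essentially the route the paper intends. The paper gives no details beyond saying the lemma follows from the same Taylor expansions as Lemmas~\ref{lem:EH_C0conv} and~\ref{lem:EH0_1stvar}. Your Hadamard-lemma factorizations (using $h(\vep,0,l)\equiv 1$ and $\pl_\vep h(0,a,l)=0$ for normalized $l$) carry out that double expansion uniformly. They also give joint smoothness in $(\vep,a,b,l)$, which is what the implicit-function step in Theorem~\ref{thm:SFC} really needs.

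Your worry about the zeroth-order term is justified. With the definition as written, $\widetilde{\EH}_{\vep,a,b}(0)=\vep^{-1}\bar s\bigl(\BFV_0(0)^{-a/(1-a)}-1\bigr)$. This does not converge as $(\vep,a)\to(0,0)$ unless $\bar s=0$ or $\BFV_0(0)=1$. So the $\CC^0$ part of the lemma only holds after replacing $\bar s$ by $\BFS_a(0)/\BFV_a(0)$, or equivalently modulo $l$-independent constants.

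Of your two proposed fixes, only the second works. $\BFV_0(0)=\int_X\om^{[n]}$ is fixed by $\af=2\pi c_1(L)$ and cannot be normalized by rescaling a torus parameter. Subtracting the constant suffices, though, since only $d\widetilde{\EH}$ and $d^2\widetilde{\EH}$ enter the proof of Theorem~\ref{thm:SFC}.
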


Note that the Hessian of $\widetilde{\EH}_{0,0,0}$, 
\[
    (\dd^2 \widetilde{\EH}_{0,0,0})_l(\dot{l}_1, \dot{l}_2) 
    = - \frac{1}{\BFV_0(0)} \int_X (\dot{l}_1 - \overline{\dot{l}_1}) (\dot{l}_2 - \overline{\dot{l}_2}) \om^{[n]}
\]
is strictly concave on the space of normalized affine-linear functions $\ell$. 
If $\om$ is an extremal K\"ahler metric in $\af$, we have  
\[ 
    \Scal(\om) = \mu_\om^* l_{\ext} 
    := 
    \langle\xi_{\ext}, \mu_\om\rangle +c_{\ext}, 
\]
and $\mathring{l}_{\ext} := \ell_{\rm ext} - \overline{\ell}_{\rm ext}$ is the unique normalized maximizer for $\widetilde{\EH}_{0,0,0}(\bullet)$.

\begin{rmk}
When $a=b=0$, $\widetilde{\EH}_{\vep,0,0}$ recovers the functional $W_\vep^\dagger$ in Inoue's work \cite[page~42]{Inoue_2022}.
This functional can be regarded as a scaled version of Perelman's $W$-functional (cf. \cite[Remark~3.8]{Inoue_2022}). 
\end{rmk}

\begin{thm}\label{thm:SFC}
Let $(X, L_X, \omega_X)$ be an $n$-dimensional polarized orbifold endowed with   an extremal orbifold K\"ahler metric  $\omega_X \in 2\pi c_1(L_X)$. 
There exist constants $\ld_+ > 0$, $\ld_- < 0$ and $N_0 \in \BN_{>0}$ such that, for all 
\[
    \ld \in (-\infty, \ld_-) \cup (\ld_+, +\infty) 
    \quad\text{and}\quad
    N \in (N_0, +\infty) \cap \BN,
\] 
if $(Z, L_Z, \om_Z)$ is a compact polarized K\"ahler manifold satisfying the following conditions: 
\begin{itemize}
    \item $Z$ is of complex dimension $(N-n)$, 
    \item $\om_Z\in 2\pi c_1(L_Z)$ is a K\"ahler metric of constant scalar curvature $\Scal(\om_Z) = \ld N$,
\end{itemize}
then the unitary bundle $\Sa_{\lambda, N}$ in $(\pi_X^* L_X \otimes \pi_Z^* L_Z)^{-1}\to X\times Z$ with respect to the hermitian metric with curvature $\omega_X \oplus \omega_Z$ admits a Sasaki structure of constant scalar curvature. Furthermore,  the constants $\lambda_{\pm}$ can be so chosen that if $\lambda > \ld_+$ the transversal scalar curvature of $\Sa_{\lambda, N}$ is positive,  whereas in the case $\ld < \ld_-$ the transversal scalar curvature of $\Sa_{\lambda, N}$ is negative.  In particular, if  $\ld > \ld_+$, the cone $\overline{((\pi_X^*L_X \otimes \pi_Z^* L_Z)^{-1})^\times}$ associated to the polarized orbifold $(X \times Z, \pi_X^*L_X \otimes \pi_Z^*L_Z)$ admits a scalar-flat K\"ahler cone metric. 
\end{thm}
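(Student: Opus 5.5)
The plan is to reduce the problem to a weighted cscK problem on $X$ alone, and then solve that problem by perturbing the extremal metric $\omega_X$. The vanishing of the relevant Futaki invariant will come from critical points of the rescaled functionals $\widetilde{\EH}_{\vep,a,b}$.

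\textbf{Step 1: reduction to $X$.} We apply Corollary~\ref{c:tool} to the polarized orbifold $(\mathcal X, \mathcal L)=(X\times Z, \pi_X^*L_X\otimes\pi_Z^*L_Z)$, which has complex dimension $N$. We look for a $(v,w)$-cscK metric of product form $\omega+\omega_Z$, with $\omega\in 2\pi c_1(L_X)$ $\T$-invariant. The affine function $\ell$ is taken to depend only on the $\Pol_{L_X}$-variable, i.e.\ $\xi\in{\rm Lie}(\T)$. Then $v=\ell^{-(N+1)}$ is pulled back from $X$. Since $\Scal(\omega_Z)=\ld N$ is constant, $\Scal_v(\omega+\omega_Z)=\Scal_v(\omega)+\ld N\,v(\mu_\omega)$. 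The equation of Corollary~\ref{c:tool} therefore becomes the following equation on $X$:
\[
\Scal_{\ell^{-(N+1)}}(\omega)+\ld N\,\ell^{-(N+1)}=c\,\ell^{-(N+2)} .
\]
We write $\ell=1+al$ with $a:=-1/N$. Then
\[
F_1(a,l)=\ell^{-(N+1)},\qquad F_2(a,l)=\ell^{-(N+2)},\qquad -\tfrac{\ld}{a}\bigl(F_1-F_2\bigr)=\ld N(F_1-F_2).
\]
Hence the weighted Futaki invariant of the weights
\[
v=F_1(a,l),\qquad w=\bigl(c_{\ld,a}(l)+b\,\BFV_a(l)^{a-1}-\ld N\bigr)F_2(a,l)-\ld N\, F_1(a,l)
\]
is exactly $\Fut_{\ld,a,b,l}$ from \eqref{eq:Futa_defn}. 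Here $b$ plays the role of the free constant $c$. By Proposition~\ref{prop:EH_C2loc}, critical points of $\EH_{\ld,a,b}$ give vanishing Futaki invariant in the directions actually varied. The constant direction of ${\rm Aff}(\Pol)$ is then killed by choosing the free constant $b$ (equivalently $c$), since the integral against $F_2$ is nonzero.

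\textbf{Step 2: critical points via the implicit function theorem.} We set $\ld=\vep^{-1}$ and replace $l$ by $\vep l$, and work with $\widetilde{\EH}_{\vep,a,b}$ on the finite-dimensional space of normalized affine functions. By Lemma~\ref{l:conv}, $\widetilde{\EH}_{\vep,a,b}\to\widetilde{\EH}_{0,0,0}$ in $\CC^2_{\loc}$. The limit has a nondegenerate (strictly concave) maximizer at $\mathring l_{\ext}$. The implicit function theorem (or a degree/Brouwer argument if only continuity in the parameters is available) then gives critical points $l_{\vep,a,b}\to\mathring l_{\ext}$ as $(\vep,a,b)\to 0$.

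Unwinding the rescaling, the normalized weights have the following limits:
\begin{itemize}
\item $v=F_1(a,\vep l)\to 1$;
\item $\vep^{-1}\bigl(w-\text{const}\bigr)$, with $w$ rescaled appropriately, tends to $l_{\ext}$ plus a constant, because $(F_1-F_2)/(a\vep)\to l$.
\end{itemize}
So the weight family converges to $(1,\ell_{\ext})$. By Lemma~\ref{l:extremal}, $\omega_X$ is $(1,\ell_{\ext})$-cscK.

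\textbf{Step 3: LeBrun--Simanca perturbation.} The family is indexed by real parameters $t=(\vep,a)$, with $b$ determined. It extends smoothly through $t=0$ because $F_k$ is analytic at $a=0$. Lemma~\ref{l:(v,w)-LeBrun-Simanca} then yields $(v_t,w_t)$-cscK metrics for $\|t\|$ small, i.e.\ for $|\ld|>\ld_\pm$ and for $N>N_0$ through $a=-1/N$. Corollary~\ref{c:tool} turns these into constant transversal scalar curvature Sasaki structures on $\Sa_{\ld,N}$.

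\textbf{Step 4: sign of the constant.} The sign of $c$ is read off from its leading behaviour. We have $c\approx \ld N$ plus lower-order terms: from the equation integrated against $\ell$, $c\int\ell^{-(N+1)}\approx \ld N\int \ell^{-N}+O(1)$. So $c$ has the sign of $\ld$ once $|\ld|$ is large. For $\ld>\ld_+$, part (iii) of Corollary~\ref{c:tool} gives the scalar-flat cone.

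\textbf{Main obstacle.} The delicate point is making Steps 2--3 rigorous. One must check that the $\CC^2_{\loc}$ convergence holds jointly and uniformly in $(\vep,a,b)$, so that the critical point $l_{\vep,a}$ and the constant $b$ depend smoothly or at least continuously on the parameters. One must also check that the resulting weight family is admissible for Lemma~\ref{l:(v,w)-LeBrun-Simanca} at the degenerate endpoint $(\vep,a)=(0,0)$. I would first try to handle this by using the analyticity of $F_k$ in $a$ and of $\widetilde{\EH}$ in $\vep$, so that the functionals extend smoothly to a full neighbourhood of $0$ in parameter space.
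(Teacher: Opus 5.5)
Your proposal is correct and follows essentially the paper's proof. The steps match: reduce via the product with $Z$ to a weighted cscK problem on $X$ with $a=-1/N$ and $\lambda=\varepsilon^{-1}$; obtain critical points of $\widetilde{\mathbf{EH}}_{\varepsilon,a,b}$ near $\mathring{\ell}_{\rm ext}$ by the implicit function theorem; adjust $b$ to kill the constant direction; apply the LeBrun--Simanca lemma; and check the sign by showing the cone constant is $\lambda N+O(1)$.

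Two small points to tighten. First, the coefficient of $F_2$ in your $w$ should contain $+\lambda N$, not $-\lambda N$. Second, the critical point $l_{\varepsilon,a,b}$ depends on $b$, so choosing $b$ is really a second implicit-function step, for
\[
\mathfrak{F}(\varepsilon,a,b)=c_{\varepsilon^{-1},a}(\varepsilon l_{\varepsilon,a,b})+\frac{b}{\int_X F_1(a,\varepsilon l_{\varepsilon,a,b})\,\omega^{[n]}}-d_{\varepsilon^{-1},a}(\varepsilon l_{\varepsilon,a,b}).
\]
This works because $c-d\to 0$ as $(\varepsilon,a)\to(0,0)$, which gives $\mathfrak{F}(0,0,b)=b/\mathbf{V}_0(0)$, so $\mathfrak{F}(0,0,0)=0$ with nonzero $b$-derivative.
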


\begin{proof} 
We work on $(X, L_X, \T)$ and drop the subscript $X$ to ease notation. 
Thus $L=L_X$ is the polarization of $X$ and, without loss of generality,  $\omega = \omega_X \in 2\pi c_1(L)$ is a $\T$-invariant extremal K\"ahler metric, viewed as $(1, \ell_{\rm ext})$-cscK metric (see Lemma~\ref{l:extremal}), where $\ell_{\rm ext}(x) = \langle\xi_{\ext},x\rangle + c_{\ext}$ denotes the Mabuchi--Futaki affine linear function on $\Pol_{L}$. 

\smallskip
We consider the normalized Einstein--Hilbert functionals $\widetilde{\bf EH}_{\varepsilon, a, b}(\bullet)$ defined on the open space of normalized affine-linear functions ${\rm Aff}_0(\Pol_L)$ satisfying 
\[ 
    0< 1 + a \ell(x) <2 \quad \text{on $\Pol_L$}, \qquad \int_{\Pol_L} l (x) \, \dd\mu_{\rm DH}=0,
\]
where 
$\mu_{\rm DH} = (\mu_\om)_\ast \om^{[n]}$ is the induced Duistermaat--Heckman measure on $\Pol_L$.
Recall that $\widetilde{\EH}_{0,0,0}(\bullet)$ is strictly concave on ${\rm Aff}_0(\Pol_L)$ and $\mathring{l}_{\ext}$ is its unique global maximizer. 
Since $\widetilde{\EH}_{\vep,a,b}(\bullet)$ converges to $\widetilde{\EH}_{0,0,0}(\bullet)$ locally uniformly in $\CC^2$ as $(\vep, a, b) \to (0,0,0)$, by the implicit function theorem, there exist an open neighborhood $U$ of $(0,0,0)$ in $\BR^3$ and a differentiable function $U \ni (\vep,a,b) \mapsto l_{\vep,a,b} \in {\rm Aff}_0(\tor^*)$, such that $l_{0,0,0} =  \mathring{l}_{\ext}$ and $(\dd \widetilde{\EH}_{\vep,a,b})_{l_{\vep,a,b}}(\bullet) = 0$ for all $(\vep,a,b) \in U$. 
This shows that 
\begin{equation}\label{eq:Fut=0mainthm}
    \Fut_{\vep^{-1},a,b,\vep l_{\vep,a,b}}(\xi) = 0, 
    \quad \forall \xi \in {\rm Aff}_0(\tor^*).
\end{equation}
 
\smallskip
In order to apply Lemma~\ref{l:(v,w)-LeBrun-Simanca}, we need to make sure that  the Futaki invariant $\Fut_{\vep^{-1},a,b,\vep l_{\vep,a,b}}(\xi)$ vanishes  for all affine-linear functions.  To achieve this, we shall choose the constant $b = b(\vep,a)$ such that 
\begin{equation}\label{eq:Fut_adjust}
    \Fut_{\vep^{-1}, a, b(\vep,a), \vep l_{\vep,a,b(\vep,a)}}(1) = 0.
\end{equation}  
The latter is equivalent to $\mathfrak{F}(\vep,a,b(\vep,a)) = 0$, where
\[
    \mathfrak{F}(\vep,a,b) 
    :=
    c_{\vep^{-1},a}(\vep l_{\vep,a,b}) + \frac{b}{\int_X F_1(a,\vep l_{\vep,a,b}) \om^{[n]}} 
    - d_{\vep^{-1},a}(\vep l_{\vep,a,b}),
\]
and
\begin{equation}\label{eq:d_defn}
    d_{\ld,a}(l) := \frac{1}{\int_X F_2(a,l) \om^{[n]}} \int_X \left(\Scal_{F_1(a,l)}(\om) - \frac{\ld}{a} (F_1(a,l)-F_2(a,l))\right) \om^{[n]}.
\end{equation}
To produce such a subfamily $(\vep, a, b(\vep,a))$ passing through $(0,0,0)$ such that $\mathfrak{F}(\vep, a, b(\vep,a)) = 0$ via the implicit function theorem, it suffices to verify that $\mathfrak{F}(0,0,0) = 0$ and $\pl_b \mathfrak{F}(0,0,0) \neq 0$. 
Observe that 
\begin{align*}
    c_{\vep^{-1},a}(\vep l) - d_{\vep^{-1},a}(\vep l) 
    &= \frac{\int_X \Scal_{F_1(a,\vep l)}(\om) a \vep l \om^{[n]}}{\int_X F_1(a,\vep l)\om^{[n]}}\\
    &\quad+ \frac{\left(\int_X (F_2(a,\vep l) - F_1(a,\vep l)) \om^{[n]}\right) \int_X \Scal_{F_1(a,\vep l)}(\om) \om^{[n]}}{\int_X F_1(a,\vep l) \om^{[n]} \int_X F_2(a,\vep l) \om^{[n]}} \\
    &\quad - \frac{1}{\vep a} \left(\frac{\int_X F_0(a,\vep l) \om^{[n]} \int_X F_2(a,\vep l) \om^{[n]} - (\int_X F_1(a,\vep l)\om^{[n]})^2}{\int_X F_1(a,\vep l) \om^{[n]} \int_X F_2(a,\vep l) \om^{[n]}}\right)
\end{align*}
which converges to $0$ as $(\vep,a) \to (0,0)$. 
Therefore, we have $\mathfrak{F}(0,0,b) = \frac{b}{\BFV_0(0)}$, and thus, $\mathfrak{F}(0,0,0) = 0$ and  $\pl_b \mathfrak{F}(0,0,0) = \frac{1}{\BFV_0(0)} > 0$. 
Putting now $l_{\vep,a} := l_{\vep,a,b(\vep,a)}$, we then obtain, by \eqref{eq:Fut=0mainthm} and \eqref{eq:Fut_adjust}, 
\begin{equation}\label{eq:Fut=0True}
    \Fut_{\vep^{-1}, a, b(\vep,a), \vep l_{\vep,a}}(\bullet) \equiv 0
\end{equation}
for all affine linear functions.

\smallskip
Combining \eqref{eq:Fut=0True} and  Lemma~\ref{l:(v,w)-LeBrun-Simanca}, we obtain a differentiable $(\vep,a)$-family of $\T$-invariant K\"ahler metrics $(\om_{\vep,a})_{(\vep,a) \in U}$ satisfying 
\[
    \Scal_{(1 + a \vep l_{\vep,a})^{\frac{1-a}{a}}}(\om_{\vep,a}) 
    = \mu_{\omega_{\varepsilon, a}}^*\left(\frac{1}{\vep a} ((1 + a\vep l_{\vep,a})^{\frac{1-a}{a}} - (1+a \vep l_{\vep,a})^{\frac{1-2a}{a}}) + d_{\vep^{-1},a}(\vep l_{\vep,a}) (1+a \vep l_{\vep,a})^{\frac{1-2a}{a}}\right).
\]

\smallskip
Consider an open interval $I \subset \BR$ containing $0$ and $N_0 \in \BN_{>0}$ such that $(\vep, -1/N) \in U$ for all $\vep \in I$ and $N \in (N_0, +\infty) \cap \BN$. 
Denote by $\ld = 1/\vep$ for $\vep \in I \setminus \{0\}$ and set 
\[
    \om_{\ld,N} := \om_{\vep,-1/N}
    \quad\text{and}\quad
    l_{\ld,N} := \frac{l_{\vep,-1/N}}{\ld}.
\] 
Then the metric $\om_{\ld,N}$ can be regarded as a $(v_{\ld,N}, \widetilde{w}_{\ld,N})$-cscK metrics for 
\begin{align*}
    v_{\ld,N} &:= \left(1- \frac{l_{\ld,N}}{N}\right)^{-(N+1)}
\end{align*}
and
\begin{align*}
    \widetilde{w}_{\ld,N} 
    &:= -\ld N \left(\left(1 - \frac{l_{\ld,N}}{N}\right)^{-(N+1)} - \left(1 - \frac{l_{\ld,N}}{N}\right)^{-(N+2)}\right) + d_{\ld,-1/N}(l_{\ld,N}) \left(1 - \frac{l_{\ld,N}}{N}\right)^{-(N+2)}\\
    &= \left(1 - \frac{l_{\ld,N}}{N}\right)^{-(N+2)} 
    \left(d_{\ld,-1/N}(l_{\ld,N}) + \ld l_{\ld,N}\right).
\end{align*}

\smallskip
Let $Z$ be an $(N-n)$-dimensional compact K\"ahler manifold and let $\om_Z$ be a cscK metric on $Z$ such that $\Scal(\om_Z) = \ld N$. 
The product metric of $\om_{\ld,N}$ and $\om_Z$ is then a $(v_{\ld,N}, w_{\ld,N})$-cscK metric on $X \times Z$, for 
\begin{equation}\label{eq:toSFCweight}
    w_{\ld,N} 
    = \widetilde{w}_{\ld,N} + v_{\ld,N}  \Scal(\om_Z) 
    = \left(1- \frac{l_{\ld,N}}{N}\right)^{-(N+2)} \left(d_{\ld,-1/N}(l_{\ld,N}) + \ld N\right).
\end{equation}

\smallskip
It remains to verify that $(d_{\ld,-1/N}(l_{\ld,N}) + \ld N)$ in \eqref{eq:toSFCweight} is positive (resp. negative) for all sufficiently large $N$ and for $\ld$ sufficiently positive (resp. negative). 
The arguments for the positive and negative cases are analogous; therefore, we present only the proof of the positive case below. 
By \eqref{eq:d_defn}, we get
\[
    d_{\ld,-1/N}(l_{\ld,N}) + \ld N
    = \frac{\int_X \Scal_{F_1(-\frac{1}{N}, l_{\ld,N})}(\om) \om^{[n]}}{\int_X F_2(-\frac{1}{N}, l_{\ld,N}) \om^{[n]}} 
    + \ld N \frac{\int_X F_1(-\frac{1}{N}, l_{\ld,N}) \om^{[n]}}{\int_X F_2(-\frac{1}{N}, l_{\ld,N}) \om^{[n]}}.
\]
As $l_{\ld,N} \to 0$ when $\ld, N \to +\infty$, we have the following limiting properties 
\[
    \frac{\int_X \Scal_{F_1(-\frac{1}{N}, l_{\ld,N})}(\om) \om^{[n]}}{\int_X F_2(-\frac{1}{N}, l_{\ld,N}) \om^{[n]}} \xrightarrow[\ld,N \to +\infty]{} \frac{\int_X \Scal(\om) \om^{[n]}}{\BFV_0(0)} = \bar{s}
\]
and
\[
    \frac{\int_X F_1(-\frac{1}{N}, l_{\ld,N}) \om^{[n]}}{\int_X F_2(-\frac{1}{N}, l_{\ld,N}) \om^{[n]}}
    \xrightarrow[\ld, N \to +\infty]{} 1.
\]
Therefore, as $\ld, N$ are sufficiently large, the quantity $(d_{\ld,-1/N}(l_{\ld,N}) + \ld N)$ is positive. 
By Corollary~\ref{c:tool}, this induces a Sasaki structure of positive constant transversal scalar curvature. 
\end{proof}

Combining all the material mentioned before, we easily derive the proofs for Theorems \ref{thm:main} and \ref{thm:Sasaki} in the introduction. 

\begin{proof}[Proof of Theorem~\ref{thm:main}]
Consider an integer $N > N_0$ sufficiently large such that 
\[
    \ld_N := \frac{2(N-n)(N-n+1)}{N} > \ld_+.
\] 
Take $k := N-n$ and $Z := \BP^k$. 
Let $\om_Z$ be the Fubini--Study metric in $2\pi c_1(\CO_{\BP^{k}}(1))$. 
We have 
\[
    \Scal(\om_Z) = 2k(k+1) = 2(N-n)(N-n+1) = \ld_N  N.
\]
This fulfills the condition in Theorem~\ref{thm:SFC} and hence  the cone 
\[
    ((\pi_X^* L \otimes \pi_{\BP^k}^* \CO_{\BP^{k}}(1))^{-1})^\times
\] 
over $X \times \BP^k$ admits a scalar-flat K\"ahler cone metric. 
\end{proof}

\begin{proof}[Proof of Theorem~\ref{thm:Sasaki}]
Let $(\Sa, \hat\chi, \Ds, J)$ be a quasi-regular compact $(2n+1)$-dimensional Sasaki manifold such that the transversal K\"ahler geometry is Calabi-extremal.  In this case, 
$X := \Sa/\Sph^1_{\hat\chi}$ is a polarized 
K\"ahler orbifold endowed with an extremal K\"ahler metric $\om_X \in 2\pi c_1(L_X)$ induced by the transversal K\"ahler structure of $\Sa$. 
The Fubini--Study metric of scalar curvature $2k(k+1)$ on $\BP^k$ naturally gives raise to the flat K\"ahler cone $\CO_{\BP^k}(-1)^\times = \C^{k+1}\setminus \{0\}$ and the regular Sasaki sphere $(\Sph^{2k+1}, \hat \chi_k, \Ds_k, J_k)$. 
Following the Sasaki join construction discussed in Section~\ref{subsec:Sasaki_joint}, $\Sa \star_{1,1} \BS^{2k+1}$ (with the Sasaki--Reeb vector field $\hat{\xi}_k = \frac{1}{2}(\hat\chi + \hat\chi_k)$) is associated to the cone 
\[
    \CC_k = \left((\pi_X^* L_X \otimes \pi_{\BP^k}^* \CO_{\BP^k}(1))^{-1}\right)^{\times}.
\] 
By Theorem~\ref{thm:main}, there exists $k_0 > 0$ such that $\CC_k$ admits a scalar-flat K\"ahler cone metric for all $k > k_0$. 
Combining this with Corollary~\ref{c:tool}, one can derive that $\Sa \star_{1,1} \BS^{2k+1}$  admits a Sasaki--Reeb vector field $\hat\xi$ and a compatible Sasaki structure of constant positive transversal scalar curvature. 
\end{proof}

Theorem~\ref{thm:SFC} can also produce Sasaki structures with negative constant positive transversal scalar curvature. 

\begin{cor}\label{c:negative cscS}
Let $(\Sa, \hat\chi, \Ds, J)$ be a $(2n+1)$-dimensional smooth compact quasi-regular Sasaki manifold with Calabi-extremal transversal K\"ahler structure, and let $(X, L_X)$ be the induced  $n$-dimensional polarized orbifold $\Sa/\Sph^1_{\hat \chi}$ endowed with an extremal K\"ahler metric $\omega_X \in 2 \pi c_1(L_X)$ as in the hypotheses  of Theorem~\ref{thm:SFC}. 
Then, for any given integer $N \in (N_0, +\infty) \cap \BN_{>0}$, there exist $g_N \gg 0$ such that for any compact Riemann surface $\Sm_g$ of genus $g \geq g_N$ and constant scalar curvature $4(1-g)$, the regular $(2(N-n)+1)$-dimensional Sasaki manifold $\Sa_{N, g}$ obtained as the unitary bundle of $\left(\bigotimes_{j=1}^{N-n}\pi_j^* K_{\Sm_g}^{\frac{1}{2-2g}}\right) \to (\Sm_g)^{N-n}$ with respect to the hermitian metric with curvature the $(N-n)$-th product K\"ahler metric on $(\Sm_g)^{N-n}$,   satisfies  that the Sasaki join
\[ \tilde \Sa =\Sa \star_{1,1} \Sa_{N, g} \]
admits a Sasaki--Reeb vector field $\hat \xi$ and a  Sasaki structure of negative constants transversal scalar curvature in $\Xi^{\hat \T}_{\hat \xi, \eta_0^{\hat \xi}, J^{\hat \xi}}(\tilde \Sa)$.
\end{cor}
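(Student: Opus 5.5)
The plan is to apply Theorem~\ref{thm:SFC} in its negative regime, taking $Z := (\Sm_g)^{N-n}$ with $L_Z := \bigotimes_{j=1}^{N-n}\pi_j^* K_{\Sm_g}^{\frac{1}{2-2g}}$. Fix $N \in (N_0,+\infty)\cap\BN$, with $N_0$ and $\ld_-<0$ given by Theorem~\ref{thm:SFC}. On each factor $\Sm_g$, take the constant curvature metric $\om_{\Sm_g}\in 2\pi c_1(K_{\Sm_g}^{1/(2-2g)})$. Since $\deg K_{\Sm_g}=2g-2$, the orbifold line bundle $K_{\Sm_g}^{1/(2-2g)}$ has degree $-1$. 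That is not positive, so the sign convention must be checked. The statement presumably intends the root of $K^{-1}$ of degree one, equivalently $K_{\Sm_g}^{1/(2g-2)}$, whose associated volume $2\pi$ matches the normalization $\Scal = 4(1-g)$. By Gauss--Bonnet, $\int \Scal\, \om^{[1]} = 4\pi\chi(\Sm_g) = 8\pi(1-g)$, and the volume is $2\pi$. Hence the cscK metric in this class has $\Scal(\om_{\Sm_g}) = 4(1-g)$, as stated.

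The product metric $\om_Z$ on $(\Sm_g)^{N-n}$ then lies in $2\pi c_1(L_Z)$ and is cscK with
\[
\Scal(\om_Z) = 4(N-n)(1-g).
\]
Set $\ld_{N,g} := 4(N-n)(1-g)/N$, so that $\Scal(\om_Z)=\ld_{N,g}N$, which is exactly the hypothesis of Theorem~\ref{thm:SFC}. For fixed $N$, $\ld_{N,g}\to-\infty$ as $g\to\infty$. We can therefore choose $g_N$ so that $\ld_{N,g}<\ld_-$ for all $g\ge g_N$.

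Theorem~\ref{thm:SFC} then shows that the unitary bundle over $X\times Z$ of $(\pi_X^*L_X\otimes\pi_Z^*L_Z)^{-1}$, taken with respect to the metric of curvature $\om_X\oplus\om_Z$, carries a Sasaki structure of constant negative transversal scalar curvature. The remaining work is identification. By the join construction of Section~\ref{subsec:Sasaki_joint} with weight $(1,1)$, this unitary bundle is $\Sa\star_{1,1}\Sa_{N,g}$, where $\Sa_{N,g}$ is the regular Sasaki manifold over $(Z,L_Z)$.

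Next, the constant-scalar-curvature structure produced in the proof of Theorem~\ref{thm:SFC} must be placed in the right space. It comes, via Proposition~\ref{p:cscS-weighted} and Corollary~\ref{c:tool}, from a $(v,w)$-cscK metric on $X\times Z$ in $2\pi c_1(\pi_X^*L_X\otimes\pi_Z^*L_Z)$ with weight $\ell = 1 - l_{\ld,N}/N$. Through the identifications \eqref{cone-identification} and Proposition~\ref{sasaki-identification}, it is therefore realized as $(\hat\xi,\eta^{\hat\xi}_\varphi,\Ds_\varphi,J_\varphi)$ for some $\varphi\in\Xi^{\hat\T}_{\hat\xi,\eta_0^{\hat\xi},J^{\hat\xi}}(\tilde\Sa)$, where $\hat\xi$ corresponds to $\ell$ under \eqref{polytopes}.

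A subtlety is that Corollary~\ref{c:tool} is stated for a maximal torus $\hat\T$. Since $Z$ has no continuous automorphisms when $g\ge2$, the torus on $X\times Z$ is just the one lifted from $X$ together with the fibre circle, so nothing changes.

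The main obstacle is only bookkeeping: confirming that the line-bundle normalization yields volume $2\pi$ on each factor, so that the curvature value $4(1-g)$ is consistent, and that the join weights are $(1,1)$. The analytic content is entirely contained in Theorem~\ref{thm:SFC}.
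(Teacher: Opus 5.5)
Your proposal is correct and follows the paper's proof essentially step for step. You take $Z=(\Sigma_g)^{N-n}$ with the product cscK metric of scalar curvature $4(N-n)(1-g)=\lambda_{N,g}N$, choose $g_N$ so that $\lambda_{N,g}<\lambda_-$, and apply Theorem~\ref{thm:SFC} in its negative regime; your correction of the polarization to the degree-one root $K_{\Sigma_g}^{\frac{1}{2g-2}}$ matches what the paper's proof actually uses, and your Gauss--Bonnet normalization check and identification of the unitary bundle with $\Sa\star_{1,1}\Sa_{N,g}$ spell out what the paper leaves implicit.
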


\begin{proof} 
Let $(X, L_X, \omega_X)$ be the polarized extremal K\"ahler orbifold quotient of $(\Sa, \hat \chi, \Ds, J)$ by $\Sph_{\hat \chi}^1$,  and $\Sm_g$ be a compact complex curve of genus $g > 1$ and  $\af$ the positive integral class generating $H^2(\Sm_g, \BZ)$.
We fix the K\"ahler metric $\om_g \in 2\pi \af$ with constant scalar curvature $4(1- g)$. 
Let $L_\af$ be a $(2g-2)$-root of $K_{\Sm_g}$ which we denote by $K_{\Sm_g}^{\frac{1}{2g-2}}$. 

\smallskip
Given an integer $N \in (N_0,+\infty)$, the product K\"ahler metric $\om_{Z} = \om_g \oplus \cdots \oplus \om_g$ on $Z := (\Sigma_g)^{N-n}$ has scalar curvature $4(N-n)(1-g)$. 
Then for $g \gg 1$, we have
\[
    \ld_{N,g} := \frac{(N-n)(4-4g)}{N} < \ld_-.
\]
The polarization $L_Z := \bigotimes_{i=1}^{N-n} \pi_i^\ast L_\af = \bigotimes_{i=1}^{N-n} \pi_i^\ast K_{\Sm_g}^{\frac{1}{2g-2}}$ of $Z$ with the constant scalar curvature K\"ahler metric $\omega_Z \in 2\pi c_1(L_Z)$ then fulfill the conditions in Theorem~\ref{thm:SFC}. 
Thus, the unitary bundle in $(\pi_X^*L_X  \otimes \pi_Z^*L_Z)^{-1} \to X\times Z$ admits a Sasaki structure of constant negative transversal scalar curvature. 
\end{proof}

\bibliographystyle{alpha}
\bibliography{biblio}
\end{document}